\newtheorem{theorem}{Theorem}[section]
\newtheorem{lemma}[theorem]{Lemma}
\newtheorem{corollary}[theorem]{Corollary}
\newtheorem{proposition}[theorem]{Proposition}
\newtheorem{conjecture}[theorem]{Conjecture}
\theoremstyle{definition}
\theoremstyle{remark}
\numberwithin{equation}{section}
\def \l {\lambda}
\def\Tr{{\rm Tr}}
\def\F{{\mathbb F}}
  \def\Fq{{\F_{q}}}
\def\Z{{\mathbb Z}}
\def\Q{{\mathbb Q}}
\newcommand*\mathinhead[2]{\texorpdfstring{$\boldsymbol{#1}$}{#2}}
\def\Im{\mathrm{Im}\,}
\def\SL{\mathrm{SL}}
\def\N{\mathbb N}
\def\({\left(}
\def\){\right)}
\def \l {\lambda}
\def\p{\varphi}
\def\Z{\mathbb{Z}}
\def\Q{\mathbb{Q}}
\def\F{\mathbb{F}}
\def \eps{\varepsilon}
\newcommand{\fp}
{\mathbb{F}_p}
\newcommand{\fq}
{\mathbb{F}_q}
\newcommand{\fphat}
{\widehat{\mathbb{F}_{p}^{\times}}}
\newcommand{\fqhat}
{\widehat{\mathbb{F}_{q}^{\times}}}
\newcommand*\HYPERskip{&}
\newcommand*\pFq{
\begingroup
\catcode`\,\active
\def ,{\HYPERskip}%
\doHyper
}
\def\doHyper#1#2#3#4#5{%
\, _{#1}F_{#2}\left[\begin{matrix}#3 \smallskip \\  #4\end{matrix} \; ; \; #5\right]%
\endgroup
}
\begin{document}

\title{Hypergeometric Moments and Hecke Trace Formulas}

\author{Brian Grove}
\address{Mathematics Department\\
  Louisiana State University\\
  Baton Rouge, Louisiana}
\email{briangrove30@gmail.com}

\date{}
\subjclass[2020]{11F11, 11F46, 11T24, 11G20, 33C80}

\begin{abstract}
Moments for hypergeometric functions over finite fields were studied in the work of Ono, Pujahari, Saad, and Saikia for several $_{2}F_{1}$ and $_{3}F_{2}$ cases. We generalize their work to prove results for new cases where the hypergeometric data is defined over $\Q$ and primitive. These new moments are established using Hecke trace formulas of hypergeometric origin recently established by Hoffman, Li, Long, and Tu. We also obtain several algebraic formulas in the finite field setting and present conjectures for additional $_{2}F_{1}$ and $_{3}F_{2}$ moments.
\end{abstract}

\maketitle

\section{Introduction}

\subsection{Hypergeometric Background} Greene \cite{green1,greene} first defined hypergeometric functions over finite fields with Jacobi sums. Initially, this subject focused on establishing transformation and evaluation formulas for Greene's hypergeometric functions over finite fields. Over time the subject has grown immensely with connections to point--counting on algebraic varieties \cite{aop,bcm,long,matrix}, modular forms \cite{a,ao1,hs,tr1,jf,wom}, elliptic curves \cite{k,lennon,mc1,hg,tm,sv,oss,rec}, Galois representations \cite{HMM1,long,whipple}, supercongruences \cite{ao1,nsc,sc1,mor}, graph theory \cite{bgt,pg,pg3}, Sato--Tate distributions \cite{mst,prodst,matrix,oss}, etc.

To define hypergeometric functions over finite fields we begin with hypergeometric data. A multiset $\left\{\alpha, \beta\right\}$, where $\alpha = \{a_{1},\ldots, a_{n}\}$ and $\beta = \{1,b_{2}, \ldots, b_{n}\}$ with $a_{i},b_{i} \in \Q$, is called a \textit{hypergeometric datum}. Furthermore, let $M = \text{lcd}(\alpha \cup \beta)$ be the least positive common denominator of the hypergeometric datum. We now require the following setup over finite fields. Let $p$ be an odd prime, $\zeta_{p}$ a fixed primitive $p$th root of unity, and $q = p^{e}$. Consider the group of multiplicative characters  $\fqhat = \langle \omega \rangle$ on the finite field $\fq$ \footnote{We use the conventions that $\chi(0) = 0$ and $\overline{\chi}$ is the inverse of $\chi$ for any multiplicative character $\chi \in \fqhat$.}. 

 There are many definitions of finite field hypergeometric functions in the literature. We use a version defined by McCarthy \cite{mc1}. Let $\l \in \Q^{\times}$ such that $q \equiv  1 \, (\textrm{mod} \, M)$ and $p \nmid M$. The finite hypergeometric function over $\fq$ is defined as 

\begin{equation}\label{eq:Hqdef}
H_{q}(\alpha, \beta; \l):= \frac{1}{1-q} \sum_{k=0}^{q-2} \prod_{i=1}^{n} \frac{g(\omega^{k+(q-1)a_{i}})g(\omega^{-k-(q-1)b_{i}})}{g(\omega^{(q-1)a_{i}})g(\omega^{-(q-1)b_{i}})} \omega^{k}((-1)^{n}\l), 
\end{equation}
where $$ g(\chi):= \sum_{x \in \fq^{\times}} \chi(x) \zeta_{p}^{\Tr(x)}$$ is the Gauss sum corresponding to the character $\chi$ in $\fqhat$ and $\Tr(x) := x + x^p + \cdots + x^{p^{e-1}}.$ The convention that $H_{q}(\alpha,\beta;0) = 1$ is taken throughout.

An advantage of using the $H_{q}$ function is that the condition of $q \equiv  1 \, (\textrm{mod} \, M)$ can be removed when the hypergeometric datum is defined over $\Q$ \cite{bcm}. A multiset $\alpha = \{a_{1}, \ldots, a_{n}\}$ is defined over $\Q$ if 
$$ \prod_{j=1}^{n} (X - e^{2 \pi i a_{j}}) \in \Z[X].$$

The hypergeometric datum $\{\alpha, \beta\}$ is then defined over $\Q$ if the multisets $\alpha$, $\beta$ are both defined over $\Q$ and $\l \in \Q^{\times}$. Another useful property of hypergeometric data $\{ \alpha, \beta \}$ is to be primitive which means that $a_{i} - b_{j} \notin \Z$ for all $i,j \in [1,n]$. All hypergeometric data considered in this paper is both defined over $\Q$ and primitive. A well-studied example is the datum $\{\{\frac{1}{2}, \frac{1}{2}\},\{1,1\}\}$. Define 

$$H_{q}(\l):= H_{q}\left[\begin{matrix} \frac{1}{2} & \frac{1}{2} \smallskip \\  &1\end{matrix} \; ; \; \l \right] = \frac{1}{1-q} \sum_{\chi \in \fqhat} \frac{g(\phi \chi)^{2}g(\overline{\chi})^{2}}{g(\phi)^{2}} \chi(\l),$$

where $\varepsilon$ and $\phi$ are the trivial and quadratic characters, respectively. Observe that $H_{q}(\l) = -q \cdot \, _{2}F_{1}(\l)_{q}$, for $\l \in \fq^{\times}$, where $_{2}F_{1}(\l)_{q}$ is Greene's hypergeometric function over finite fields for the hypergeometric datum $\{ \{\frac{1}{2}, \frac{1}{2}\}, \{1,1\} \}$. The $H_{p}(\l)$ function and its various normalizations have been studied throughout the literature \cite{ao1,aop,bcm,tr1,k,mc1,sv,wom,rec} and is particularly useful when studying point-counting on algebraic varieties. For example, Koike \cite{k} showed that $H_{q}(\l) = \phi(-1) a_{\l}^{\text{Leg}}(q)$ for all $\l \in \fq \setminus \{0,1\}$, where
$$a_{\l}^{\text{Leg}}(q):= q + 1 - |E_{\l}^{\text{Leg}}(\Fq)|$$
is the trace of Frobenius for the Legendre elliptic curve 
$$E_{\l}^{\text{Leg}}: y^{2} = x(x-1)(x-\l).$$
In \cite{bcm}, Beukers, Cohen, and Mellit generalize the point-counting connection of Koike to hypergeometric data defined over $\Q$ by connecting the $H_{q}$ function for data defined over $\Q$ to point-counting on a certain non-singular completion of an explicit toric variety.

\subsection{Hypergeometric Moments}\label{ss:HGM}

The key connection of Koike allows us to consider the Sato-Tate distribution of the normalized values $H_{q}(\l)/\sqrt{q} \in [-2,2]$ for a fixed $\l \in \fq \setminus \{0,1\}$. In \cite{oss}, Ono, Saad, and Saikia consider all elliptic curves in the Legendre family and determine the distribution of normalized values $H_{q}(\l)/\sqrt{q} \in [-2,2]$, where now both $q$ and $\l \in \fq \setminus \{0,1\}$ vary. They find the distribution is semicircular, explicitly determined in Corollary 1.2 of \cite{oss}, analogous to the classical Sato-Tate conjecture.

Let $m$ be a fixed positive integer. Computing the mean of the $(H_{q}(\l)/\sqrt{q})^{m}$ values over $\fq$ and taking the limit as $p$ tends to infinity gives the first example of a hypergeometric moment. Ono, Saad, and Saikia \cite{oss} determine the values of this limit to be 

\begin{equation}\label{eq:OSS}
\lim_{p \to \infty} \frac{1}{p^{e(\frac{m}{2}+1)}} \sum_{\l \in \mathbb{F}_{p^e}} H_{p^e}(\l)^{m} = \begin{cases}
0 &\quad \textit{if $m$ is odd}\\
C(n) &\quad \textit{if $m$ = 2$n$ is even,}
\end{cases}
\end{equation}
where $m$ and $e$ are fixed positive integers and $C(n) = \frac{(2n)!}{n!(n+1)!}$ denotes the $n$th Catalan number. 

Our first two results extend \eqref{eq:OSS} to the hypergeometric datum $\{\{\frac{1}{3}, \frac{2}{3}\}, \{1,1\} \}$ and determine the even moments for the case $\{ \{\frac{1}{4}, \frac{3}{4}\}, \{1,1\} \}$. We restrict to primes $p$, for simplicity, but the properties of $L$-functions attached to hypergeometric Galois representations \cite{Katzexp} can be used to explore moments for prime powers $q = p^e$. Our first result is as follows.
\begin{theorem}\label{thm:main3}
If $m$ is a fixed positive integer then 
$$\lim_{p \to \infty} \frac{1}{p^{(\frac{m}{2}+1)}} \sum_{\l \in \fp} H_{p}\left[\begin{matrix} \frac{1}{3} & \frac{2}{3} \smallskip \\  &1\end{matrix} \; ; \; \l \right]^{m}= \begin{cases}
0 &\quad \text{if m  is odd}\\
C(n) &\quad \text{if m = 2n is even.}
\end{cases}$$
\end{theorem}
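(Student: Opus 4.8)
The plan is to relate the hypergeometric function $H_p[\frac13,\frac23;1;\lambda]$ to a trace of Frobenius attached to a family of algebraic varieties, reduce the moment computation to counting points on a fiber product of these varieties, and then apply the Hecke trace formula of Hoffman--Li--Long--Tu to evaluate the resulting sum. Concretely, by the work of Beukers--Cohen--Mellit cited in the excerpt, $H_p[\frac13,\frac23;1;\lambda]$ equals (up to an explicit sign/character factor) the trace of Frobenius $a_\lambda(p)$ on a curve in the family associated to this hypergeometric datum; this datum is the one of ``Hesse'' type, so $H_p[\frac13,\frac23;1;\lambda]$ should be, up to normalization, the trace of Frobenius on an elliptic curve $E_\lambda$ (e.g. a member of the Hesse pencil $x^3+y^3+z^3 = \mu xyz$), and in particular $|H_p[\frac13,\frac23;1;\lambda]| \le 2\sqrt{p}$ for $\lambda \neq 0,1$, so the normalized values $H_p[\frac13,\frac23;1;\lambda]/\sqrt p$ lie in $[-2,2]$.

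Granting that identification, the $m$-th moment
$$\frac{1}{p^{m/2+1}} \sum_{\lambda \in \fp} H_p\left[\begin{matrix} \frac13 & \frac23 \smallskip \\ & 1 \end{matrix}\,;\,\lambda\right]^m$$
becomes, after discarding the $O(1/\sqrt p)$ contribution of $\lambda \in \{0,1\}$, essentially $p^{-m/2-1}\sum_\lambda a_\lambda(p)^m$. Expanding $a_\lambda(p)^m = (p+1-\#E_\lambda(\fp))^m$ and using $a_\lambda(p)^m = \sum$ of products of point counts, one sees that $\sum_\lambda a_\lambda(p)^m$ is a $\Z$-linear combination of point counts on $m$-fold fiber products $E_\lambda^{\times k} \times_\lambda \cdots$ summed over $\lambda$; equivalently one packages this as $\sum_\lambda \operatorname{Tr}(\Frob_p \mid \operatorname{Sym}^? \text{ or } \otimes^m H^1(E_\lambda))$. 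The point of passing through the Hecke trace formula of Hoffman--Li--Long--Tu is that the generating function $\sum_\lambda a_\lambda(p)^m$ (or its relevant pieces) can be expressed in terms of traces of Hecke operators on spaces of cusp forms for congruence subgroups, whose dimensions are known; as $p \to \infty$ all the cusp form contributions are $o(p^{m/2+1})$ by the Ramanujan bound, and only the ``Eisenstein'' or trivial-representation part survives, contributing exactly the Catalan number $C(n)$ when $m = 2n$ and $0$ when $m$ is odd — matching the classical computation of Sato--Tate moments of $\operatorname{Sym}^1$ against the semicircle measure, whose even moments are the Catalan numbers.

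More precisely, I would organize the argument as follows. First, establish the exact relation $H_p[\frac13,\frac23;1;\lambda] = c_\lambda \cdot a_{\psi(\lambda)}(p)$ for an explicit elliptic curve $E_{\psi(\lambda)}$ and a sign/character $c_\lambda$, valid for $\lambda \neq 0,1$, using \cite{bcm} and known evaluations for this datum; handle $\lambda = 0, 1$ separately (they contribute a bounded amount, hence $0$ in the limit after dividing by $p^{m/2+1}$ with $m \ge 1$). Second, for odd $m$, use a symmetry (the functional equation / the fact that twisting by the quadratic character negates $a_\lambda$, or parity of the $\lambda$-sum) to show the moment sum is $o(p^{m/2+1})$, giving $0$. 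Third, for $m = 2n$, write $\sum_\lambda a_\lambda(p)^{2n}$ via the Eichler--Selberg / Hoffman--Li--Long--Tu trace formula as a main term plus traces of Hecke operators $T_p$ on cusp spaces $S_k(\Gamma)$ for the relevant $\Gamma$ and weights $k \le 2n$; the main term evaluates to $C(n)\, p^{n+1} + O(p^n)$ and each Hecke trace is $O(p^{(k-1)/2} \cdot \dim S_k) = O(p^{n - 1/2})$ after accounting for the $p$-power normalizations, hence negligible. Dividing by $p^{n+1}$ and letting $p \to \infty$ yields $C(n)$.

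The main obstacle I anticipate is the second and third steps made precise: correctly bookkeeping the normalization powers of $p$ that enter when one expands $(p + 1 - \#E_\lambda(\fp))^{2n}$, sums over $\lambda \in \fp$, and interprets the result through the trace formula, so that the ``diagonal'' combinatorics producing the Catalan number is cleanly separated from the genuinely automorphic error terms; in particular one must verify that no intermediate-weight cusp form contribution is secretly of size $p^{n+1}$, which requires the sharp form of the Hoffman--Li--Long--Tu formula (identifying exactly which $\Gamma$ and which weights occur) together with the Ramanujan--Petersson bound. A secondary technical point is confirming the precise hypergeometric-to-elliptic-curve dictionary for the datum $\{\{\frac13,\frac23\},\{1,1\}\}$ over all $\lambda$, including the behavior at the singular fibers, so that the count of exceptional $\lambda$ is genuinely $O(1)$ and does not perturb the limit.
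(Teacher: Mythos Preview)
Your high-level strategy---use the Hoffman--Li--Long--Tu trace formula together with Deligne's bound so that only a combinatorial main term survives---is exactly the paper's strategy, but your proposed execution diverges from the paper in two substantive ways.

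First, the detour through an explicit elliptic-curve identification $H_p[\frac13,\frac23;1;\lambda]=c_\lambda\,a_{\psi(\lambda)}(p)$ is unnecessary and is not taken in the paper. The HLLT formula (Theorem~\ref{thm:level3trace}) is already written directly in terms of powers of $H_p[\frac13,\frac23;1;\lambda]$: it expresses $-\Tr_{k+2}(\Gamma_1(3),p)$ as a bounded term $c(k)$ plus $\sum_{\lambda}\sum_{j}(-1)^j\binom{k-j}{j}p^{j}H_p[\cdot;\lambda]^{k-2j}$. So the paper never converts to point counts or expands $(p+1-\#E_\lambda)^m$; it simply peels off the $j=0$ term to isolate $\sum_\lambda H_p[\cdot;\lambda]^{k}$, leaving a linear combination of \emph{lower} moments on the right. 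This turns the problem into a recursion on the exponent, and the Catalan number falls out of the induction step via Chu's identity (Lemma~\ref{lem:chu}), $\sum_{j=0}^{n}(-1)^j\binom{2n-j}{j}C(n-j)=0$ for $n\ge 1$, rather than from any ``main term'' bookkeeping in a power expansion. What you flag as your main obstacle---cleanly separating the diagonal combinatorics from the automorphic error---is exactly what this recursion-plus-Chu mechanism sidesteps.

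Second, for odd $m$ you propose an unspecified symmetry (a quadratic twist negating $a_\lambda$, or a parity of the $\lambda$-sum). No such symmetry is invoked in the paper, and it is not clear one is available for this family in a form that kills the odd moments uniformly. Instead the paper runs the \emph{same} recursive argument: set $k=2r+1$ in the HLLT formula, use that $\Tr_{2r+3}(\Gamma_1(3),p)=O(p^{r+1+\varepsilon})$ by Deligne and that $c(2r+1)=O(p^{(2r+1)/2})$, and induct from the base case $\sum_\lambda H_p[\cdot;\lambda]=O(1)$ (orthogonality of characters). The point is that HLLT covers \emph{all} weights on $\Gamma_1(3)$, odd included, so odd and even moments are handled by one mechanism.

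In short: right framework, but the paper's route is shorter and avoids both of your anticipated technical difficulties by reading the trace formula as a recursion in $H_p$-moments and closing it with Chu's lemma.
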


The result in \Cref{thm:main3} was recently established by Ono, Pujahari, Saad, and Saikia \cite{Hess} for primes $p \equiv 1 \pmod{3}$ using an Eicher--Selberg trace formula. We generalize their result to primes $p \equiv 2 \pmod{3}$ in \Cref{thm:main3} using new hypergeometric trace formulas developed by Hoffman, Li, Long, and Tu. See \Cref{sec:traces} for more details on these new trace formulas. Further, our trace formula technique leads to the following new case.

\begin{theorem}\label{thm:main4} If $n$ is a fixed nonnegative integer then
$$\lim_{p \to \infty} \frac{1}{p^{n+1}} \sum_{\l \in \fp} H_{p}\left[\begin{matrix} \frac{1}{4} & \frac{3}{4} \smallskip \\  &1\end{matrix} \; ; \; \l \right]^{2n} = C(n).$$
Further, 
$$\sum_{\l \in \fp} H_{p}\left[\begin{matrix} \frac{1}{4} & \frac{3}{4} \smallskip \\  &1\end{matrix} \; ; \; \l \right]^{2n+1} = 0$$ for primes $p \equiv 5 \pmod{8}$ and $p \equiv 7 \pmod{8}$.
\end{theorem}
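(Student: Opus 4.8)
The plan is to establish the two halves of \Cref{thm:main4} by separate arguments. Throughout, abbreviate $F_p(\l):=H_p\!\left[\begin{smallmatrix}1/4 & 3/4 \\ & 1\end{smallmatrix}\,;\l\right]$. Since the datum $\{\{1/4,3/4\},\{1,1\}\}$ is defined over $\Q$, primitive, and self-dual, the work of \cite{bcm} realizes $F_p(\l)$ as a character twist of the trace of Frobenius of an explicit elliptic curve $E_\l/\fp$ --- a family which, via the finite-field form of the classical quadratic transformation relating ${}_2F_1(\tfrac14,\tfrac34;1;\cdot)$ to the Legendre function ${}_2F_1(\tfrac12,\tfrac12;1;\cdot)$, is a quadratic pullback of the Legendre family. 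In particular $F_p(\l)/\sqrt p\in[-2,2]$, which is exactly why $p^{n+1}$ is the right normalization for the $(2n)$th moment.

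For the even moments, apply the hypergeometric Hecke trace formula of Hoffman--Li--Long--Tu from \Cref{sec:traces}: it rewrites $\sum_{\l\in\fp}F_p(\l)^{2n}$ as an explicit main term $M_{2n}(p)$ plus a fixed $\Z$-linear combination of Hecke traces $\Tr(T_p\mid S_k(\Gamma))$ on cusp-form spaces of weights $k\le 2n+2$ and a single $p$-independent level. The point of this route --- as opposed to the Eichler--Selberg argument of \cite{oss,Hess} --- is that the identity holds for \emph{every} odd prime $p$, with no requirement that $M=4$ divide $p-1$; in particular it covers $p\equiv 3\pmod 4$. By Deligne's bound $|a_p(f)|\le 2p^{(k-1)/2}$ and the boundedness of $\dim S_k(\Gamma)$ for fixed level and weight, every such Hecke trace is $O(p^{(2n+1)/2})=o(p^{n+1})$, so $\lim_{p\to\infty}p^{-(n+1)}\sum_{\l\in\fp}F_p(\l)^{2n}=\lim_{p\to\infty}p^{-(n+1)}M_{2n}(p)$, and one is left to compute the leading coefficient of $M_{2n}(p)$. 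That coefficient records the Tate (algebraic-class) contribution to the point count on the $2n$-fold fibre power of the universal curve over the base, and a direct combinatorial count identifies it with the Catalan number $C(n)$ --- consistent with $C(n)$ being the $(2n)$th moment of the semicircular law on $[-2,2]$ that governs $F_p(\l)/\sqrt p$.

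For the odd moments, the idea is to produce an involution $\sigma$ of $\fp$ and a transformation formula $F_p(\sigma(\l))=\phi(-2)\,F_p(\l)$ whose proportionality constant is the \emph{$\l$-independent} sign $\phi(-2)$: concretely this should encode a quadratic twist by $-2$ between the $\l$- and $\sigma(\l)$-members of the family $\{E_\l\}$, equivalently a functional equation for $H_p$ obtained from \eqref{eq:Hqdef} by applying the Hasse--Davenport product relation to the Gauss-sum quotients, where the factors $g(\phi)$ and the doubling terms supply the sign; it should also be readable off the self-dual case of \cite{bcm,mc1}. One must also check that the degenerate values $F_p(0)=1$ and $F_p(1)$ at the points interchanged or fixed by $\sigma$ are compatible with the formula, so that their contributions cancel when $\phi(-2)=-1$. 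The supplementary laws for $\left(\tfrac{-1}{p}\right)$ and $\left(\tfrac{2}{p}\right)$ give $\phi(-2)=-1$ precisely when $p\equiv 5,7\pmod 8$, and for those $p$ re-indexing the sum by $\sigma$ yields
\[\sum_{\l\in\fp}F_p(\l)^{2n+1}=\sum_{\l\in\fp}F_p(\sigma(\l))^{2n+1}=\phi(-2)^{2n+1}\sum_{\l\in\fp}F_p(\l)^{2n+1}=-\sum_{\l\in\fp}F_p(\l)^{2n+1},\]
so the sum is $0$.

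The main obstacle I expect is the evaluation of the leading coefficient of $M_{2n}(p)$ uniformly over the residue class of $p$ modulo $4$: the precise shape of the Hoffman--Li--Long--Tu main term depends on which quadratic and quartic characters are available over $\fp$, and one must verify that in both cases the degenerate, Eisenstein, and oldform contributions cancel or reassemble into $C(n)$ rather than a class-dependent quantity. The odd-moment half is comparatively soft once the transformation formula is secured; the only real work there is to pin down the correct involution $\sigma$ and to confirm that its multiplier is genuinely the constant $\phi(-2)$ and not an $\l$-dependent character.
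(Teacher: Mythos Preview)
Your plan matches the paper's approach. Two clarifications will close the gaps you flagged.

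For the odd part, the involution is simply $\sigma(\l)=1-\l$: the $d=4$ case of \Cref{lem:2F1transform} (with $\kappa_4=-2$) gives $F_p(1-\l)=\phi(-2)\,F_p(\l)$ for $\l\notin\{0,1\}$, and the values at $\l\in\{0,1,\tfrac12\}$ are handled by \eqref{eq:2F1one} and \eqref{eq:2F1middle}. This is exactly \Cref{cor:oddmoments}; no Hasse--Davenport manipulation is needed.

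For the even part, the paper does not invert the trace relation to produce a single ``main term'' $M_{2n}(p)$ and then read its leading coefficient geometrically. Instead it reads \Cref{thm:newlevel2} as a \emph{recursion}: the formula expresses one Hecke trace $\Tr_{2r+2}(\Gamma_0(2),p)$ as the $2r$-th moment plus a linear combination of lower even moments plus $\delta_{4,2r}(p)$. Since $\delta_{4,2r}(p)=O(p^r)$ in \emph{both} residue classes mod $4$ (the Jacobi sum has modulus $\sqrt{p}$) and the trace is $O(p^{r+1/2})$ by Deligne, dividing by $p^{r+1}$ and letting $p\to\infty$ collapses everything to a recursion among the limiting moments alone. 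The Catalan identification is then the $m=2r$, $n=r$ case of Chu's identity (\Cref{lem:chu}), applied inductively. This dissolves your uniformity concern: no separate analysis of the $p\equiv 1$ and $p\equiv 3\pmod 4$ cases is needed beyond the single observation $\delta_{4,2r}(p)=O(p^r)$. Your Tate-class interpretation of the leading coefficient is morally correct, but the paper's combinatorial route is shorter and requires no geometric input.
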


Our techniques also allow for the following refinement of \eqref{eq:OSS} to square values.

\begin{theorem}\label{thm:squaremoments}
If $m$ is a fixed positive integer then
$$\lim_{p \to \infty} \frac{1}{p^{(\frac{m}{2}+1)}} \sum_{\l \in \fp} H_{p}(\l^2)^{m} = \begin{cases}
0 &\quad \text{if m is odd}\\
C(n) &\quad \text{if m = 2n is even}
\end{cases}$$
\end{theorem}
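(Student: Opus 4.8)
The plan is to split the sum over $\l^{2}$ into the sum of \eqref{eq:OSS} plus a quadratic twist, and then show the twist is negligible. Write $\phi$ for the quadratic character of $\fpc$. Since $\l \mapsto \l^{2}$ maps $\fp$ onto $\{0\} \cup (\fpc)^{2}$, hitting $0$ once and each nonzero square twice, and since the indicator of a nonzero square equals $\tfrac{1}{2}(1+\phi(\mu))$, the convention $H_{p}(0)=1$ gives
\[
\sum_{\l \in \fp} H_{p}(\l^{2})^{m} \;=\; \sum_{\mu \in \fp} H_{p}(\mu)^{m} \;+\; \sum_{\mu \in \fpc} \phi(\mu)\, H_{p}(\mu)^{m}.
\]
The first sum is the left side of \eqref{eq:OSS} with $e=1$, so after dividing by $p^{m/2+1}$ it tends to $0$ for odd $m$ and to $C(n)$ for $m=2n$. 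Thus it suffices to prove that the \emph{quadratic twist moment} $T_{m}(p) := \sum_{\mu \in \fpc} \phi(\mu) H_{p}(\mu)^{m}$ is $o\big(p^{m/2+1}\big)$.

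For odd $m$ I would use the $\mu \mapsto 1/\mu$ symmetry. Koike's identity $H_{p}(\mu) = \phi(-1) a_{\mu}^{\text{Leg}}(p)$ for $\mu \neq 0,1$, together with the isomorphism $E_{\mu}^{\text{Leg}} \cong \big(E_{1/\mu}^{\text{Leg}}\big)^{(\mu)}$ (the Legendre curve of parameter $\mu$ is the quadratic twist by $\mu$ of the one of parameter $1/\mu$, whence $a_{\mu}^{\text{Leg}}(p) = \phi(\mu) a_{1/\mu}^{\text{Leg}}(p)$), yields $H_{p}(1/\mu) = \phi(\mu) H_{p}(\mu)$ for all $\mu \in \fpc$; this also follows directly from the finite-field $_{2}F_{1}$ transformation $\mu \mapsto 1/\mu$. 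For odd $m$ one has $\phi(\mu)^{m} = \phi(\mu)$, so $\phi(\mu) H_{p}(\mu)^{m} = \big(\phi(\mu) H_{p}(\mu)\big)^{m} = H_{p}(1/\mu)^{m}$, and re-indexing $\mu \mapsto 1/\mu$ gives $T_{m}(p) = \sum_{\mu \in \fpc} H_{p}(\mu)^{m} = \sum_{\mu \in \fp} H_{p}(\mu)^{m} - 1$, which is $o\big(p^{m/2+1}\big)$ by \eqref{eq:OSS}. This settles the odd case.

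For $m = 2n$ the same symmetry only shows $H_{p}(\mu)^{2n} = H_{p}(1/\mu)^{2n}$, which leaves $T_{2n}(p)$ invariant rather than bounding it, so genuine cancellation is needed. Setting $a_{\mu} := \phi(-1) H_{p}(\mu) = a_{\mu}^{\text{Leg}}(p)$ for $\mu \neq 0,1$, I would use the standard expansion
\[
a_{\mu}^{2n} \;=\; \sum_{j=0}^{n} \left( \binom{2n}{j} - \binom{2n}{j-1} \right) p^{j}\, t_{2n-2j}(\mu),
\]
where $t_{k}(\mu)$ is the trace of $\Frob_{p}$ on the $k$-th symmetric power of the Legendre local system at $\mu$, so $t_{0} \equiv 1$, $|t_{k}(\mu)| \le (k+1) p^{k/2}$, and the top coefficient ($j=n$) is $C(n)$. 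Twisting by $\phi$ and summing over $\mu \neq 0,1$: the $j=n$ term contributes $C(n) p^{n} \sum_{\mu} \phi(\mu) = -C(n) p^{n} = O(p^{n})$ — the twist collapses the main term from order $p^{n+1}$ to $O(p^{n})$ — while for each $j < n$ the inner sum $\sum_{\mu} \phi(\mu)\, t_{2n-2j}(\mu)$ is a sum of $\Frob_{p}$-traces of a geometrically irreducible, nontrivial sheaf of weight $2n-2j$ on an open curve (the Kummer sheaf $\mathcal{L}_{\phi}$ tensored with a symmetric power of the Legendre local system), hence is $O\big(p^{(2n-2j+1)/2}\big)$ by Deligne's estimates; equivalently, in trace-formula language this inner sum is essentially a trace of the $p$-th Hecke operator on a fixed finite-dimensional space of cusp forms, to which the Hoffman--Li--Long--Tu formulas and Deligne's bound apply. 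Multiplying by $p^{j}$ gives $O\big(p^{n+1/2}\big)$ per term, so $T_{2n}(p) = O\big(p^{n+1/2}\big) = o\big(p^{n+1}\big)$, completing the proof.

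The hard part is the even case: one must make precise that twisting by $\phi$ destroys the Catalan/elementary main term and that nothing of comparable size $p^{n+1}$ reappears. This is exactly the assertion that the twisted symmetric-power sheaves $\mathcal{L}_{\phi} \otimes \mathrm{Sym}^{k}$ (for $k = 2n-2j \ge 2$) are geometrically irreducible and nontrivial, so that their top compactly-supported cohomology vanishes and the associated character sums exhibit full square-root cancellation; in the trace-formula picture, this is the statement that the relevant twisted Hecke trace formula carries no main term. The change of variables, the odd case, and the bookkeeping of the symmetric-power expansion are routine by comparison.
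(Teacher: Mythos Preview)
Your odd-moment argument is essentially the paper's, repackaged: both rest on the transformation $H_p(1/\mu)=\phi(\mu)H_p(\mu)$ (this is \eqref{eq:Greenetransform}), and your identity $T_{2r+1}(p)=\sum_{\mu\in\fpc}H_p(\mu)^{2r+1}$ is equivalent to the paper's observation that $\sum_{\phi(\mu)=-1}H_p(\mu)^{2r+1}=0$ exactly; both then finish by invoking the odd case of \eqref{eq:OSS}.

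For the even moments the routes genuinely diverge. The paper does \emph{not} split off a quadratic twist at all: it invokes the Ahlgren--Ono trace formula for $\Gamma_0(8)$ (\Cref{thm:squaretraces}(2)), which already packages $\sum_{\l}G_{2r+2}(a_{\l^2}^{\mathrm{Leg}}(p),p)$ as $-\Tr_{2r+2}(\Gamma_0(8),p)+4$, and then runs the same recursion-plus-\Cref{lem:chu} induction used for \Cref{thm:main3} and \Cref{thm:main4}. Since $G_{2r+2}(s,p)$ is exactly your symmetric-power trace $t_{2r}$, the paper is in effect citing a ready-made identity that collapses $\sum_{\mu}(1+\phi(\mu))\,t_{2r}(\mu)$ to a single Hecke trace on a concrete congruence subgroup, to which \Cref{prop:Deligne} applies directly. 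Your route instead bounds $T_{2n}(p)$ term-by-term via Deligne's weight bounds on $\mathcal{L}_\phi\otimes\mathrm{Sym}^{k}(\mathrm{Leg})$; this is correct (the geometric monodromy of the Legendre sheaf is Zariski-dense in $\SL_2$, so each $\mathrm{Sym}^k$ with $k\ge 1$ is irreducible of rank $\ge 2$, the Kummer twist preserves this, $H^2_c$ vanishes, and square-root cancellation follows), but it is heavier machinery than the paper needs, and your aside about the Hoffman--Li--Long--Tu formulas is off target --- those concern $\Gamma_1(3)$, $\Gamma_0(2)$, $\Gamma_1(4)$ and do not produce your twisted sum. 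The trade-off: the paper's argument is self-contained given one citable trace formula, while yours is more structural and would port to settings where no such explicit formula is on the shelf.
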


The even moments in our main results, the Catalan numbers, are ubiquitous in mathematics. For example, $C(n)$ is the number of permutations of $\{1, \ldots, n\}$ which avoid the permutation pattern $(1 \, 2 \, 3)$. Stanley gives 214 combinatorial interpretations of the Catalan numbers in \cite{stanley}. The Catalan numbers also possess many interesting connections to classical terminating hypergeometric functions. For example, for all $n \geq 0$
$$C(n) = \pFq{2}{1}{1-n&-n}{&2}{1}.$$
A third perspective on the Catalan numbers is as the even moments of the traces in the Lie group $\text{SU}(2)$,
$$\int_{\text{SU}(2)} (\text{Tr} X)^{2n} dX = C(n),$$
where the integral is with respect to the Haar measure on $\text{SU}(2)$. In particular, the even moments in \eqref{eq:OSS} and our main results correspond to the semicircular distribution of traces for the matrices in $\text{SU}(2)$. All moments considered in this paper are related to point counting on appropriate families of elliptic curves, such as the Legendre family for \eqref{eq:OSS}. See \Cref{tab:Etfamily} for more information.

The approach to the proof of \eqref{eq:OSS} by Ono, Saad, and Saikia in \cite{oss} involves combining a result of Schoof \cite{schoof} on counting isomorphism classes of elliptic curves with prescribed torsion requirements, bounds on Hurwitz class numbers, and the use of both holomorphic projection and Rankin--Cohen brackets.

Our approach is to use known trace formulas for the $T_{p}$ Hecke operator on spaces of modular forms for congruence subgroups of $\text{SL}_{2}(\Z)$ to obtain a recursion on the moments. We then apply induction on the exponent $m$ using this recursion. The hypergeometric data in \Cref{thm:main3} and \Cref{thm:main4} are related to traces of cusp forms on the groups $\Gamma_{1}(3)$ and $\Gamma_{0}(2)$, respectively.  The methods we use to establish \Cref{thm:main3} and \Cref{thm:main4} can also be applied to a known even weight trace formula for the congruence subgroup $\Gamma_{0}(8)$, established by Ahlgren and Ono \cite{cy1}, to obtain the following square moments for the $H_{p}(\l)$ function in \Cref{thm:squaremoments}.

    A variation of the moments considered in \eqref{eq:OSS}, in the context of McCarthy's $p$-adic hypergeometric functions, was recently studied by Pujahari and Saikia \cite{padic,padic2}. The $\text{SU}(2)$ distribution also arises when counting matrix points on Legendre elliptic curves, as shown by Huang, Ono, and Saad \cite{matrix}. Now given a hypergeometric datum that is defined over $\Q$ and primitive there is no guarantee the moments will converge to a known distribution, such as the $\text{SU}(2)$ distribution. A further restriction on the hypergeometric datum, requiring the data to be algebraic, leads to cases where the $H_{q}$ function can be expressed as a finite sum of character values. These algebraic formulas are finite field analogs of algebraic functions and are established in cases where the even moments do not converge to a known sequence numerically. We now discuss algebraic formulas for hypergeometric data of lengths two and three.

\subsection{Algebraic Formulas in the Finite Field Setting}

The algebraic cases are those in which the solutions to the corresponding hypergeometric differential equation are algebraic functions. See section 3.2 of \cite{long} for background on hypergeometric differential equations.

In \cite{bh} Beukers and Heckman give a simple criteria, called interlacing, to determine if a hypergeometric datum is algebraic.
Suppose that $a_{i}, b_{i} \in \Q$ with $0 \leq a_{1} \leq \cdots \leq a_{n} < 1$ and $0 \leq b_{1} \leq \cdots \leq b_{n} \leq 1$, where the data is consider modulo $\Z$. The two multisets $\alpha = \{a_{1}, \ldots, a_{n}\}$ and $\beta = \{b_{1}, b_{2}, \ldots, b_{n}\}$ are said to interlace if and only if 

\begin{center}
$a_{1}<b_{1}< \cdots < a_{n} < b_{n}$ or $b_{1} < a_{1} < \cdots < b_{n} < a_{n}$.
\end{center}

Beukers and Heckman showed that a hypergeometric datum $\{\alpha, \beta\}$ is algebraic if and only if $\alpha$ and $\beta$ interlace. We first determine algebraic formulas for the three cases of length two data that are defined over $\Q$, primitive, and algebraic. 
\begin{theorem}\label{thm:alg1}
    Let $p$ be an odd prime not dividing $M$ and $\l \in \fp^{\times}$ fixed. Consider the polynomial $f_{\l}(x) = x^{3} + 3x^{2} - 4\l$. Let $N_{f}(\l,p)$ be the number of zeros for $f_{\l}(x)$ in $\fp^{\times}$ counted with multiplicity. If $p > 3$ then
    
    $$(1) \, \,  H_{p}\left[\begin{matrix} \frac{1}{4} & \frac{3}{4}\smallskip \\  &\frac{1}{2} \end{matrix} \; ; \; \l \right] = \bigg(\frac{1 + \phi(\l)}{2} \bigg) \bigg[\phi(1 + \sqrt{\l}) + \phi(1 - \sqrt{\l}) \bigg], $$

    $$(2) \, \,  H_{p}\left[\begin{matrix} \frac{1}{3} & \frac{2}{3} \smallskip \\   & \frac{1}{2} \end{matrix} \; ; \; \l \right] = N_{f}(\l,p) - 1,$$

$$(3) \, \, H_{p}\left[\begin{matrix} \frac{1}{6} & \frac{5}{6} \smallskip \\   & \frac{1}{2} \end{matrix} \; ; \; \l \right] = \phi \bigg(\frac{\l}{3} \bigg)(N_{f}(\l,p) - 1).$$ 
In particular, 
$$
 H_{p}\left[\begin{matrix} \frac{1}{4} & \frac{3}{4}\smallskip \\  &\frac{1}{2} \end{matrix} \; ; \; 1 \right] =\phi(2),   H_{p}\left[\begin{matrix} \frac{1}{3} & \frac{2}{3} \smallskip \\   & \frac{1}{2} \end{matrix} \; ; \; 1 \right] =1, \text{and} \, \,   H_{p}\left[\begin{matrix} \frac{1}{6} & \frac{5}{6} \smallskip \\   & \frac{1}{2} \end{matrix} \; ; \; 1 \right]=\phi(3). 
$$

\end{theorem}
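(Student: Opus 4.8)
The plan is to collapse McCarthy's definition \eqref{eq:Hqdef} of each of the three $H_{p}$ into a single character sum over $\fp$. Since all three data are algebraic, the underlying hypergeometric local system has finite monodromy (the monodromy groups being dihedral of orders $8$, $6$, and $12$ respectively), and the concrete reflection of this is that the length-two product of Gauss-sum ratios in \eqref{eq:Hqdef} telescopes to a single Jacobi sum; summing that Jacobi sum against $\chi$ by orthogonality of multiplicative characters then reduces everything to an explicit sum over $\fp$. I would first carry this out under the simplifying hypothesis $p\equiv 1\pmod M$, so that the roots of unity of order $M$ lie in $\fp^{\times}$, and remove that hypothesis at the end.

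Fix, for (1), a character $\psi\in\fphat$ of order $4$ (so $\psi^{2}=\phi$), and for (2) and (3) a character $\rho\in\fphat$ of order $3$. Substituting the datum into \eqref{eq:Hqdef}, the two Gauss sums coming from $\beta=\{1,\tfrac12\}$ are $g(\bar\chi)$ and $g(\bar\chi\phi)$, and the duplication formula $g(\bar\chi)g(\bar\chi\phi)=\bar\chi(4)^{-1}g(\phi)g(\bar\chi^{2})$ combines them into one Gauss sum. The remaining three Gauss sums $g(A)$, $g(B)$, $g(\bar\chi^{2})$ — where $\{A,B\}=\{\chi\psi,\chi\bar\psi\}$ in case (1) and $\{\chi\rho,\chi\rho^{2}\}$ in cases (2)/(3) — satisfy $AB=\chi^{2}$, so the Jacobi-sum identity $g(A)g(B)g(\overline{AB})=\overline{AB}(-1)\,p\,J(A,B)$ collapses them to $p\,J(A,B)$ (and $\overline{AB}(-1)=1$). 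Expanding $J(A,B)$ and using orthogonality of multiplicative characters, the sum over $\chi$ is pinned to the solutions $x\in\fp$ of $4\l x(1-x)=1$, i.e.\ to the (at most two) roots $x,\,1-x$ of a fixed quadratic; setting $t=x/(1-x)$ one checks $t+t^{-1}=4\l-2$, so what survives is $\psi(t)+\psi(t)^{-1}$ in case (1) and $\rho(t)+\rho(t)^{-1}$ in case (2). In case (1), the classical evaluation of $\psi(t)+\psi(t)^{-1}$ — it is $0$ whenever $\l$ is a non-square since then $\phi(t)=\phi(\l)=-1$ forces $\psi(t)=\pm i$, and otherwise it records whether $1\pm\sqrt\l$ are squares — gives, after checking that the accumulated constants cancel (the key identity being $\psi(4)=\phi(2)=\psi(-1)$), the right-hand side of (1); specialization at $\l=1$ gives $\phi(1+1)+\phi(1-1)=\phi(2)$.

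For (2), I would instead identify $\rho(t)+\rho(t)^{-1}$ with $N_{f}(\l,p)-1$ by a Chebyshev-type substitution: $u=x-1$ turns $f_{\l}(x)=0$ into $u^{3}-3u=4\l-2$, and $u=v+v^{-1}$ turns this into $v^{3}+v^{-3}=4\l-2$, i.e.\ $v^{3}\in\{t,t^{-1}\}$; counting the resulting $v$ and dividing by the $2$-to-$1$ map $v\mapsto v+v^{-1}$ gives $N_{f}(\l,p)=1+\rho(t)+\rho(t)^{-1}$, and since $\rho(-1)=1$ the prefactors cancel cleanly, so $H_{p}=N_{f}(\l,p)-1$; the factorization $t^{3}-3t+2=(t-1)^{2}(t+2)$ records the value $1$ at the singular $\l=1$. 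Finally, (3) is the quadratic twist of (2): the top parameters of $\{\tfrac16,\tfrac56\}$ differ from those of $\{\tfrac13,\tfrac23\}$ by $\tfrac12$, so running the same collapse with each $g(\chi\rho^{j})$ replaced by $g(\chi\rho^{j}\phi)$ multiplies the summand by a quadratic character and, through the normalizing Gauss sums, introduces the factor $\phi(27)=\phi(3)$; tracking this yields $\phi(\l/3)\,(N_{f}(\l,p)-1)$, hence $\phi(3)$ at $\l=1$.

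It remains to remove the hypothesis $p\equiv 1\pmod M$. Both sides of (1)--(3) are traces of Frobenius at $p$ on Artin representations of $\mathrm{Gal}(\overline{\Q}/\Q)$ — the right-hand sides manifestly (they are built from Legendre symbols and from the splitting of the fixed polynomials $x^{2}-\l$ and $f_{\l}$), and $H_{p}$ because, by \cite{bcm}, for data defined over $\Q$ it computes the Frobenius trace on the associated hypergeometric motive, which for algebraic data has finite monodromy and hence is Artin. Having matched the two traces at every prime $p\equiv 1\pmod M$ — a set of positive density — the Chebotarev density theorem (equivalently Brauer--Nesbitt) forces the representations to be isomorphic, so the two sides agree at all $p\nmid M$, i.e.\ all $p>3$. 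The step I expect to be the main obstacle is precisely the constant bookkeeping in the collapse: tracking the powers of $2$, $3$, and $-1$ and the stray factors $g(\phi)$, $g(\rho)$ so that they cancel exactly and leave the clean twists $\phi(2)$ and $\phi(3)$, together with the two degenerate situations that lie outside the generic manipulation and must be argued directly — non-square $\l$ in (1) and the singular value $\l=1$, where $f_{\l}$ acquires a repeated root.
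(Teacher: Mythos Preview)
Your collapse of the Gauss-sum product to a single Jacobi sum and the subsequent orthogonality argument is the same mechanism the paper uses, but the paper's execution is shorter and sidesteps your final extension step entirely. Rather than working from \eqref{eq:Hqdef} with an auxiliary order-$3$ character $\rho$, the paper begins from the BCM form of $H_{p}$, valid for \emph{all} $p>3$ since the datum is defined over $\Q$, which for (2) reads
\[
H_{p}\left[\begin{matrix} \tfrac{1}{3} & \tfrac{2}{3} \smallskip\\ & \tfrac{1}{2}\end{matrix};\,\lambda\right]
=\frac{1}{p-1}\sum_{\chi}\frac{g(\chi^{3})g(\overline{\chi^{2}})}{g(\chi)}\,\chi\!\left(\frac{4\lambda}{27}\right).
\]
One use of \Cref{prop:jacobiprops}(1),(5) turns the Gauss-sum ratio into $J(\chi^{3},\overline{\chi^{2}})=J(\overline{\chi^{2}},\overline{\chi})$, whose expansion is $\sum_{a}\overline{\chi}(a^{2}(1-a))$; orthogonality then pins $a$ to the roots of $27a^{2}(1-a)=4\lambda$ and delivers $N_{f}(\lambda,p)-1$ directly after the substitution $a\mapsto -a/3$, with no cubic character and no Chebyshev--Cardano detour. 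Part (3) is then the literal change of variable $\chi\mapsto\phi\chi$, which produces the twist $\phi(4\lambda/27)=\phi(\lambda/3)$.

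Your Chebotarev step, as stated, has a genuine gap. Matching two Artin characters of $\Gal(\overline{\Q}/\Q)$ on the primes $p\equiv 1\pmod{M}$ only shows the underlying representations agree on the subgroup $\Gal(\overline{\Q}/\Q(\zeta_{M}))$; it does \emph{not} force them to be isomorphic over $\Q$. The trivial character and any nontrivial Dirichlet character modulo $M$ already furnish a counterexample. So your argument as written proves (1)--(3) only up to an undetermined twist by a character of $(\Z/M\Z)^{\times}$, which is precisely the ambiguity you need to resolve. The clean fix is to discard the Chebotarev step: since the data are defined over $\Q$, the BCM expression above \emph{is} the definition of $H_{p}$ for every $p>3$, and if you apply Hasse--Davenport once more to absorb your $\rho$ (equivalently, run your collapse starting from that expression), the identity holds at all $p>3$ from the outset and no density argument is needed.
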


Note the polynomial $f_{\l}(x)$ has no repeated roots for $\l \in \fp \setminus \{0,1\}$. Formula $(1)$ of \Cref{thm:alg1} is a generalization of Theorem 8.11 of \cite{long}, in the case when $A$ is an order four character, to primes $p \equiv 3 \mod 4$. This generalization is possible by the results in \cite{bcm} since the hypergeometric data is defined over $\Q$. Formula $(2)$ is Corollary 1.6 of \cite{bcm}, but we give a direct character sum proof, and formula $(3)$ follows from formula $(2)$.

The analogous result for the three cases of length three data is below.

\begin{theorem}\label{thm:alg2}
    Let $p > 3$ be a prime not dividing $M$, and fix $\l \in \fp^{\times}$. Then

    $$(1) \, \, H_{p}\left[\begin{matrix} \frac{1}{2} & \frac{1}{4} & \frac{3}{4}\smallskip \\  &\frac{1}{3}&\frac{2}{3} \end{matrix} \; ; \; \l \right] = \phi(-3 \l) \sum_{a \in A_{\l,p}} \phi(a) $$

    where $A_{\l,p} = \{a \in \fp \setminus \{0,1\} \, | \, 27 \l(a^{3}-a^{2}) + 4 = 0 \}$.

    $$(2) \, \, H_{p}\left[\begin{matrix} \frac{1}{2} & \frac{1}{6} & \frac{5}{6}\smallskip \\  &\frac{1}{4}&\frac{3}{4} \end{matrix} \; ; \; \l \right] =  \sum_{a \in B_{\l,p}} \phi(a^{2}-a) - \phi \bigg(-\frac{\l}{3} \bigg)$$

    where $B_{\l,p} = \{a \in \fp \setminus \{0,1\} \, | \, 256\l(a^{4}-a^{3}) + 27 = 0  \}$.

    $$(3) \, \,  H_{p}\left[\begin{matrix} \frac{1}{2} & \frac{1}{6} & \frac{5}{6}\smallskip \\  &\frac{1}{3}&\frac{2}{3} \end{matrix} \; ; \; \l \right] = \sum_{a \in C_{\l,p}} \phi(a) $$

    where $C_{\l,p} = \{a \in \fp \setminus \{0,1\} \, | \, (\l-1)a^{3} + 3a^{2}-3a+1=0  \}$.
\end{theorem}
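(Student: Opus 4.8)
The plan is to prove each of the three identities by a direct manipulation of Gauss sums, in the spirit of the character-sum proof announced for formula $(2)$ of \Cref{thm:alg1}. One starts from McCarthy's definition \eqref{eq:Hqdef} with $n=3$, so that the summand is a ratio of products of six Gauss sums $g(\chi\,\omega^{(p-1)a_i})$ and $g(\overline\chi\,\omega^{-(p-1)b_i})$, $i=1,2,3$, where $\chi=\omega^k$. For primes $p$ in the residue classes mod $M$ for which the relevant roots of unity (a cubic character $\chi_3$ or a quartic character $\chi_4$) exist this manipulation is direct; for the other residue classes one uses that, the datum being defined over $\Q$, $H_p(\alpha,\beta;\l)$ is insensitive to the congruence of $p$ mod $M$ \cite{bcm} and is a Frobenius trace on the associated hypergeometric Galois representation \cite{Katzexp}, which by the Beukers--Heckman interlacing criterion has finite monodromy, hence is of Artin type, and whose traces are therefore given by the same character sums. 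Equivalently one may invoke the point-counting identification of \cite{bcm}: $H_p$ equals, up to elementary terms, the number of $\fp$-points on an explicit toric hypersurface, whose relevant cohomology is forced by algebraicity to be of Artin type, so the count reduces to a $\phi$-weighted root count of exactly the asserted shape.

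The computational engine is the Hasse--Davenport product formula $\prod_{j=0}^{d-1} g(\chi\psi^j)=\chi^{-d}(d)\,g(\chi^d)\prod_{j=1}^{d-1}g(\psi^j)$ for $\psi$ of order $d$, together with the reflection relation $g(\chi)g(\overline\chi)=\chi(-1)p$. In each case the parameters are arranged so that, after reindexing by $\chi$, the six Gauss sums in the summand telescope: one groups triples $\{g(\rho\chi),g(\rho\chi\chi_3),g(\rho\chi\overline{\chi_3})\}$ (the $d=3$ case, for $(1)$ and $(3)$), or forms a quartic grouping out of a $d=4$ step combined with an auxiliary $d=2$ step and a reflection (case $(2)$), reducing the summand to a single Gauss sum in $\chi^d$ times explicit constants. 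The sum over $\chi\in\fphat$ then collapses onto the character group of $d$-th roots, and character orthogonality turns it into a count of solutions of a polynomial equation weighted by $\phi$; this is the origin of the cubics $27\l(a^3-a^2)+4$ and $(\l-1)a^3+3a^2-3a+1$ and of the quartic $256\l(a^4-a^3)+27$. For example, in case $(3)$ the defining equation rearranges to $\l a^3=(a-1)^3$, i.e.\ $\l=(1-1/a)^3$, so the substitution $\mu=1-1/a$ identifies $C_{\l,p}$ with the set of cube roots of $\l$ lying in $\fp\setminus\{0,1\}$ and gives $\sum_{a\in C_{\l,p}}\phi(a)=\sum_{\mu^3=\l}\phi(1-\mu)$; one then checks that the telescoped Gauss-sum expression equals precisely this. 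Cases $(1)$ and $(2)$ go the same way, the prefactors $\phi(-3\l)$ and $\phi(-\l/3)$ and the correction term $-\phi(-\l/3)$ in $(2)$ being produced by the constants $\chi^{-d}(d)$, the argument $\chi((-1)^n\l)$ in \eqref{eq:Hqdef}, and the isolated contribution of the trivial character.

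Two routine reductions complete the argument: rewriting the residual character sums as $\phi$-weighted root counts via $\#\{y:y^2=f(a)\}=1+\phi(f(a))$ and $\sum_\chi\chi(x)=(p-1)[x=1]$, and checking that the excluded values $a\in\{0,1\}$ account exactly for the degenerate terms discarded en route, so that, as in \Cref{thm:alg1}, one restricts to non-degenerate $\l$. I expect the main obstacle to be precisely this bookkeeping: propagating the power of $p$, the sign constants, and the $\chi^{-d}(d)$ factors uniformly while the cubic or quartic character may or may not be present depending on $p\bmod M$, and verifying in each of the three cases that the telescoped sum is supported on the roots of the asserted polynomial and not on those of a Galois twist of it. Cross-checking against known contiguity and reduction relations linking these length-three data to the length-two data of \Cref{thm:alg1} provides a further consistency test.
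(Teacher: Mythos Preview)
Your overall strategy---collapse the Gauss-sum product via Hasse--Davenport and reflection, reduce to a single Jacobi sum, then apply character orthogonality to get a $\phi$-weighted root count---is exactly what the paper does. For instance, for part~$(1)$ the paper reaches $\frac{1}{p-1}\sum_\chi J(\phi\chi,\phi\chi^2)\,\chi(27\l/4)$ (up to a global $\phi(-1)$), substitutes $\chi\mapsto\phi\chi$, and expands the Jacobi sum to obtain $\sum_{a\in A_{\l,p}}\phi(a)$; the residual $\delta(\phi\chi^3)$ term is shown to vanish. Your observation for part~$(3)$ that the cubic rearranges to $\l=(1-1/a)^3$ is correct and is a helpful reality check.

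Where your proposal diverges from the paper is in the handling of the congruence condition on~$p$. You plan to run the Gauss-sum argument only when the cubic or quartic character exists, and to cover the remaining residue classes by appealing to the associated Galois representation being of Artin type (finite monodromy via Beukers--Heckman, hence the same character formula). This detour is unnecessary and, as written, is not a proof: ``the traces are therefore given by the same character sums'' is an assertion that would itself require verification prime by prime. The paper avoids the issue entirely. Because the datum is defined over~$\Q$, the $H_p$ function admits the Beukers--Cohen--Mellit form
\[
H_p\!\left[\begin{matrix}\tfrac12&\tfrac14&\tfrac34\\ &\tfrac13&\tfrac23\end{matrix};\l\right]
=\frac{1}{p-1}\sum_{\chi}\frac{g(\chi^4)\,g(\overline{\chi^3})}{g(\chi)}\,\chi\!\left(-\frac{27\l}{256}\right)
\]
valid for \emph{every} prime $p>3$, with no auxiliary $\chi_3$ or $\chi_4$ required. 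All subsequent steps use only the quadratic character $\phi$ and the $d=2$ Hasse--Davenport relation (applied twice: once to split $g(\chi^4)$, once to convert $g(\chi^2)/(g(\phi)g(\chi))$), so the computation is uniform in $p$ from the first line. Replace your case split and the Galois-representation fallback by this BCM starting point and your sketch becomes the paper's proof.
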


See \Cref{sec:algproofs} for the proofs of \Cref{thm:alg1} and \Cref{thm:alg2}. Note the formulas in \Cref{thm:alg2} can be viewed as analogous to the formulas that Theorem 1.5 of \cite{bcm} give, in a few cases, if the non--singular completion of the affine variety $V_{\l}$, defined in \cite{bcm}, can be explicitly determined. Our formulas allow for efficient computation of the $H_{p}$ function for algebraic data. Further discussions on moments and algebraic formulas for data that is defined over $\Q$ and primitive are given in \Cref{sec:conj}.

\section*{Acknowledgements}
 I thank Ling Long, Esme Rosen, Hasan Saad, and Fang--Ting Tu for their helpful comments and discussions.

\section{The Trace Formulas}\label{sec:traces}
We use $\Tr_{k}(\Gamma,p)$ to denote the trace of the Hecke operator $T_{p}$ on the space of cusp forms $S_{k}(\Gamma)$, where $\Gamma$ is a congruence subgroup of SL$_{2}(\Z)$. Following Frechette, Ono, and Papanikolas \cite{tr1} define 
$$G_{k}(s,p) := \sum_{j=0}^{\frac{k}{2}-1} (-1)^{j} \binom{k-2-j}{j} p^{j} s^{k-2j-2}$$

for $k \geq 2$ even. The proof strategy used on the new results in \Cref{ss:HGM} is to apply induction on the exponent of the moment and use the relevant trace formula to obtain the necessary recursion.

\subsection{Trace Formulas for Subgroups of \mathinhead{\Gamma_{0}(3)}{G3}}

A trace formula for even weight cusp forms on $\Gamma_{0}(3)$ was first obtained by Lennon in Corollary 1.3 of \cite{lennon}. However, this formula is an average of $H_{p^i}(\l)$ values as both $i$ and $\l$ vary. Lennon's formula does not involve the exponent $m$ on the $H_{p}$ function values, which is necessary for our recursive argument. Instead, we use the following result to prove \Cref{thm:main3}.

\begin{theorem}[Hoffman, Li, Long, and Tu \cite{HLLT}]\label{thm:level3trace}
Let $k \geq 1$, $p$ be an odd prime, and $\eta_3$ be any order three character on $\fphat$. Then the following is true.
$$-\Tr_{k+2}(\Gamma_{1}(3),p) = 1 + \phi(-3)^{k} - \delta_{3,k}(p) + \sum_{\l = 2}^{p-1} \sum_{j=0}^{\lfloor \frac{k}{2} \rfloor} (-1)^{j} \binom{k-j}{j} p^{j} H_{p}\left[\begin{matrix} \frac{1}{3} & \frac{2}{3} \smallskip \\  &1\end{matrix} \; ; \; \l \right]^{k-2j} $$
where
$$\delta_{3,k}(p) = \begin{cases}
\displaystyle\sum_{\substack{0 \leq i \leq k \\ k \equiv 2i \, (\text{mod} \, \, 3)}}p^{i} J(\eta_3, \eta_3)^{k-2i}  &\quad \text{if $p \equiv 1 \pmod{3}$}\\

\vspace{2mm}
0&\quad \text{if $p \equiv 2 \pmod{3}$, \quad $k$ odd}\\

\vspace{2mm}
(-p)^{\frac{k}{2}} &\quad \text{if $p \equiv 2 \pmod{3}$, \quad $k$ even}
\end{cases}$$
and $$J(\chi, \psi):= \sum_{x \in \fq^{\times}} \chi(x) \psi(1-x)$$ is a Jacobi sum.
\end{theorem}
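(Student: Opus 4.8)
Write $\mathcal H(\l):=H_p\!\left[\begin{smallmatrix}\frac13&\frac23\\&1\end{smallmatrix}\,;\,\l\right]$ for the hypergeometric value in the statement. The plan is to pass from $\mathcal H$ to traces of Frobenius on an elliptic family attached to $\Gamma_1(3)$, to recognize the inner sum over $j$ as a trace on a symmetric power, and then to compare with $\Tr_{k+2}(\Gamma_1(3),p)$ via a Grothendieck--Lefschetz and Eichler--Shimura argument (or, more classically, the Eichler--Selberg trace formula). First I would record a Koike-type identity: for $\l\in\fp\setminus\{0,1\}$ the value $\mathcal H(\l)$ is, up to sign, the Frobenius trace $a_{E_\l}(p):=p+1-\#E_\l(\fp)$, where $E_\l$ carries a rational point of order $3$, so that $\{E_\l\}$ is essentially the universal curve over the modular curve $Y_1(3)$; this follows from the point-counting results of Beukers, Cohen, and Mellit \cite{bcm} specialized to the datum $\{\{\tfrac13,\tfrac23\},\{1,1\}\}$ (compare \Cref{tab:Etfamily}). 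The removed values $\l\in\{0,1,\infty\}$ are the two cusps of $X_1(3)$ together with the elliptic point $j=0$, so in particular the sum in the statement omits the $j=0$ fiber. Equivalently, $\mathcal H$ is the trace function of Katz's hypergeometric sheaf $\mathcal F=R^1\pi_*\Q_\ell$ of the family $\pi\colon E_\l\to U$, $U=\mathbb A^1\setminus\{0,1\}$.

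Next, the elementary identity
$$\sum_{j=0}^{\lfloor k/2\rfloor}(-1)^j\binom{k-j}{j}p^j\,t^{k-2j}=\frac{\rho^{\,k+1}-\bar\rho^{\,k+1}}{\rho-\bar\rho}\qquad(\rho+\bar\rho=t,\ \rho\bar\rho=p)$$
recognizes the inner sum as the trace of $\Frob_p$ on $\mathrm{Sym}^k V_{E_\l}$ (up to the sign from the Koike identity), where $V_{E_\l}$ is the Tate module of $E_\l$. Hence the full double sum equals, up to sign, $\sum_{\l\in U(\fp)}\mathrm{tr}\big(\Frob_p\mid(\mathrm{Sym}^k\mathcal F)_{\bar\l}\big)$, and the Grothendieck--Lefschetz fixed-point formula on $U$ rewrites this as $\sum_i(-1)^i\,\mathrm{tr}\big(\Frob_p\mid H^i_c(U_{\overline{\fp}},\mathrm{Sym}^k\mathcal F)\big)$. (A more classical route replaces this step by partitioning the sum over $\l$ according to $t=a_{E_\l}(p)$ and invoking Deuring's theorem together with Schoof's \cite{schoof} count of elliptic curves over $\fp$ with prescribed trace and a rational $3$-torsion point; this turns $\sum_\l(\text{inner sum})$ into the elliptic term of the Eichler--Selberg trace formula for $S_{k+2}(\Gamma_1(3))$, exactly as in the level $4$ and $8$ analogues used in \cite{cy1,lennon,oss}.)

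Then I would identify the cohomology. For $k\ge1$ one has $H^0_c=H^2_c=0$ (the geometric monodromy is Zariski dense in $\SL_2$ since the datum is not algebraic), while, by the Eichler--Shimura isomorphism, Deligne's construction of modular Galois representations, and the Eichler--Shimura congruence relation, $H^1_c(U_{\overline{\fp}},\mathrm{Sym}^k\mathcal F)$ is an extension of the weight-$(k+2)$ cuspidal motive of $\Gamma_1(3)$ by the local inertia invariants $\bigoplus_{x\in\{0,1,\infty\}}(\mathrm{Sym}^k\mathcal F)^{I_x}$ at the two cusps and at $j=0$. The cuspidal part contributes $\pm\,\Tr_{k+2}(\Gamma_1(3),p)$; the two cusps, where the local monodromy is unipotent and is twisted by the nebentypus $\chi_{-3}$ at the level-$3$ cusp, contribute the constant $1+\phi(-3)^k$; and the $j=0$ fiber, which has complex multiplication by $\Z[\zeta_3]$ and local monodromy of order $3$, contributes up to sign the trace of $\Frob_p$ on $(\mathrm{Sym}^k V_{E_0})^{\mu_3}$. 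Evaluating this last trace gives exactly $\delta_{3,k}(p)$: when $p\equiv1\pmod3$, $E_0$ is ordinary and its Frobenius generates the prime above $p$ in $\Z[\zeta_3]$, which is $J(\eta_3,\eta_3)$ up to a root of unity, so the invariant subspace yields $\sum_{k\equiv2i\,(3)}p^iJ(\eta_3,\eta_3)^{k-2i}$; when $p\equiv2\pmod3$, $E_0$ is supersingular with Frobenius eigenvalues $\pm\sqrt{-p}$, and the same computation collapses to $0$ for odd $k$ and to $(-p)^{k/2}$ for even $k$. Assembling the four pieces with the correct signs yields the formula.

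The hard part will be precisely this assembly: matching every sign and every power of $p$ so that the boundary contributions total exactly $1+\phi(-3)^k-\delta_{3,k}(p)$. This requires a careful local analysis at each of $0$, $1$, $\infty$ — the unipotent and pseudoreflection monodromies at the cusps, weighted correctly by $\chi_{-3}$, and above all the order-$3$ CM monodromy at $j=0$, where one must keep track of $\mathrm{Aut}(E_0)$, the action of Frobenius versus that of the automorphisms on $V_{E_0}$, and the splitting behaviour of $p$ in $\Z[\zeta_3]$ (on the classical route, the corresponding subtleties are in Schoof's count at supersingular traces and at the elliptic point). The other ingredients — the Koike-type identity, the symmetric-power rewriting, and the Lefschetz or Eichler--Selberg comparison — are by now standard.
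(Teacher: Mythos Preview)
The paper does not prove this theorem. It is quoted verbatim from Hoffman, Li, Long, and Tu \cite{HLLT} and used as a black box in the proof of \Cref{thm:main3}; there is no argument in the paper to compare your proposal against. (The same is true of the other trace formulas in \Cref{sec:traces}: \Cref{thm:newlevel2} and \Cref{thm:level4trace} are cited from \cite{HLLT}, and \Cref{thm:squaretraces} from \cite{a,cy1}. The paper's own contribution begins only once these are in hand.)

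That said, your sketch is a reasonable outline of how such formulas are typically established. Both routes you describe --- the $\ell$-adic one via Grothendieck--Lefschetz on $\mathrm{Sym}^k\mathcal F$ over the open modular curve, and the classical one via Schoof's counts and the Eichler--Selberg trace formula --- are standard templates, and the paper itself alludes to the second when it discusses the level-$4$ and level-$8$ analogues. You have correctly identified the genuinely delicate point: the local contributions at the two cusps and especially at the $j=0$ elliptic point, where the order-$3$ automorphisms and the split/inert behaviour of $p$ in $\Z[\zeta_3]$ produce the three-branch $\delta_{3,k}(p)$. Your identification of $\Frob_p$ on the CM fiber with $J(\eta_3,\eta_3)$ up to a root of unity is right in spirit, but making the constants come out exactly as $1+\phi(-3)^k-\delta_{3,k}(p)$ --- and not, say, with a stray factor or a shifted index set --- is where all the work lies, and your proposal leaves that honestly open.
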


\subsection{Trace Formulas for Subgroups of \mathinhead{\Gamma_{0}(2)}{G2}}

The trace formulas needed to prove \Cref{thm:main4}, \Cref{thm:squaremoments}, and give a new proof of \eqref{eq:OSS} are discussed in this section. First note the Legendre elliptic curve, $E_{\l}^{\text{Leg}}$, defined earlier has important connections to $H_{p}(\l)$, as mentioned in the introduction. The analogous curve for the hypergeometric datum $\{\{\frac{1}{2}, \frac{1}{2}, \frac{1}{2}\}, \{1,1,1\}, \l \}$ is the Clausen elliptic curve
$$E_{\l}^{\text{Cl}}: y^{2} = (x-1)(x^{2}+\l)$$
for $\l \notin \left\{-1,0\right\}$. Let $a_{\l}^{\text{Cl}}(p) = p + 1 - |E_{\l}^{\text{Cl}}(\fp)|$ denote the trace of Frobenius for the Clausen curve. The work of Frechette, Ono, and Papanikolas \cite{tr1} gives an even weight trace formula for the group $\Gamma_{0}(2)$ as a polynomial in $a_{\l}^{\text{Cl}}(p)$ and $p$. A variation of their formula, now written in terms of the $H_{p}$ function, was recently established by Hoffman, Li, Long, and Tu \cite{HLLT}.

\begin{theorem}[Hoffman, Li, Long, and Tu \cite{HLLT}]\label{thm:newlevel2}
Let $r \geq 1$ and $p$ is an odd prime. Then the following are true.
$$\Tr_{2r+2}(\Gamma_{0}(2),p) = -2 - \delta_{4,2r}(p) - \sum_{\l = 2}^{p-1} \sum_{j=0}^{r} (-1)^{j} \binom{2r-j}{j} p^{j} H_{p}\left[\begin{matrix} \frac{1}{4} & \frac{3}{4} \smallskip \\  &1\end{matrix} \; ; \; \l \right]^{2(r-j)} $$

where $\eta_{4}$ is any order four character and

$$\delta_{4,2r}(p) = \begin{cases}
\displaystyle{p^{r} \sum_{-\frac{r}{2} \leq i \leq \frac{r}{2}} \bigg(\frac{J(\eta_{4}, \eta_{4})^{2}}{p} \bigg)^{2i}}&\quad \text{if $p \equiv 1 \pmod{4}$}\\
(-p)^{r}&\quad \text{if $p \equiv 3 \pmod{4}$}\\
\end{cases}$$
\end{theorem}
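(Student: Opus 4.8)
The plan is to mirror the strategy behind the Frechette--Ono--Papanikolas trace formula for $\Gamma_{0}(2)$ \cite{tr1} (and behind the analogous trace formulas at other small levels used in \cite{oss,cy1}): pass from the finite field hypergeometric function to a weighted Frobenius-trace count over an explicit family of elliptic curves, rewrite that count via Schoof's theorem \cite{schoof} as a weighted sum of Hurwitz--Kronecker class numbers, and then recognize the result as the Eichler--Selberg trace formula for $S_{2r+2}(\Gamma_{0}(2))$. The same method should also give \Cref{thm:level3trace} for $\Gamma_{1}(3)$, so the two statements would be handled uniformly.

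First I would carry out the hypergeometric-to-geometric translation. The datum $\left\{\left\{\frac14,\frac34\right\},\{1,1\}\right\}$ is primitive and defined over $\Q$, so by the point-counting theorem of Beukers, Cohen, and Mellit \cite{bcm} the value $H_{p}\!\left[\begin{smallmatrix}\frac14 & \frac34 \\ & 1\end{smallmatrix};\lambda\right]$ equals, up to an explicit sign and a possible quadratic-character twist, the Frobenius trace of an elliptic curve $E_{\lambda}/\fp$. I would identify $E_{\lambda}$ with the Clausen curve $E^{\mathrm{Cl}}_{\mu}\colon y^{2}=(x-1)(x^{2}+\mu)$ after a Möbius change of variable $\mu=\mu(\lambda)$ chosen to match the smooth loci, carrying $\fp\setminus\{0,1\}$ onto $\fp\setminus\{0,-1\}$. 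Since the polynomial $G_{2r+2}(s,p)$ is even in $s$, the sign and twist ambiguities are harmless, and the inner double sum in the theorem collapses to $\sum_{\mu}G_{2r+2}\!\left(a^{\mathrm{Cl}}_{\mu}(p),p\right)$. At this point the statement would follow, up to the two degenerate terms, from the Frechette--Ono--Papanikolas formula; alternatively one proves the evaluation of $\sum_{\mu}G_{2r+2}(a^{\mathrm{Cl}}_{\mu}(p),p)$ from scratch by counting elliptic curves over $\fp$ carrying a rational $2$-torsion point (equivalently, lying on $X_{0}(2)$) via \cite{schoof}, producing exactly the class-number combination appearing in the Eichler--Selberg formula for $\Gamma_{0}(2)$.

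It then remains to pin down the constant $-2$ and the correction $\delta_{4,2r}(p)$. The constant is the Eisenstein-plus-dual contribution and is forced by matching constant terms. The correction collects precisely what the generic curve count misses: for $p\equiv 3\pmod 4$ there is no order-four character on $\fpc$, the relevant fibers fall into the supersingular stratum, and one gets the single CM term $(-p)^{r}$; for $p\equiv 1\pmod 4$ one has $p=a^{2}+b^{2}$ with $J(\eta_{4},\eta_{4})$ a Gaussian integer of norm $p$, and expanding $G_{2r+2}$ at the CM traces attached to $a$ and $b$ and summing the resulting finite geometric series in $\left(J(\eta_{4},\eta_{4})^{2}/p\right)^{2}$ produces the closed form $p^{r}\sum_{-r/2\le i\le r/2}\left(J(\eta_{4},\eta_{4})^{2}/p\right)^{2i}$. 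Along the way one must account for the fact that the hypergeometric sum runs over $\fp\setminus\{0,1\}$ while the Clausen sum runs over $\fp\setminus\{0,-1\}$: the fibers dropped on each side are degenerate, and one checks that their contributions are absorbed exactly into the constant and $\delta$ terms under $\mu(\lambda)$.

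I expect the main obstacle to be the geometric identification of the second step: proving that the Beukers--Cohen--Mellit family for this datum is (a twist of) the Clausen family under an explicit $\mu(\lambda)$ with the correct behavior at all three degenerate parameters, and that its reduction mod $p$ carries precisely the $\Gamma_{0}(2)$ level structure that makes the Eichler--Selberg formula for that group — rather than for some other level — appear. Via the route through \cite{tr1} the obstacle is equivalently the clean finite field evaluation $H_{p}\!\left[\begin{smallmatrix}\frac14 & \frac34 \\ & 1\end{smallmatrix};\lambda\right]=\pm\,a^{\mathrm{Cl}}_{\mu(\lambda)}(p)$, a finite field analogue of a classical quadratic (Clausen-type) transformation; once this and the compatible reindexing are in hand, matching the two trace formulas is the bookkeeping described above. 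The $p\equiv 1$ versus $p\equiv 3\pmod 4$ dichotomy in $\delta_{4,2r}(p)$ is the other delicate point, since it is exactly where the CM contribution either splits or stays inert, which is what dictates the Jacobi-sum shape of the correction term.
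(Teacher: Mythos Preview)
Your route is sound, but it is not the derivation the paper records. The paper obtains \Cref{thm:newlevel2} by first applying Theorem~1 of \cite{HLLT} to the length-three datum $\left\{\{\tfrac12,\tfrac12,\tfrac12\},\{1,1,1\}\right\}$, which already produces a $\Gamma_{0}(2)$ trace formula in terms of the $_3F_2$ values, and then converting those $_3F_2$ values to squares of $H_{p}\!\left[\begin{smallmatrix}\frac14&\frac34\\&1\end{smallmatrix};\lambda\right]$ via the finite field Clausen identities (Theorem~7.1 and Proposition~8.5 of \cite{long}); the extension from $p\equiv 1\pmod 4$ to all odd $p$ is handled by the second Clausen formula together with the argument of Lemma~5 in \cite{HLLT}. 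In other words, the paper stays entirely on the hypergeometric side and never touches the Clausen curve or Schoof's count. Your plan instead works geometrically: pass to the $\widetilde{E}_{4}(\lambda)$ family of \Cref{tab:Etfamily}, identify it (up to the quadratic twist by $2$) with $E^{\mathrm{Cl}}_{\lambda-1}$ via the substitution that matches $x(x^{2}+x+\lambda/4)$ with $(x-1)(x^{2}+\mu)$, and then invoke the Frechette--Ono--Papanikolas formula \cite{tr1}. The ``obstacle'' you flag is exactly this curve identification, and it is the geometric shadow of the Clausen formula the paper uses; once you have it, the reparametrization $\mu=\lambda-1$ does carry $\fp\setminus\{0,1\}$ onto $\fp\setminus\{0,-1\}$ and the evenness of $G_{2r+2}(s,p)$ in $s$ kills the twist, so the rest of your bookkeeping goes through. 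The trade-off: the paper's approach is shorter because it imports the HLLT machinery wholesale, while yours is more self-contained and makes the $\Gamma_{0}(2)$ level structure visible through the $2$-torsion on the curve.
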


\Cref{thm:newlevel2} is derived from \cite{HLLT} as follows. Choosing the datum $\left\{\{\frac{1}{2},\frac{1}{2},\frac{1}{2}\},\{1,1,1\}\right\}$ in Theorem 1 of \cite{HLLT} then using Theorem 7.1 and Proposition 8.5 from \cite{long} gives the result when $p \equiv 1 \pmod{4}$. Then the result when $p \equiv 1 \pmod{4}$ be extended to all odd primes by following the proof of Lemma 5 in \cite{HLLT}, now using the other Clausen formula from Theorem 7.1 of \cite{long}.

We use \Cref{thm:newlevel2} to prove \Cref{thm:main4}, the even moments for the hypergeometric datum $\{\{\frac{1}{4}, \frac{3}{4}\}, \{1,1\} \}$. The following theorems of Ahlgren and Ahlgren--Ono provides even weight trace formulas for the groups $\Gamma_{0}(4)$ and $\Gamma_{0}(8)$ and are used to prove \Cref{thm:squaremoments}.

\begin{theorem}[(1) Ahlgren \cite{a}; (2) Ahlgren--Ono \cite{cy1}] \label{thm:squaretraces} If $p$ is an odd prime, and $k \geq 2$ is even, then the following are true.

$$(1) \, \, \, -\Tr_{k+2}(\Gamma_{0}(4),p) = 3 + \sum_{\l=2}^{p-1} G_{k+2}(a_{\l}^{\textnormal{Leg}}(p), p)$$

$$(2) \, \, \, -\Tr_{k+2}(\Gamma_{0}(8),p) = 4 + \sum_{\l=2}^{p-2} G_{k+2}(a_{\l^2}^{\textnormal{Leg}}(p),p).$$

\end{theorem}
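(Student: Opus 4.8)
The plan is to follow the standard route for such Eichler--Selberg type formulas: translate the trace of $T_p$ on $S_{k+2}(\Gamma_0(8))$ into a count of elliptic curves over $\F_p$ with prescribed level-$8$ structure, and then match that count against the sum $\sum_{\l} G_{k+2}(a_{\l^2}^{\textnormal{Leg}}(p),p)$. First I would recall the classical Eichler--Selberg trace formula for $\Tr_{k+2}(\Gamma_0(N),p)$ in the form used by Ahlgren \cite{a}, Ahlgren--Ono \cite{cy1}, and Frechette--Ono--Papanikolas \cite{tr1}: the trace is (up to the contribution of Eisenstein series and the identity/elliptic/hyperbolic boundary terms) a weighted sum over $t$ with $t^2 < 4p$ of Hurwitz--Kronecker class numbers $H_N(4p - t^2)$ times the Gegenbauer-type polynomial $G_{k+2}(t,p)$. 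For $N = 8$ the class number $H_8(4p-t^2)$ counts (with the usual weights) $\F_p$-isomorphism classes of elliptic curves with trace of Frobenius $t$ together with a chosen point of order $8$ in the kernel of Frobenius, i.e. a $\Gamma_0(8)$-structure.

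The second step is to reinterpret the right-hand side. By Koike's theorem (quoted in the introduction) $H_p(\mu) = \phi(-1)\,a_\mu^{\textnormal{Leg}}(p)$ for $\mu \in \F_p \setminus \{0,1\}$, but here the relevant object is $a_{\l^2}^{\textnormal{Leg}}(p)$, the Frobenius trace of the Legendre curve at the \emph{square} parameter $\l^2$. The key elementary fact is that $E_{\l^2}^{\textnormal{Leg}}: y^2 = x(x-1)(x-\l^2)$ carries extra level structure: the point $(\l,0)$-type shifts and the factorization $x - \l^2 = (x-\l)(x+\l)$ after an obvious change of variables produce a rational $2$-isogeny structure, and in fact $E_{\l^2}^{\textnormal{Leg}}$ lies in the modular curve $X_0(8)$ (equivalently $X(2) \to X_0(8)$ via $\l \mapsto \l^2$ realizes the classical map). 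So as $\l$ runs over $\F_p \setminus \{0,\pm 1\}$ the curves $E_{\l^2}^{\textnormal{Leg}}$ sweep out, with the correct multiplicities, exactly the elliptic curves over $\F_p$ equipped with a $\Gamma_0(8)$-structure. Summing $G_{k+2}(a_{\l^2}^{\textnormal{Leg}}(p),p)$ over these $\l$ therefore reproduces $\sum_{t^2 < 4p} H_8(4p-t^2) G_{k+2}(t,p)$ up to boundary corrections. One then needs to track the corrections: the constant $4$ accounts for the four cusps of $\Gamma_0(8)$ (the Eisenstein contribution), and the omitted parameters $\l \in \{0,1,-1\}$ together with any curves with extra automorphisms ($j = 0, 1728$) or supersingular fibers must be checked to contribute only to the constant term and not to the $p$-dependent part. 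This bookkeeping is exactly parallel to the $\Gamma_0(4)$ computation in part (1) and in \cite{a}, and to the $\Gamma_0(2)$ computation in \cite{tr1}.

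The main obstacle I anticipate is not the trace formula itself but the precise dictionary between the parameter $\l^2$ and the level-$8$ structure, including getting the \emph{multiplicities} right: the map $\l \mapsto \l^2$ is $2$-to-$1$, $X_0(8)$ has a nontrivial automorphism group acting on its points, and one must confirm that the double cover and the class-number weights (the $\tfrac12$ for $j=1728$, the $\tfrac13$ for $j=0$) combine to give an \emph{exact} equality rather than an equality up to a bounded error. A clean way to sidestep part of this is to derive (2) from (1) directly: relate $\Tr_{k+2}(\Gamma_0(8),p)$ to $\Tr_{k+2}(\Gamma_0(4),p)$ via the degeneracy maps $X_0(8) \to X_0(4)$ and an Atkin--Lehner/old-new decomposition, then use part (1) to substitute $\sum_\l G_{k+2}(a_\l^{\textnormal{Leg}}(p),p)$ and perform the change of variables $\l \mapsto \l^2$ on the level-$4$ side, checking that the new-at-$8$ part of $S_{k+2}(\Gamma_0(8))$ matches the twist. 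Either way, since this theorem is explicitly attributed to Ahlgren--Ono \cite{cy1}, I would present the argument as a careful specialization of their level-$4$ method, emphasizing the two substantive inputs --- the Eichler--Selberg formula for $\Gamma_0(8)$ and the fact that $\l \mapsto E_{\l^2}^{\textnormal{Leg}}$ parametrizes $\Gamma_0(8)$-structures --- and relegating the elliptic/supersingular boundary terms to a routine verification.
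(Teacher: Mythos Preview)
The paper does not give any proof of this theorem: it is stated with attribution to Ahlgren \cite{a} and Ahlgren--Ono \cite{cy1} and then used as an input to the inductive moment argument in Section~6. So there is no proof in the paper to compare your proposal against.

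That said, your outline is broadly the right shape for how such formulas are established in the cited references: start from the Eichler--Selberg trace formula for $\Gamma_0(N)$ written as a weighted sum of $G_{k+2}(t,p)$ against Hurwitz--Kronecker class numbers, then reinterpret that class-number sum as a count of elliptic curves over $\F_p$ with the relevant level structure (via Schoof's theorem), and finally identify an explicit one-parameter family whose Frobenius traces sweep out exactly those curves with the correct multiplicities. One point to be careful about in your sketch: the claim that $E_{\l^2}^{\textnormal{Leg}}$ carries a natural $\Gamma_0(8)$-structure is stated a bit loosely. The Legendre curve already has full rational $2$-torsion (a $\Gamma(2)$-structure), and the passage from $\Gamma(2)$ to $\Gamma_0(8)$ via $\l \mapsto \l^2$ needs to be made precise at the level of modular curves rather than argued from an ad hoc factorization of $x-\l^2$; the factorization you wrote does not directly produce a cyclic $8$-isogeny. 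The alternative you suggest, deducing (2) from (1) via degeneracy maps and oldform decomposition, is cleaner in principle but in practice the original references do the direct class-number computation for $N=8$.
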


Both formulas in \Cref{thm:squaretraces} can be rewritten in terms of $H_{p}(\l)$ using the result of Koike. Recently, analogous trace formulas, involving $p$--adic hypergeometric functions, have been established by Pujahari and Saikia \cite{padic} for the groups $\Gamma_{0}(4)$ and $\Gamma_{0}(8)$. Furthermore, new Eichler--Selberg trace formulas, involving Chebyshev polynomials and Hurwitz class numbers, were recently established by Ono and Saad \cite{es} for the groups $\Gamma_{0}(2)$ and $\Gamma_{0}(4)$.

We have observed that, in certain cases, odd moments correspond to trace formulas for modular forms of odd weight and that even moments correspond to trace formulas for modular forms of even weight. For example, formula $(1)$ of \Cref{thm:squaretraces} was recently generalized to include odd-weight modular forms on $\Gamma_{1}(4)$ in the following result. 

\begin{theorem}[Hoffman, Li, Long, and Tu \cite{HLLT}]\label{thm:level4trace}
Let $k \geq 1$ and $p$ be an odd prime. Then the following is true.
$$-\Tr_{k+2}(\Gamma_{1}(4),p) = \frac{1 + (-1)^{k}}{2} + 1 + \phi(-1)^k + \sum_{\l = 2}^{p-1} \sum_{j=0}^{\lfloor \frac{k}{2} \rfloor} (-1)^{j} \binom{k-j}{j} p^{j} H_{p}(\l)^{k-2j}$$
\end{theorem}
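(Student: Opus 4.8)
The statement to prove is the $\Gamma_1(4)$ trace formula of \Cref{thm:level4trace}. The approach I would take is to \emph{deduce} it from the already-stated $\Gamma_0(4)$ trace formula, formula $(1)$ of \Cref{thm:squaretraces}, by accounting for the odd-weight forms and the nontrivial nebentypus characters that distinguish $S_{k+2}(\Gamma_1(4))$ from $S_{k+2}(\Gamma_0(4))$. The first step is to recall the decomposition of the space of cusp forms on $\Gamma_1(4)$ into eigenspaces for the diamond operators indexed by the characters of $(\Z/4\Z)^\times \cong \{\pm 1\}$:
$$S_{k+2}(\Gamma_1(4)) = S_{k+2}(\Gamma_0(4),\varepsilon_0) \oplus S_{k+2}(\Gamma_0(4),\varepsilon_1),$$
where $\varepsilon_0$ is trivial and $\varepsilon_1$ is the nontrivial (odd) character mod $4$. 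Since $T_p$ commutes with the diamond operators, the trace on $\Gamma_1(4)$ splits as the sum of traces on these two pieces; when $k$ is odd only the odd-character piece contributes, which explains the parity factor $\tfrac{1+(-1)^k}{2}$.

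\textbf{Key steps.} First I would invoke the Eichler--Selberg trace formula for $\Gamma_0(4)$ with general nebentypus character $\varepsilon$ to write $\Tr_{k+2}(\Gamma_0(4),\varepsilon,p)$ explicitly; the elliptic, hyperbolic, and parabolic contributions are the standard Gegenbauer/Hurwitz-class-number terms, and the character $\varepsilon$ enters through the twisting factor $\varepsilon(p)$ on the identity and elliptic terms. Summing the $\varepsilon_0$ and $\varepsilon_1$ contributions recovers $\Tr_{k+2}(\Gamma_1(4),p)$. Next, since the $\varepsilon_0$ contribution is precisely the $\Gamma_0(4)$ trace already recorded in formula $(1)$ of \Cref{thm:squaretraces}, I would substitute that known expression and only need to isolate the extra contribution from $\varepsilon_1$. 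Then, using Koike's identity $H_p(\l) = \phi(-1)\,a_\l^{\text{Leg}}(p)$ together with the definition of $G_{k+2}(s,p)$, I would convert the Gegenbauer polynomials $G_{k+2}(a_\l^{\text{Leg}}(p),p)$ into the inner double sum $\sum_{j} (-1)^j \binom{k-j}{j} p^j H_p(\l)^{k-2j}$; the algebraic identity relating $G_{k+2}(s,p)$ to $\sum_j (-1)^j \binom{k-j}{j}p^j s^{k-2j}$ is the same one used implicitly in passing between \Cref{thm:squaretraces} and the moment recursions, so the $\l$-sum takes exactly the stated form. Finally I would collect the constant terms: the $3$ from the $\Gamma_0(4)$ formula combined with the identity and parabolic contributions from $\varepsilon_1$ must be shown to reorganize into $\tfrac{1+(-1)^k}{2} + 1 + \phi(-1)^k$, where the $\phi(-1)^k$ records the elliptic fixed-point term twisted by $\varepsilon_1(p) = \phi(-1)$ raised to the appropriate power.

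\textbf{Main obstacle.} The hard part will be the careful bookkeeping of the boundary terms — specifically, matching the constant $\tfrac{1+(-1)^k}{2} + 1 + \phi(-1)^k$ against the elliptic and parabolic contributions coming from the odd-nebentypus piece $S_{k+2}(\Gamma_0(4),\varepsilon_1)$. The elliptic terms depend on the class numbers $h(-4)$ and $h(-3)$ and on the values $\varepsilon_1$ takes on the relevant orders, and the parabolic (cusp) contribution on $\Gamma_1(4)$ must be computed from the cusps and their widths with the character $\varepsilon_1$ built in; keeping track of which of these survive for each parity of $k$ and reconciling the sign $\phi(-1)^k = \varepsilon_1(-1)^k$ is delicate. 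An alternative, and perhaps cleaner, route would be to follow the method of \cite{HLLT} directly: start from their general hypergeometric trace formula (Theorem 1 of \cite{HLLT}) specialized to the datum $\{\{\frac12,\frac12\},\{1,1\}\}$, whose associated curve is the Legendre family, and read off the $\Gamma_1(4)$ trace together with the exact constant and parity terms. In either approach the analytic content is standard; the entire difficulty is combinatorial accounting of the finitely many exceptional terms.
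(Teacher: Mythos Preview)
The paper does not give a proof of \Cref{thm:level4trace}; it is stated as a result of Hoffman, Li, Long, and Tu \cite{HLLT} and cited without argument, just as \Cref{thm:level3trace} and \Cref{thm:newlevel2} are. There is therefore nothing in the paper to compare your proposal against.

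That said, your alternative route at the end --- specializing the general hypergeometric trace formula of \cite{HLLT} to the datum $\{\{\tfrac12,\tfrac12\},\{1,1\}\}$ --- is precisely how the paper treats this result implicitly, and the paragraph following \Cref{thm:newlevel2} shows how the analogous specialization is carried out for the $\Gamma_0(2)$ case. Your first route, deducing the $\Gamma_1(4)$ formula from Ahlgren's $\Gamma_0(4)$ formula by adding in the odd-nebentypus contribution via Eichler--Selberg, is a legitimate independent approach, but as you yourself note, the bookkeeping of the elliptic and parabolic terms for the nontrivial character is where all the work lies, and you have not actually carried it out. As a proof this remains a plan rather than an argument; the combinatorial matching of the constant $\tfrac{1+(-1)^k}{2}+1+\phi(-1)^k$ to the Eichler--Selberg boundary terms is exactly the content that would need to be verified, and nothing you have written checks it.
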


Note that \Cref{thm:level4trace} can be used to give an alternate proof of \eqref{eq:OSS}.

\subsection{Bounds on Coefficients of Modular Forms}

A key part of our inductive method involves bounding coefficients of modular forms. Let $\tau \in \mathcal{H}$, where $\mathcal{H} = \{\tau \in \mathbb{C} \, | \, \Im(\tau) > 0\}$, and write $q:= e^{2 \pi i \tau}$. The following well-known result of Deligne allows us to obtain bounds for the Fourier coefficients of cusp forms.

\begin{proposition}[Cohen and Stromburg \cite{mf}, Remark 9.3.15.]\label{prop:Deligne}
If $\displaystyle f = \sum_{n \geq 1} a_{n}q^{n}$ is a cusp form of integer weight $k$ for a congruence subgroup, then for all $\varepsilon> 0$ we have $a_{n} = O(n^{\frac{k-1}{2}+\varepsilon})$.
\end{proposition}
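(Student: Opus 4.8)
The plan is to reduce the statement to Deligne's bound on the Hecke eigenvalues of newforms, combined with the elementary divisor estimate $d(n) = O_{\varepsilon}(n^{\varepsilon})$; the only genuinely deep ingredient is the Ramanujan--Petersson bound, which I would quote rather than reprove.

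\emph{Reduction to newforms.} Since $f$ has a $q$-expansion in integer powers of $q$, standard manipulations (enlarging the congruence subgroup and, if necessary, allowing a nebentypus) let us assume $f \in S_{k}(\Gamma_{1}(N))$ for some $N$, without affecting the Fourier coefficients at $\infty$ in a way relevant to a bound of the stated shape. By the Atkin--Lehner--Li theory of newforms, $S_{k}(\Gamma_{1}(N))$ has a basis consisting of the forms $g(d\tau)$ with $g$ a normalized newform of some level $M \mid N$ and $d \mid N/M$. Writing $f$ as a finite $\C$-linear combination of such $g(d\tau)$, its $n$-th coefficient is a sum of boundedly many coefficients $b_{m}$ of newforms $g$ with $m \le n$. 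It therefore suffices to bound the coefficients of a single normalized newform $g = \sum_{n\ge 1} b_{n}q^{n}$, with implied constants allowed to depend on $f$.

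\emph{Multiplicativity and Deligne's bound.} For a newform the $b_{n}$ are multiplicative, and for $p \nmid M$ the local Euler factor has roots $\alpha_{p}, \beta_{p}$ with $\alpha_{p}\beta_{p} = \chi(p)p^{k-1}$ and $\alpha_{p}+\beta_{p} = b_{p}$. Deligne's theorem --- the Ramanujan--Petersson conjecture for holomorphic cusp forms, obtained from his proof of the Weil conjectures applied to the cohomology of the relevant Kuga--Sato variety (and, for $k = 1$, from the Deligne--Serre theorem) --- gives $|\alpha_{p}| = |\beta_{p}| = p^{(k-1)/2}$. Hence
\[
|b_{p^{r}}| = \Bigl| \sum_{i=0}^{r} \alpha_{p}^{\,i}\, \beta_{p}^{\,r-i} \Bigr| \le (r+1)\, p^{r(k-1)/2},
\]
while for $p \mid M$ one has $b_{p^{r}} = b_{p}^{\,r}$ with $|b_{p}| \le p^{(k-1)/2}$, so the same estimate holds. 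Multiplying over the prime-power factorization of $n$ yields $|b_{n}| \le d(n)\, n^{(k-1)/2}$, where $d(n)$ is the number of divisors of $n$.

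\emph{Conclusion.} The classical bound $d(n) = O_{\varepsilon}(n^{\varepsilon})$ then gives $b_{n} = O(n^{(k-1)/2 + \varepsilon})$, and consequently $a_{n} = O(n^{(k-1)/2 + \varepsilon})$ since the $a_{n}$ are bounded sums of the $b_{m}$. The one real obstacle is the Ramanujan--Petersson bound itself; every remaining step is formal, which is why in the paper this proposition is simply cited from \cite{mf} rather than proved.
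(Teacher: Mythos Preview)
Your proposal is a correct sketch of the standard argument, and you even anticipated the situation accurately in your final sentence: the paper does not prove this proposition at all but simply states it with a citation to Cohen--Str\"omberg. There is therefore nothing to compare beyond noting that your outline (reduction to newforms via Atkin--Lehner--Li, Deligne's Ramanujan--Petersson bound at primes, and the divisor estimate $d(n) = O_{\varepsilon}(n^{\varepsilon})$) is exactly the route the cited reference follows.
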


\section{Hypergeometric Character Sums}\label{sec:charsums}

\subsection{Gauss and Jacobi Sums} We begin by recalling various properties of Gauss and Jacobi sums needed to prove \Cref{thm:alg1} and \Cref{thm:alg2}.
A more detailed treatment of Gauss and Jacobi sums can be found in \cite{jacobi, long, ir}.
If $\chi \in \fqhat$ we define 

$$\delta(x) = \begin{cases}
1 &\quad \text{if $x=0$},\\
0 &\quad \text{if $x \neq 0$}
\end{cases} \quad \quad {\delta(\chi) = \begin{cases}
1 &\quad \text{if $\chi = \eps$},\\
0 &\quad \text{if $\chi \neq \eps$}.
\end{cases}}$$

We rely on the following properties of Gauss sums.

\begin{proposition}[\cite{jacobi}]\label{prop:gaussprops}
Let $\chi, \psi \in \fqhat$ and $m \in \N$ with $m \mid q-1$. Then
\\

$(1) \, g(\eps) = -1.$
\\

$(2) \, g(\chi)g(\overline{\chi}) = \chi(-1)q - (q-1) \delta(\chi).$
\\

$(3)$ [The Hasse--Davenport Relation] For any multiplicative character $\psi \in \fqhat$, we have

$$\prod_{\substack{\chi \in \fqhat \\ \chi^{m} = \eps}} g(\chi \psi) = -g(\psi^{m})\psi(m^{-m}) \prod_{\substack{\chi \in \fqhat \\ \chi^{m} = \eps}}g(\chi).$$

\end{proposition}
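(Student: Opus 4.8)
The plan is to prove the three properties in order, each being a standard fact about Gauss sums that follows from direct manipulation of the defining sum $g(\chi) = \sum_{x \in \fq^{\times}} \chi(x)\zeta_p^{\Tr(x)}$. For part (1), with $\chi = \eps$ the trivial character we have $g(\eps) = \sum_{x \in \fq^{\times}} \zeta_p^{\Tr(x)}$; since $\sum_{x \in \fq} \zeta_p^{\Tr(x)} = 0$ (the trace is a surjective $\F_p$-linear map, so each value in $\F_p$ is hit $q/p$ times, and $\sum_{t \in \F_p}\zeta_p^t = 0$), subtracting the $x=0$ term gives $g(\eps) = 0 - 1 = -1$.

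For part (2), I would compute $g(\chi)g(\overline{\chi}) = \sum_{x,y \in \fq^{\times}} \chi(x)\overline{\chi}(y)\zeta_p^{\Tr(x+y)}$ and substitute $x = yt$ so that $\chi(x)\overline\chi(y) = \chi(t)$ and $\Tr(x+y) = \Tr(y(t+1))$. Summing over $y \in \fq^{\times}$ first: when $t \neq -1$ this inner sum is $\sum_{y}\zeta_p^{\Tr(y(t+1))} = -1$ (same argument as part (1), reindexing $y \mapsto y/(t+1)$), and when $t = -1$ it equals $q-1$. Thus $g(\chi)g(\overline\chi) = -\sum_{t \neq -1}\chi(t) + (q-1)\chi(-1)$. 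Using $\sum_{t \in \fq^{\times}}\chi(t) = (q-1)\delta(\chi)$, we get $-\big((q-1)\delta(\chi) - \chi(-1)\big) + (q-1)\chi(-1) = \chi(-1)q - (q-1)\delta(\chi)$, as claimed.

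For part (3), the Hasse--Davenport product relation, I would follow the classical argument (as in the references \cite{jacobi, long, ir}): let $\rho$ be a fixed character of order $m$, so that $\{\chi \in \fqhat : \chi^m = \eps\} = \{\eps, \rho, \ldots, \rho^{m-1}\}$, and consider the function $\psi \mapsto \prod_{i=0}^{m-1} g(\rho^i \psi) \big/ g(\psi^m)$. One shows this is independent of $\psi$ up to the explicit correction factor $\psi(m^{-m})$ by comparing it for $\psi$ and for a twist, exploiting the identity $\prod_{i=0}^{m-1}(1 - \rho^i(x)) $ behavior and the change of variables $x \mapsto x^m$ in Gauss sum integrals; evaluating at $\psi = \eps$ pins down the constant as $-\prod_i g(\rho^i)$ via part (1). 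Since this is a well-known result, I would either cite it directly from \cite{jacobi} or sketch only the normalization step. The main obstacle is part (3): the full self-contained proof of the Hasse--Davenport relation is somewhat involved (it is essentially a statement about how Gauss sums transform under the $m$-th power map on $\fq^{\times}$), so in practice I expect to quote it from the literature rather than reproduce it, with parts (1) and (2) being immediate.
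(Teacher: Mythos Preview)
The paper does not prove this proposition at all; it is stated with a citation to \cite{jacobi} and used as a black box throughout. Your proposal therefore goes beyond what the paper does: your direct arguments for (1) and (2) are correct and standard, and your plan to cite the Hasse--Davenport relation for (3) matches exactly how the paper treats the entire proposition.
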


We rely on the following well--known properties of Jacobi sums and their connection to Gauss sums.

\begin{proposition}[\cite{jacobi,long,ir}]\label{prop:jacobiprops} If $\chi, \psi \in \fqhat$ then the following are true.

$\displaystyle (1) \, J(\chi,\psi) = \frac{g(\chi)g(\psi)}{g(\chi\psi)} + (q-1)\psi(-1) \delta(\chi\psi),$
\\

$(2) \, J(\chi, \eps) = -1 + (q-1)\delta(\chi),$
\\

$(3) \, J(\chi, \overline{\chi}) = -\chi(-1) + (q-1) \delta(\chi),$
\\

$(4) \, J(\chi,\chi) = \overline{\chi}(4) J(\chi, \phi),$
\\

$(5) \, J(\chi, \overline{\psi}) = \chi(-1) J(\chi, \psi \overline{\chi}),$
\\

$(6)$ If $\chi,\psi, \chi\psi \neq \eps$ then $|J(\chi,\psi)| = \sqrt{q}.$
\end{proposition}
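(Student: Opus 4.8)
The plan is to derive all six identities from the definition of $J$ and the Gauss sum facts recorded in \Cref{prop:gaussprops}; each is classical, so I will only pin down the substitutions and flag the delicate points. I would do (2) first, as it is immediate: since $\eps(1-x)$ equals $1$ for $x\neq 1$ and $0$ otherwise, $J(\chi,\eps)=\sum_{x\in\fq^\times}\chi(x)-\chi(1)=(q-1)\delta(\chi)-1$.

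Next I would prove (1), which is the workhorse. The idea is to expand $g(\chi)g(\psi)=\sum_{x,y\in\fq^\times}\chi(x)\psi(y)\zeta_p^{\Tr(x+y)}$ and split the sum according to $s:=x+y$. On $s\neq 0$ I would substitute $x=su$, $y=s(1-u)$ with $u\in\fq\setminus\{0,1\}$, factoring that part of the sum as $\big(\sum_u\chi(u)\psi(1-u)\big)\big(\sum_{s\neq 0}(\chi\psi)(s)\zeta_p^{\Tr(s)}\big)=J(\chi,\psi)\,g(\chi\psi)$; on $s=0$ I would use $y=-x$ to get $\psi(-1)\sum_{x\neq 0}(\chi\psi)(x)=\psi(-1)(q-1)\delta(\chi\psi)$. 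This yields $g(\chi)g(\psi)=J(\chi,\psi)g(\chi\psi)+(q-1)\psi(-1)\delta(\chi\psi)$, and dividing by $g(\chi\psi)$---which is nonzero when $\chi\psi\neq\eps$ by \Cref{prop:gaussprops}(2)---gives the stated formula. When $\chi\psi=\eps$ I would instead verify directly, using \Cref{prop:gaussprops}(1),(2), that the right-hand side of (1) reduces to the value of $J(\chi,\overline\chi)$ computed in (3).

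With (1) in hand, (3) follows by putting $\psi=\overline\chi$ and applying \Cref{prop:gaussprops}(1),(2); as an alternative I would note that for $\chi\neq\eps$ the map $x\mapsto x/(1-x)$ is a bijection $\fq\setminus\{0,1\}\to\fq\setminus\{0,-1\}$ carrying $\chi(x)\overline\chi(1-x)$ to $\chi\big(x/(1-x)\big)$, so $J(\chi,\overline\chi)=\sum_{t\neq 0,-1}\chi(t)=-\chi(-1)$ (the trivial case checked by hand). For (6), I would read it off (1): when $\chi,\psi,\chi\psi$ are all nontrivial the $\delta$-term drops and $J(\chi,\psi)=g(\chi)g(\psi)/g(\chi\psi)$; since $\overline{g(\eta)}=\eta(-1)g(\overline\eta)$ and \Cref{prop:gaussprops}(2) give $|g(\eta)|^2=q$ for every nontrivial $\eta$, taking absolute values yields $|J(\chi,\psi)|=\sqrt q$. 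For (4) I would bypass Gauss sums and complete the square: $J(\chi,\chi)=\sum_{x\in\fq}\chi\big(x(1-x)\big)$ and $x(1-x)=\tfrac14\big(1-(2x-1)^2\big)$, so substituting $y=2x-1$ gives $J(\chi,\chi)=\overline\chi(4)\sum_{y\in\fq}\chi(1-y^2)$; then counting square roots ($v$ has $1+\phi(v)$ of them) rewrites $\sum_y\chi(1-y^2)$ as $\sum_v\chi(1-v)+\sum_v\phi(v)\chi(1-v)=(q-1)\delta(\chi)+J(\chi,\phi)$, which for $\chi\neq\eps$ is just $J(\chi,\phi)$. Finally, for (5) I would use the involution $x\mapsto u/(u-1)$ of $\fq\setminus\{0,1\}$: one computes $\chi\big(u/(u-1)\big)=\chi(-1)\chi(u)\overline\chi(1-u)$ and $1-u/(u-1)=1/(1-u)$, hence $\overline\psi(1-x)=\psi(1-u)$, and substituting into $J(\chi,\overline\psi)=\sum_x\chi(x)\overline\psi(1-x)$ collects exactly $\chi(-1)\sum_u\chi(u)(\psi\overline\chi)(1-u)=\chi(-1)J(\chi,\psi\overline\chi)$.

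I do not expect a real obstacle: the content is entirely classical Gauss--Jacobi sum manipulation. The one place demanding care is the handling of the degenerate characters---correctly producing the term $(q-1)\psi(-1)\delta(\chi\psi)$ in (1), and recognizing that (4) genuinely requires $\chi\neq\eps$---together with checking that the various Jacobi--Gauss conversions stay consistent at those boundary cases. Complete details can be found in \cite{jacobi, ir}.
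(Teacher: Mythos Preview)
The paper does not give its own proof of this proposition; it is stated as a list of classical facts with citations to \cite{jacobi,long,ir}. Your sketch is correct and follows exactly the standard arguments one finds in those references, so there is nothing to compare. One tiny remark: your closing caveat that ``(4) genuinely requires $\chi\neq\eps$'' is overly cautious---if you track your own computation, the extra term $(q-1)\delta(\chi)$ in $\sum_y\chi(1-y^2)$ combines with $J(\eps,\phi)=-1$ to give $q-2=J(\eps,\eps)$, so (4) holds for all $\chi$ as stated.
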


We now recall relevant identities for hypergeometric character sums.

\subsection{Hypergeometric Character Sum Identities}

Consider the hypergeometric datum $\text{HD}_{d}:= \left\{\{1/d,1-1/d\},\{1,1\}\right\}$. The $H_{p}(\text{HD}_{d};\l)$ functions naturally appear in point counts on certain families of elliptic curves when $d \in \{2,3,4,6\}$. Let $\widetilde{E}_{d}(\l)$ denote the family of elliptic curves related to the $H_{p}(\text{HD}_{d};\l)$ functions when $\l \notin \{0,1,\infty\}$. Further, we record the monodromy groups $\Gamma$ associated with each family of elliptic curves $\widetilde{E}_{d}(\l)$. We only record the monodromy groups which are congruence.

\renewcommand{\arraystretch}{1.6}
\begin{table}[h]
\caption{The $\widetilde{E}_{d}(\l)$ Families}
\label{tab:Etfamily}
\begin{tabular}{|c|c|c|}
\hline
$d$ & $\widetilde{E}_{d}(\l)$ & $\Gamma$\\
\hline
$2$ & $y^{2} = x(x-1)(x - \l)$ &  $\Gamma(2)$\\
\hline
$3$ & $y^{2} + xy + \frac{\l}{27}y = x^3$ & $\Gamma_{0}(3)$\\
\hline
$4$ & $y^{2} = x \left(x^{2} + x + \frac{\l}{4} \right)$ & $\Gamma_{0}(2)$\\
\hline
$6$ & $y^{2} + xy = x^{3} - \frac{\l}{432}$ & $-$\\
\hline
\end{tabular}
\end{table}
\renewcommand{\arraystretch}{1}

The $d = 2$ case appears in Koike's result, discussed earlier. We now recall the generalization of Koike's point-counting result to the $d = 3,4,6$ cases for the elliptic curves $\widetilde{E}_{d}(\l)$ given in \Cref{tab:Etfamily} for the convenience of the reader.
\begin{lemma}[\cite{bcm,HLLT}] Suppose $q = p^{e}$ with $p \geq 5$ prime and $\l \in \fq \setminus \{0,1\}$ such that $\widetilde{E}_{d}(\l)$ in \Cref{tab:Etfamily} is an elliptic curve defined over $\fq$. Let $|\widetilde{E}_{d}(\l)(\fq)|$ denote the number of $\fq$ points on the curves $\widetilde{E}_{d}(\l)$ in \Cref{tab:Etfamily}. Then 
\begin{equation}\label{eq:Ed_count}
|\widetilde{E}_{d}(\l)(\fq)| = q+1 - H_{p}\left[\begin{matrix} \frac{1}{d} & 1 - \frac{1}{d}\smallskip \\  &1 \end{matrix} \; ; \; \l \right].
\end{equation}
\end{lemma}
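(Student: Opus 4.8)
\noindent The plan is to handle $d=2$ separately --- it is Koike's theorem recalled in the introduction --- and to obtain the cases $d\in\{3,4,6\}$ uniformly from the point-counting dictionary of Beukers, Cohen, and Mellit combined with an explicit change of Weierstrass model. For $d=2$ one only has to reconcile normalizations, tracking the quadratic twist by $\phi(-1)$ that relates $\widetilde E_{2}(\l)$ to the curve in which Koike phrases his identity; recording this twist once here is convenient, since the same phenomenon produced the quadratic factors $\phi(\l/3),\phi(-3\l),\dots$ in \Cref{thm:alg1}. For $d\in\{3,4,6\}$ the datum $\mathrm{HD}_{d}=\{\{1/d,1-1/d\},\{1,1\}\}$ is defined over $\Q$ and primitive, so \cite{bcm} (Theorem~1.5) attaches to it an explicit affine hypersurface $V_{\l}$ in a torus together with a canonical nonsingular completion $\overline{V_{\l}}$, and expresses $\#\overline{V_{\l}}(\fq)$ in terms of $q$ and $H_{p}(\mathrm{HD}_{d};\l)$ by an explicit formula, the lower-order terms accounting for the passage from $V_{\l}$ to $\overline{V_{\l}}$.

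\noindent Since $\mathrm{HD}_{d}$ has length two, $\overline{V_{\l}}$ is a smooth projective curve; for $\l\notin\{0,1,\infty\}$ --- exactly the parameters allowed in the statement --- I would verify that it has genus $1$ and an $\fq$-rational point, and exhibit an explicit birational transformation over $\fq$ from $\overline{V_{\l}}$ to the Weierstrass curve $\widetilde E_{d}(\l)$ of \Cref{tab:Etfamily}, i.e.\ a rational substitution in the torus coordinates. Birational smooth projective curves over a finite field are isomorphic, so their $\fq$-point counts agree, and the BCM formula then collapses to \eqref{eq:Ed_count}. The hypothesis $p\geq5$ guarantees $p\nmid d$ for all four values of $d$, so $\overline{V_{\l}}$, the substitution, and the function $H_{p}$ are all defined.

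\noindent An alternative, more self-contained route --- the one implicit in \cite{HLLT} --- is a direct character-sum computation. Completing the square (valid for $p\geq5$) one writes $\widetilde E_{d}(\l):y^{2}=c_{d}(x)$ with $c_{d}$ a cubic, so $\#\widetilde E_{d}(\l)(\fq)=q+1+\sum_{x\in\fq}\phi(c_{d}(x))$, and the claim becomes $\sum_{x\in\fq}\phi(c_{d}(x))=-H_{p}(\mathrm{HD}_{d};\l)$. I would expand $H_{q}(\mathrm{HD}_{d};\l)$ from its Gauss-sum definition \eqref{eq:Hqdef}, use the Hasse--Davenport relation and the Gauss--Jacobi identities of \Cref{prop:gaussprops} and \Cref{prop:jacobiprops} to collapse the length-two Gauss-sum product (which, when $p\equiv1\pmod d$, features the order-$d$ character $\rho=\omega^{(q-1)/d}$) into essentially a single Jacobi sum, and then, after Fourier inversion over $\chi=\omega^{k}$, recognize the inner sum as a twisted count of points of $\widetilde E_{d}(\l)$ --- the same Clausen/algebraic-formula bookkeeping that underlies \Cref{thm:alg1}.

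\noindent The main obstacle is the case $p\not\equiv1\pmod d$ for $d\in\{3,4,6\}$: there $\fqhat$ has no character of exact order $d$, the Gauss-sum manipulation above is not directly available, and $H_{q}(\mathrm{HD}_{d};\l)$ exists only by virtue of the $\Q$-rationality of the datum. In the BCM route this is no obstruction, as $\#\overline{V_{\l}}(\fq)$ is defined for every $q$ and their formula is uniform; in the direct route one must instead pass to $\F_{q^{d}}$, perform the computation there, and descend using the $\gal(\F_{q^{d}}/\fq)$-action together with Hasse--Davenport lifting, which is precisely how $H_{q}$ for $\Q$-rational data behaves. A secondary, purely bookkeeping difficulty is tracking the quadratic and higher-order twisting characters throughout, together with the conventions $\chi(0)=0$ and $\overline\chi=\chi^{-1}$.
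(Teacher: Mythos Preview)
The paper does not prove this lemma at all: it is stated with a citation to \cite{bcm,HLLT} and no argument is given. Your proposal is therefore strictly more than what the paper does; your two outlined routes (the BCM toric-variety/point-counting dictionary, and the direct Gauss-sum computation with Hasse--Davenport descent for $p\not\equiv1\pmod d$) are precisely the approaches of the two cited references, and your identification of the $p\not\equiv1\pmod d$ case as the only nontrivial point is accurate.
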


We now recall several relevant identities for the $H_{p}$ function.

\begin{lemma}[Lemma 4 of \cite{HLLT}, Theorem 8.6 of \cite{RSwin}, and Theorem 4.4 of \cite{greene}]\label{lem:2F1transform}
Let $d \in \{2,3,4,6\}$ and $\kappa_{2} = \kappa_{6} = - 1$, $\kappa_{3} = -3, \kappa_{4} = -2$. If $p > 5$ is prime and $\l \in \fp \setminus \{0,1\}$ then the following identities hold.
\begin{equation}\label{eq:2F1transform}
H_{p}\left[\begin{matrix} \frac{1}{d} & 1 - \frac{1}{d}\smallskip \\  &1 \end{matrix} \; ; \; \l \right] = \left(\frac{\kappa_{d}}{p}\right) \, H_{p}\left[\begin{matrix} \frac{1}{d} & 1 - \frac{1}{d}\smallskip \\  &1 \end{matrix} \; ; \; 1 - \l \right],
\end{equation}
\begin{equation}\label{eq:2F1middle}
\text{If} \quad \, \left(\frac{\kappa_{d}}{p}\right) = -1 \quad \text{then} \quad H_{p}\left[\begin{matrix} \frac{1}{d} & 1 - \frac{1}{d}\smallskip \\  &1 \end{matrix} \; ; \; \frac{1}{2} \right] = 0 ,
\end{equation}
\begin{equation}\label{eq:2F1one}
H_{p}\left[\begin{matrix} \frac{1}{d} & 1 - \frac{1}{d}\smallskip \\  &1 \end{matrix} \; ; \; 1 \right] = \left(\frac{\kappa_{d}}{p}\right),
\end{equation}
and
\begin{equation}\label{eq:Greenetransform}
H_{p}(\l) = \phi(\l)H_{p}(1/\l).
\end{equation}
\end{lemma}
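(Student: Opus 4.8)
The plan is to prove the four identities separately, each being an instance of a transformation or evaluation that becomes transparent once $\text{HD}_{d}$ is related to the elliptic family $\widetilde{E}_{d}(\l)$ of \Cref{tab:Etfamily}. For the two twist-type identities \eqref{eq:2F1transform} and \eqref{eq:Greenetransform} I would use the generalized Koike formula \eqref{eq:Ed_count}, which identifies $H_{p}(\text{HD}_{d};\l)=p+1-|\widetilde{E}_{d}(\l)(\fp)|$ with the Frobenius trace of $\widetilde{E}_{d}(\l)$ for $\l\in\fp\setminus\{0,1\}$; for the degenerate evaluation \eqref{eq:2F1one} at $\l=1$ I would return to the Gauss-sum definition \eqref{eq:Hqdef}.

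For \eqref{eq:2F1transform}: working case by case with the Weierstrass models in \Cref{tab:Etfamily}, one checks that $\widetilde{E}_{d}(1-\l)$ is the quadratic twist of $\widetilde{E}_{d}(\l)$ by $\kappa_{d}$ — for $d=2$ this is the substitution $x\mapsto 1-x$, which exhibits $y^{2}=x(x-1)(x-(1-\l))$ as the $(-1)$-twist of $y^{2}=x(x-1)(x-\l)$; the values $\kappa_{3}=-3$, $\kappa_{4}=-2$, $\kappa_{6}=-1$ are read off from the discriminants of the corresponding models. Since a quadratic twist by $\kappa$ multiplies the Frobenius trace by the Legendre symbol $\left(\frac{\kappa}{p}\right)$, \eqref{eq:Ed_count} gives $H_{p}(\text{HD}_{d};1-\l)=\left(\frac{\kappa_{d}}{p}\right)H_{p}(\text{HD}_{d};\l)$, hence \eqref{eq:2F1transform} since $\left(\frac{\kappa_{d}}{p}\right)^{2}=1$. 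This works for every prime $p>5$ with no congruence condition modulo $d$, since the point counts exist for all such $p$. Identity \eqref{eq:2F1middle} is then immediate: setting $\l=\tfrac{1}{2}$ in \eqref{eq:2F1transform} gives $H_{p}(\text{HD}_{d};\tfrac{1}{2})=\left(\frac{\kappa_{d}}{p}\right)H_{p}(\text{HD}_{d};\tfrac{1}{2})$, so when $\left(\frac{\kappa_{d}}{p}\right)=-1$ we get $2H_{p}(\text{HD}_{d};\tfrac{1}{2})=0$, whence $H_{p}(\text{HD}_{d};\tfrac{1}{2})=0$ as $p$ is odd. For \eqref{eq:Greenetransform} I run the same argument in the $d=2$ case with $x\mapsto x/\l$, which identifies $\widetilde{E}_{2}(1/\l)$ with the quadratic twist of $\widetilde{E}_{2}(\l)$ by $\l$; thus $H_{p}(1/\l)=\phi(\l)H_{p}(\l)$, and multiplying by $\phi(\l)$ yields $H_{p}(\l)=\phi(\l)H_{p}(1/\l)$ (equivalently, this is Theorem 4.4 of \cite{greene} transcribed through $H_{p}(\l)=-p\cdot{}_{2}F_{1}(\l)_{p}$).

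The remaining identity \eqref{eq:2F1one} is where the curve argument breaks down, since $\widetilde{E}_{d}(1)$ is singular, and it is the main technical point. Here I would evaluate \eqref{eq:Hqdef} directly at $\l=1$: using $b_{1}=b_{2}=1$ and $\omega^{p-1}=\varepsilon$, the twisting character value is $1$, the factors $g(\omega^{-k-(p-1)b_{i}})/g(\omega^{-(p-1)b_{i}})$ simplify via $g(\varepsilon)=-1$, and the sum collapses to a short expression in Gauss sums of the order-$d$ character $\omega^{(p-1)/d}$; then $g(\chi)g(\overline{\chi})=\chi(-1)p$ (\Cref{prop:gaussprops}(2)) together with the Hasse--Davenport relation (\Cref{prop:gaussprops}(3)) reduces it to $\left(\frac{\kappa_{d}}{p}\right)$. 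This computation is literal when $p\equiv 1\pmod d$, and it extends to all primes $p>5$ because $\text{HD}_{d}$ is defined over $\Q$ (\cite{bcm}). In practice it is cleanest to cite Lemma 4 of \cite{HLLT} and Theorem 8.6 of \cite{RSwin} for \eqref{eq:2F1transform}, \eqref{eq:2F1middle}, and \eqref{eq:2F1one}, and Theorem 4.4 of \cite{greene} for \eqref{eq:Greenetransform}, writing out only the one-line passage from \eqref{eq:2F1transform} to \eqref{eq:2F1middle}.
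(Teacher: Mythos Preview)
The paper does not supply a proof of this lemma: it is stated with attribution to Lemma~4 of \cite{HLLT}, Theorem~8.6 of \cite{RSwin}, and Theorem~4.4 of \cite{greene}, and is then used as input to \Cref{cor:oddmoments} and the later moment computations. So there is nothing in the paper to compare your argument against beyond the bare citation.

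Your sketch is nonetheless sound and matches how these identities are typically established. The elliptic-curve route via \eqref{eq:Ed_count} is exactly the right way to see \eqref{eq:2F1transform} and \eqref{eq:Greenetransform}: the $d=2$ checks you wrote out are correct, and for $d=3,4,6$ the twist computations are routine (though ``read off from the discriminants'' undersells the bookkeeping for the non-Weierstrass models in \Cref{tab:Etfamily}; if you were writing this up in full you would want to exhibit the explicit change of variables in each case, or simply defer to \cite{HLLT}). The one-line deduction of \eqref{eq:2F1middle} from \eqref{eq:2F1transform} is complete as stated. For \eqref{eq:2F1one} your plan is correct in outline, and your closing remark---that in practice one just cites the three references and records the passage from \eqref{eq:2F1transform} to \eqref{eq:2F1middle}---is precisely what the paper does.
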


An important corollary of \Cref{lem:2F1transform} is determining the odd moment sum for half of the relevant primes in the $d = 4,6$ cases.

\begin{corollary}\label{cor:oddmoments}
If $d \in \{4,6\}$ and $n \geq 0$ then
$$\sum_{\l \in \fp} H_{p}\left[\begin{matrix} \frac{1}{d} & 1 - \frac{1}{d}\smallskip \\  &1 \end{matrix} \; ; \; \l \right]^{2n+1} = 0$$ for primes $p \equiv d + 1 \pmod{2d}$ and $p \equiv 2d - 1 \pmod{2d}$.
\end{corollary}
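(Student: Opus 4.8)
The plan is to observe that for both $d=4$ and $d=6$ the congruence conditions $p \equiv d+1 \pmod{2d}$ and $p \equiv 2d-1 \pmod{2d}$ are exactly the conditions under which $\left(\frac{\kappa_d}{p}\right) = -1$, and then to exploit the $\lambda \mapsto 1-\lambda$ symmetry of \Cref{lem:2F1transform} to pair up the terms of the moment sum so that each pair cancels. Throughout, abbreviate $h(\lambda) := H_p\!\left[\begin{matrix} \frac{1}{d} & 1-\frac{1}{d} \\ & 1 \end{matrix} \; ; \; \lambda \right]$.

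First I would record the arithmetic translation. For $d=4$ we have $\kappa_4 = -2$, and by the supplementary laws of quadratic reciprocity $\left(\frac{-2}{p}\right) = \left(\frac{-1}{p}\right)\left(\frac{2}{p}\right) = -1$ exactly when $p \equiv 5$ or $7 \pmod 8$; since here $2d = 8$, $d+1 = 5$, and $2d-1 = 7$, this is precisely the stated hypothesis. For $d = 6$ we have $\kappa_6 = -1$, and $\left(\frac{-1}{p}\right) = -1$ exactly when $p \equiv 3 \pmod 4$, i.e.\ (reading residues mod $12$) when $p \equiv 7$ or $11 \pmod{12}$; since here $2d = 12$, $d + 1 = 7$, and $2d-1 = 11$, this again matches. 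Hence in all cases covered by the corollary we may assume $\left(\frac{\kappa_d}{p}\right) = -1$.

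Under this assumption, \eqref{eq:2F1transform} becomes $h(1-\lambda) = -h(\lambda)$ for all $\lambda \in \fp \setminus \{0,1\}$, while \eqref{eq:2F1middle} gives $h(1/2) = 0$. The involution $\lambda \mapsto 1-\lambda$ partitions $\fp$ into the fixed point $\{1/2\}$, the orbit $\{0,1\}$, and two-element orbits $\{\lambda, 1-\lambda\}$ with $\lambda \notin \{0, \tfrac12, 1\}$. On $\{1/2\}$ the summand $h(1/2)^{2n+1}$ vanishes by \eqref{eq:2F1middle}. On $\{0,1\}$ the summand is $h(0)^{2n+1} + h(1)^{2n+1} = 1^{2n+1} + \left(\frac{\kappa_d}{p}\right)^{2n+1} = 1 + (-1) = 0$, using the convention $H_p(\,\cdot\,;0)=1$ and \eqref{eq:2F1one}. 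On each remaining orbit $\{\lambda, 1-\lambda\}$ the summand is $h(\lambda)^{2n+1} + h(1-\lambda)^{2n+1} = h(\lambda)^{2n+1} + \bigl(-h(\lambda)\bigr)^{2n+1} = 0$ since $2n+1$ is odd. Summing over all orbits yields $\sum_{\lambda \in \fp} h(\lambda)^{2n+1} = 0$.

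I do not expect a substantial obstacle: the corollary is essentially a bookkeeping consequence of \Cref{lem:2F1transform}. The two points meriting care are (i) verifying the equivalence between the listed residue classes and the condition $\left(\frac{\kappa_d}{p}\right) = -1$ via the supplementary reciprocity laws, and (ii) handling the three exceptional arguments $\lambda \in \{0, \tfrac12, 1\}$ separately, since \eqref{eq:2F1transform} is asserted only for $\lambda \in \fp \setminus \{0,1\}$ and one must instead invoke the convention $H_p(\,\cdot\,;0) = 1$ together with \eqref{eq:2F1one} at $\lambda = 1$. One should also keep in mind the hypothesis $p > 5$ in \Cref{lem:2F1transform}, so the statement is understood for all sufficiently large such primes, which is all the moment applications require.
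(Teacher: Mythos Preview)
Your proof is correct and follows essentially the same approach as the paper: both arguments pair the terms under the involution $\lambda \mapsto 1-\lambda$ using \Cref{lem:2F1transform}, treating $\{0,1\}$ and $\{1/2\}$ separately. Your write-up is in fact a bit more thorough, since you explicitly verify via the supplementary reciprocity laws that the stated residue classes are precisely those with $\left(\frac{\kappa_d}{p}\right)=-1$, a step the paper leaves implicit.
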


\begin{proof} Consider the $(\l,1-\l)$ pairs in $\fp^{2}$. If $\l \notin \left\{0,1,\frac{1}{2}\right\}$ then 
$$H_{p}\left[\begin{matrix} \frac{1}{d} & 1 - \frac{1}{d}\smallskip \\  & 1 \end{matrix} \; ; \; \l \right] = - H_{p}\left[\begin{matrix} \frac{1}{d} & 1 - \frac{1}{d}\smallskip \\  & 1 \end{matrix} \; ; \; 1 - \l \right],$$ 
$$H_{p}\left[\begin{matrix} \frac{1}{d} & 1 - \frac{1}{d}\smallskip \\  & 1 \end{matrix} \; ; \; 0 \right] + H_{p}\left[\begin{matrix} \frac{1}{d} & 1 - \frac{1}{d}\smallskip \\  & 1 \end{matrix} \; ; \; 1 \right] = 0,$$
and
$$H_{p}\left[\begin{matrix} \frac{1}{d} & 1 - \frac{1}{d}\smallskip \\  & 1 \end{matrix} \; ; \; \frac{1}{2} \right] = 0$$ by \Cref{lem:2F1transform}. Therefore,
\begin{equation*}
\begin{split}
\sum_{\l \in \fp} H_{p}\left[\begin{matrix} \frac{1}{d} & 1 - \frac{1}{d}\smallskip \\  & 1 \end{matrix} \; ; \; \l \right]^{2n+1} &= \sum_{(\l,1-\l) \in \fp^{2}} H_{p}\left[\begin{matrix} \frac{1}{d} & 1 - \frac{1}{d}\smallskip \\  & 1 \end{matrix} \; ; \; \l \right]^{2n+1}\\
&= 0.
\end{split}
\end{equation*}
\end{proof}

The sums we consider for the moments can be indexed over all of $\fp$ since $H_{p}(0) = 1$ and $H_{p}(1)$ is $\pm 1$, by assumption and \Cref{lem:2F1transform}. Hence, the terms at $\l = 0,1$ vanish after normalizing by the appropriate power of $p$ and taking the limit over primes $p$. We now proceed to prove the moments.

\section{The Level 3 Case}

In this section, we prove \Cref{thm:main3}. Let 

$$c(k) = 1 + \phi(-3)^{k} - \delta_{3,k}(p).$$

If $p \equiv 1 \pmod{3}$ then $J(\eta_{3}, \eta_{3}) = O(\sqrt{p})$, by formula $(6)$ of \Cref{prop:jacobiprops} so $c(k) = O(p^{\frac{k}{2}})$. Now suppose $p \equiv 2 \pmod{3}$. If $k$ is even then $c(k) = O(p^{\frac{k}{2}})$ and if $k$ is odd then $c(k) = 0$ by \Cref{thm:level3trace}. The normalizing factor for the moments is $p^{-\frac{k}{2}-1}$ so the normalized $c(k)$ term will vanish in the limit over primes $p$ for all cases.

\subsection{The Odd Moments}

We now determine the odd moments. Letting $k=2r+1$ in \Cref{thm:level3trace} yields
\begin{equation*}
\begin{split}
-\Tr_{2r+3}(\Gamma_{1}(3),p) &= c(2r+1) + \sum_{j=0}^{r} (-1)^{j} \binom{(2r+1)-j}{j} p^{j} \sum_{\l=2}^{p-1} H_{p}\left[\begin{matrix} \frac{1}{3} & \frac{2}{3} \smallskip \\  &1\end{matrix} \; ; \; \l \right]^{2(r-j)+1}\\
&= c(2r+1) + \sum_{\l=2}^{p-1} H_{p}\left[\begin{matrix} \frac{1}{3} & \frac{2}{3} \smallskip \\  &1\end{matrix} \; ; \; \l \right]^{2r+1}\\
&+ \sum_{j=1}^{r} (-1)^{j} \binom{(2r+1)-j}{j} p^{j} \sum_{\l=2}^{p-1} H_{p}\left[\begin{matrix} \frac{1}{3} & \frac{2}{3} \smallskip \\  &1\end{matrix} \; ; \; \l \right]^{2(r-j)+1}.
\end{split}
\end{equation*}

Then

\begin{equation}\label{eq:3odd}
\begin{split}
\sum_{\l=2}^{p-1} H_{p}\left[\begin{matrix} \frac{1}{3} & \frac{2}{3} \smallskip \\  &1\end{matrix} \; ; \; \l \right]^{2r+1}
&= -c(2r+1) - \Tr_{2r+3}(\Gamma_{1}(3),p)  \\
&-\sum_{j=1}^{r} (-1)^{j} \binom{(2r+1)-j}{j} p^{j} \sum_{\l=2}^{p-1} H_{p}\left[\begin{matrix} \frac{1}{3} & \frac{2}{3} \smallskip \\  &1\end{matrix} \; ; \; \l \right]^{2(r-j)+1}.
\end{split}
\end{equation}

We proceed by induction. The base case $r = 0$ follows easily since $$ \sum_{\l = 2}^{p-1} H_{p}\left[\begin{matrix} \frac{1}{3} & \frac{2}{3} \smallskip \\  &1\end{matrix} \; ; \; \l \right] = O(1)$$ using the orthogonality of characters and the definition of the $H_{p}$ function from \eqref{eq:Hqdef}.  Now consider the inductive step. Normalizing \eqref{eq:3odd} gives

$$ \frac{1}{p^{r+\frac{3}{2}}} \sum_{\l \in \fp} H_{p}\left[\begin{matrix} \frac{1}{3} & \frac{2}{3} \smallskip \\  &1\end{matrix} \; ; \; \l \right]^{2r+1}$$

$$= O \bigg(\frac{1}{\sqrt{p}}\bigg) - \sum_{j=1}^{r} (-1)^{j} \binom{(2r+1)-j}{j} \Bigg(\frac{1}{p^{r-j+\frac{3}{2}}} \sum_{\l \in \fp} H_{p}\left[\begin{matrix} \frac{1}{3} & \frac{2}{3} \smallskip \\  &1\end{matrix} \; ; \; \l \right]^{2(r-j)+1} \Bigg).$$

Taking limits yields

$$\lim_{p \to \infty} \frac{1}{p^{r+\frac{3}{2}}} \sum_{\l \in \fp} H_{p}\left[\begin{matrix} \frac{1}{3} & \frac{2}{3} \smallskip \\  &1\end{matrix} \; ; \; \l \right]^{2r+1}$$

$$= - \sum_{j=1}^{r} (-1)^{j} \binom{(2r+1)-j}{j} \Bigg( \lim_{p \to \infty} \frac{1}{p^{r-j+\frac{3}{2}}} \sum_{\l \in \fp} H_{p}\left[\begin{matrix} \frac{1}{3} & \frac{2}{3} \smallskip \\  &1\end{matrix} \; ; \; \l \right]^{2(r-j)+1} \Bigg) = 0$$

since 

$$\lim_{p \to \infty} \frac{1}{p^{r-j+\frac{3}{2}}} \sum_{\l \in \fp} H_{p}\left[\begin{matrix} \frac{1}{3} & \frac{2}{3} \smallskip \\  &1\end{matrix} \; ; \; \l \right]^{2(r-j)+1} = 0$$

for each $1 \leq j \leq r$, by the inductive hypothesis.

\subsection{The Even Moments}

We now show the even moments. In our cases, the inductive step is realized as a special case of the following result of Chu.

\begin{lemma}[Chu \cite{cat}, Corollary 2]\label{lem:chu}
Let $m,n \in \mathbb{N} \cup \{0\}$. Then

$$ \sum_{j=0}^{\lfloor \frac{m}{2} \rfloor} (-1)^{j} \binom{m - j}{j} C(n-j) = \begin{cases} 
      \binom{2n-m}{n} &\quad \text{if} \quad m > 2n\\
      \\
      \binom{2n-m}{n} \cdot \frac{m+1}{n+1} &\quad \text{if} \quad m \leq 2n. \\
   \end{cases}$$
\end{lemma}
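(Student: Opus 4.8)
The plan is to prove the identity by generating functions, writing both sides as coefficients of a single power series in an auxiliary variable $w$. Let $c(w) := \sum_{n \geq 0} C(n)\, w^n = \frac{1 - \sqrt{1-4w}}{2w}$ be the Catalan generating function, so that $c(w) = 1 + w\, c(w)^2$, and for a fixed $m$ put $a_m(w) := \sum_{j=0}^{\lfloor m/2 \rfloor} (-1)^j \binom{m-j}{j}\, w^j$. Since $C(n-j) = 0$ when $j > n$, the Cauchy product gives
$$\sum_{n \geq 0} \left( \sum_{j=0}^{\lfloor m/2 \rfloor} (-1)^j \binom{m-j}{j}\, C(n-j) \right) w^n \;=\; a_m(w)\, c(w),$$
so the left-hand side of the lemma is exactly $[w^n]\big( a_m(w)\, c(w) \big)$, and the whole problem becomes a coefficient extraction.

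Next I would produce a closed form for $a_m(w)$. The ordinary generating function in $m$ is the standard Chebyshev-type series $\sum_{m \geq 0} a_m(w)\, t^m = \frac{1}{1 - t + w t^2}$, which one checks by expanding the geometric series (or via the recursion $a_m = a_{m-1} - w\, a_{m-2}$, $a_0 = a_1 = 1$). The denominator factors as $1 - t + w t^2 = w\,(t - c(w))(t - \tilde c(w))$, where $c(w)$ and $\tilde c(w) := \frac{1 + \sqrt{1-4w}}{2w}$ are its two roots in $t$; note that $c(w)\,\tilde c(w) = 1/w$, that $\tilde c(w) - c(w) = \sqrt{1-4w}/w$, and that $w\,\tilde c(w) = 1/c(w)$ (the last from $c = 1 + wc^2$). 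A partial-fraction decomposition followed by extracting $[t^m]$ gives $a_m(w) = w^m \cdot \frac{\tilde c(w)^{m+1} - c(w)^{m+1}}{\tilde c(w) - c(w)}$. Multiplying through by $c(w)$ and repeatedly using $c(w)\,\tilde c(w) = 1/w$ and $w\,\tilde c(w) = 1/c(w)$ collapses this to
$$a_m(w)\, c(w) \;=\; \frac{c(w)^{-m}}{\sqrt{1-4w}} \;-\; \frac{w^{m+1}\, c(w)^{m+2}}{\sqrt{1-4w}}.$$

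The final ingredient is the identity $\dfrac{c(w)^k}{\sqrt{1-4w}} = \sum_{n \geq 0} \binom{2n+k}{n}\, w^n$, valid for every integer $k$ under the convention $\binom{r}{n} = \frac{r(r-1)\cdots(r-n+1)}{n!}$ (so it vanishes for $0 \leq r < n$). For $k \geq 0$ this is classical; to push it down to $k \leq 0$ I would combine Pascal's rule $\binom{2n+k-1}{n} = \binom{2n+k}{n} - \binom{2n+k-1}{n-1}$ with $1 - w\,c(w) = 1/c(w)$, which yields $\sum_n \binom{2n+k-1}{n} w^n = \frac{c(w)^k}{\sqrt{1-4w}}\,(1 - w c(w)) = \frac{c(w)^{k-1}}{\sqrt{1-4w}}$, so downward induction from $k = 0$ finishes it. Applying this with $k = -m$ to the first term above and shifting the summation index by $m+1$ in the second, the coefficient extraction from the first paragraph gives
$$\sum_{j=0}^{\lfloor m/2 \rfloor} (-1)^j \binom{m-j}{j}\, C(n-j) \;=\; \binom{2n-m}{n} - \binom{2n-m}{n-m-1}.$$
When $m > 2n$ one has $n - m - 1 < 0$, so the second term vanishes and the sum is $\binom{2n-m}{n}$. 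When $m \leq 2n$ one has $2n - m \geq 0$; writing $\binom{2n-m}{n} = \binom{2n-m}{n-m}$ and using $\binom{N}{j} - \binom{N}{j-1} = \binom{N}{j} \cdot \frac{N - 2j + 1}{N - j + 1}$ with $N = 2n - m$ and $j = n - m$ turns the right-hand side into $\binom{2n-m}{n} \cdot \frac{m+1}{n+1}$, which is the claim.

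The step I expect to require the most care is establishing $\frac{c(w)^k}{\sqrt{1-4w}} = \sum_n \binom{2n+k}{n} w^n$ for negative $k$, and, relatedly, tracking binomial coefficients with negative or out-of-range arguments; this bookkeeping is precisely what produces the two-case shape of the answer, so the conventions must be pinned down carefully. Everything else — the Cauchy product, the Chebyshev generating function, the partial fractions, and the final binomial manipulation — is routine. Alternatively, one may simply invoke Corollary~2 of \cite{cat} verbatim.
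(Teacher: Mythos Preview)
Your generating-function argument is correct and complete. The paper itself does not prove this lemma: it is simply quoted as Corollary~2 of Chu~\cite{cat} and invoked as a black box in the inductive step for the even moments. What you have written is a self-contained proof --- via the Chebyshev-type closed form for $a_m(w)$, partial fractions over the two Catalan roots $c$ and $\tilde c$, and the coefficient identity $c(w)^k/\sqrt{1-4w} = \sum_{n\ge 0} \binom{2n+k}{n} w^n$ --- of a result the paper takes for granted. The delicate point you flag, namely extending that last identity to negative $k$ by downward induction using $1 - w\,c(w) = 1/c(w)$ together with Pascal's rule, is indeed where the care is needed (note the step actually uses the identity at both $k$ and $k+1$ to reach $k-1$, so one starts the descent from the two known nonnegative cases $k=0,1$); your treatment handles it correctly, and the resulting two-term expression $\binom{2n-m}{n} - \binom{2n-m}{n-m-1}$ specializes to both cases of the statement exactly as you describe.
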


Letting $k = 2r$ in \Cref{thm:level3trace} gives

\begin{equation*}
\begin{split}
&-\Tr_{2r+2}(\Gamma_{1}(3),p)\\
&= c(2r) + \sum_{\l=2}^{p-1} H_{p}\left[\begin{matrix} \frac{1}{3} & \frac{2}{3} \smallskip \\  &1\end{matrix} \; ; \; \l \right]^{2r} + \sum_{j=1}^{r} (-1)^{j} \binom{2r - j}{j} p^{j} \sum_{\l=2}^{p-1} H_{p}\left[\begin{matrix} \frac{1}{3} & \frac{2}{3} \smallskip \\  &1\end{matrix} \; ; \; \l \right]^{2(r - j)}
\end{split}
\end{equation*}

so

\begin{equation*}
\begin{split}
&\sum_{\l=2}^{p-1} H_{p}\left[\begin{matrix} \frac{1}{3} & \frac{2}{3} \smallskip \\  &1\end{matrix} \; ; \; \l \right]^{2r}\\
&= -\Tr_{2r +2}(\Gamma_{1}(3),p) - c(2r) - \sum_{j=1}^{r} (-1)^{j} \binom{2r - j}{j} p^{j} \sum_{\l=2}^{p-1} H_{p}\left[\begin{matrix} \frac{1}{3} & \frac{2}{3} \smallskip \\  &1\end{matrix} \; ; \; \l \right]^{2(r - j)}.
\end{split}
\end{equation*}

After normalization we have

\begin{equation}\label{eq:3evennormalized}
\begin{split}
&\frac{1}{p^{r + 1}} \sum_{\l \in \fp} H_{p}\left[\begin{matrix} \frac{1}{3} & \frac{2}{3} \smallskip \\  &1\end{matrix} \; ; \; \l \right]^{2r}\\
&= O \bigg( \frac{1}{\sqrt{p}} \bigg) -  \sum_{j=1}^{r} (-1)^{j} \binom{2r - j}{j} \Bigg( \frac{1}{p^{r -j +1}} \sum_{\l \in \fp} H_{p}\left[\begin{matrix} \frac{1}{3} & \frac{2}{3} \smallskip \\  &1\end{matrix} \; ; \; \l \right]^{2(r - j)} \Bigg).
\end{split}
\end{equation}

Now consider the even moments. We apply induction on $r$. The base case is $r = 1$ which gives 
$$\lim_{p \to \infty} \frac{1}{p^2} \sum_{\l \in \fp} H_{p}\left[\begin{matrix} \frac{1}{3} & \frac{2}{3} \smallskip \\  &1\end{matrix} \; ; \; \l \right]^{2} = 1 = C(1)$$
by taking the limit of \eqref{eq:3evennormalized} with $r = 1$. Suppose the result holds for all $1 \leq k \leq r$. Now taking the limit of \eqref{eq:3evennormalized} for $r = k+1$ and using the inductive hypothesis gives

 \begin{equation*}
 \begin{split}
 &\lim_{p \to \infty} \frac{1}{p^{k+2}} \sum_{\l \in \fp} H_{p}\left[\begin{matrix} \frac{1}{3} & \frac{2}{3} \smallskip \\  &1\end{matrix} \; ; \; \l \right]^{2k+2}\\
 &= - \sum_{j=1}^{k+1} (-1)^{j} \binom{(2k+2)-j}{j} \bigg(\lim_{p \to \infty} \frac{1}{p^{k-j+2}} \sum_{\l \in \fp} H_{p}\left[\begin{matrix} \frac{1}{3} & \frac{2}{3} \smallskip \\  &1\end{matrix} \; ; \; \l \right]^{2(k-j+1)} \bigg)\\
 &= - \sum_{j=1}^{k+1} (-1)^{j} \binom{(2k+2)-j}{j} C(k+1-j).
 \end{split}
 \end{equation*}

 Now letting $m = 2k + 2$ and $n = k + 1$ in \Cref{lem:chu} gives

 $$-\sum_{j=1}^{k+1} (-1)^{j} \binom{(2k+2)-j}{j} C(k+1-j) = C(k+1).$$

 Therefore,

$$\lim_{p \to \infty} \frac{1}{p^{k+2}} \sum_{\l \in \fp} H_{p}\left[\begin{matrix} \frac{1}{3} & \frac{2}{3} \smallskip \\  &1\end{matrix} \; ; \; \l \right]^{2k+2} = C(k+1).$$

\section{The Level 2 Case}

In this section, we prove \Cref{thm:main4}. It is clear that $\delta_{4,2r}(p) = O(p^r)$, in \Cref{thm:newlevel2}, since $J(\eta_{4}, \eta_{4}) = O(\sqrt{p})$. Therefore,

\begin{equation*}
    \begin{split}
        &-\Tr_{2r+2}(\Gamma_{0}(2),p)\\
        &= O(p^r) + \sum_{\l=2}^{p-2} H_{p}\left[\begin{matrix} \frac{1}{4} & \frac{3}{4}\smallskip \\  &1 \end{matrix} \; ; \; \l \right]^{2r} + \sum_{j=1}^{r} (-1)^{j} \binom{2r-j}{j} p^{j} \sum_{\l=2}^{p-2} H_{p}\left[\begin{matrix} \frac{1}{4} & \frac{3}{4}\smallskip \\  &1 \end{matrix} \; ; \; \l \right]^{2(r-j)}
    \end{split}
\end{equation*}

so 

\begin{equation*}
\begin{split}
&\sum_{\l=2}^{p-2} H_{p}\left[\begin{matrix} \frac{1}{4} & \frac{3}{4}\smallskip \\  &1 \end{matrix} \; ; \; \l \right]^{2r}\\
&= -\Tr_{2r+2}(\Gamma_{0}(2),p) - O(p^r) - \sum_{j=1}^{r} (-1)^{j} \binom{2r-j}{j} p^{j} \sum_{\l=2}^{p-2} H_{p}\left[\begin{matrix} \frac{1}{4} & \frac{3}{4}\smallskip \\  &1 \end{matrix} \; ; \; \l \right]^{2(r-j)}\\
&= O(p^{r+ \frac{1}{2}}) - \sum_{j=1}^{r} (-1)^{j} \binom{2r-j}{j} p^{j} \sum_{\l=2}^{p-2} H_{p}\left[\begin{matrix} \frac{1}{4} & \frac{3}{4}\smallskip \\  &1 \end{matrix} \; ; \; \l \right]^{2(r-j)},
\end{split}
\end{equation*}

by \Cref{prop:Deligne}. Normalizing gives 

\begin{equation}
\begin{split}
\frac{1}{p^{r+1}} \sum_{\l=2}^{p-2} H_{p}\left[\begin{matrix} \frac{1}{4} & \frac{3}{4}\smallskip \\  &1 \end{matrix} \; ; \; \l \right]^{2r} &= O\bigg( \frac{1}{\sqrt{p}}\bigg) - \sum_{j=1}^{r} (-1)^{j} \binom{2r-j}{j} \frac{1}{p^{r-j+1}} H_{p}\left[\begin{matrix} \frac{1}{4} & \frac{3}{4}\smallskip \\  &1 \end{matrix} \; ; \; \l \right]^{2(r-j)}.
\end{split}
\end{equation}

Applying induction on $r$, with the same inductive step as the proof of \Cref{thm:main3}, gives the even moments. The result for odd exponents in \Cref{thm:main4} is the $d = 4$ case of \Cref{cor:oddmoments}.

\section{The Level 8 Case}

In this section, we prove \Cref{thm:squaremoments}.

\subsection{The Odd Moments}

The odd moments in \Cref{thm:squaremoments} follow from the odd moments in \eqref{eq:OSS} by considering square and non-square values. Let $m = 2r+1$. The odd moments of \eqref{eq:OSS}, in the case $q=p$, state that

$$\lim_{p \to \infty} \frac{1}{p^{r+ \frac{3}{2}}} \sum_{\l \in \mathbb{F}_{p}} H_{p}(\l)^{2r+1} = 0.$$
Using \eqref{eq:Greenetransform} gives
\begin{equation*}
\begin{split}
\sum_{\substack{\l \in \mathbb{F}_{p} \\\phi(\l)=-1}} H_{p}(\l)^{2r+1} &= \sum_{\substack{(\l, \frac{1}{\l}) \in (\mathbb{F}_{p}^{\times})^{2} \\ \phi(\l)=-1}} \bigg[ H_{p}(\l)^{2r+1} + H_{p}\bigg(\frac{1}{\l} \bigg)^{2r+1}\bigg]\\
&= \sum_{\substack{(\l, \frac{1}{\l}) \in (\mathbb{F}_{p}^{\times})^{2} \\ \phi(\l)=-1}} [ H_{p}(\l)^{2r+1} - H_{p}(\l)^{2r+1}]\\
&= 0.
\end{split}
\end{equation*}

Now observe that
$$\sum_{\l \in \fp^{\times}} H_{p}(\l^2)^{2r+1} = 2 \sum_{\substack{\l \in \mathbb{F}_{p}^{\times} \\ \phi(\l)=1}} H_{p}(\l)^{2r+1}$$

so the odd moments in \eqref{eq:OSS} imply
$$\lim_{p \to \infty} \frac{1}{p^{r+ \frac{3}{2}}} \sum_{\l \in \mathbb{F}_{p}} H_{p}(\l^2)^{2r+1} = 0.$$

\subsection{The Even Moments}
Now consider the even moments. We first rewrite the trace formula $(2)$ of \Cref{thm:squaretraces} in terms of the $H_{p}(\l)$ function. Observe that $a_{p}(\l^2) = \phi(-1)H_{p}(\l^2)$, for $\l \in \fp \setminus \{0,1\}$, by Koike's result. Letting $k = 2r$ in formula $(2)$ of \Cref{thm:squaretraces} gives

\begin{equation*}
\begin{split}
-\Tr_{2r+2}(\Gamma_{0}(8),p) &= 4 + \sum_{j=0}^{r} (-1)^{j} \binom{2r-j}{j} p^{j} \sum_{\l=2}^{p-2} H_{p}(\l^2)^{2(r-j)} \\
&= 4 + \sum_{\l=2}^{p-2} H_{p}(\l^2)^{2r} + \sum_{j=1}^{r} (-1)^{j} \binom{2r-j}{j} p^{j} \sum_{\l=2}^{p-2} H_{p}(\l^2)^{2(r-j)}.\\
\end{split}
\end{equation*}
Then
$$\sum_{\l=2}^{p-2} H_{p}(\l^2)^{2r} = -\Tr_{2r+2}(\Gamma_{0}(8),p) -4 - \sum_{j=1}^{r} (-1)^{j} \binom{2r-j}{j} p^{j} \sum_{\l=2}^{p-2} H_{p}(\l^2)^{2(r-j)}$$
so
\begin{equation}\label{eq:normeven}
\frac{1}{p^{r + 1}} \sum_{\l \in \fp} H_{p}(\l^2)^{2r} = O \bigg( \frac{1}{\sqrt{p}} \bigg) -  \sum_{j=1}^{r} (-1)^{j} \binom{2r - j}{j} \bigg[ \frac{1}{p^{r -j +1}} \sum_{\l \in \fp} H_{p}(\l^2)^{2(r - j)} \bigg]. 
\end{equation}

The even moments for \Cref{thm:squaremoments} follow from \eqref{eq:normeven} using the same inductive argument used in the proofs of \Cref{thm:main3} and \Cref{thm:main4}. 

\section{Proofs of \Cref{thm:alg1} and \Cref{thm:alg2}}\label{sec:algproofs}

We now prove \Cref{thm:alg1} and \Cref{thm:alg2}. In this section, every sum over $\chi$ refers to the sum over all $\chi$ in $\fphat$, unless indicated otherwise.
\begin{proof}[The Proof of \Cref{thm:alg1}]
    We show formula $(2)$ as formula $(1)$ follows in a similar way and formula $(3)$ is obtained by letting $\chi \mapsto \phi \chi$ in formula $(2)$. We have
\begin{equation*}
\begin{split}
     H_{p}\left[\begin{matrix} \frac{1}{3} & \frac{2}{3}\smallskip \\  &\frac{1}{2} \end{matrix} \; ; \; \l \right] 
     &= \frac{1}{p-1} \sum_{\chi} \frac{g(\chi^3)g(\overline{\chi^2})}{g(\chi)} \chi \bigg(\frac{4\l}{27}\bigg)\\
     &= \frac{1}{p-1} \sum_{\chi} \bigg[J(\chi^{3}, \overline{\chi^{2}}) - (p-1)\delta(\chi) \bigg] \chi \bigg(\frac{4\l}{27} \bigg)\\
     &= \frac{1}{p-1} \sum_{\chi} \bigg[J(\overline{\chi^{2}}, \overline{\chi}) - (p-1)\delta(\chi) \bigg] \chi \bigg(\frac{4\l}{27} \bigg)\\
     &= \frac{1}{p-1} \sum_{a \in \fp \setminus \{0,1\}} \bigg[\sum_{\chi} \chi \bigg(\frac{4\l}{27a^{2}(1-a)} \bigg) \bigg] - 1 \\
     &= N_{f}(\l,p) - 1,
\end{split}
\end{equation*}
where $N_{f}(\l,p)$ is the number of solutions to $27x^{3}-27x^{2}+4\l = 0$ in $\fp \setminus \{0,1\}$ by \Cref{prop:gaussprops} and \Cref{prop:jacobiprops}. Then letting $x \mapsto -x/3$ gives the result. Now to show formula $(3)$ first observe that
\begin{equation*}
\begin{split}
 H_{p}\left[\begin{matrix} \frac{1}{6} & \frac{5}{6}\smallskip \\  &\frac{1}{2} \end{matrix} \; ; \; \l \right] 
     &= \frac{1}{p-1} \sum_{\chi} \frac{g(\phi \chi^{3})g(\overline{\chi^2})}{g(\phi \chi)} \chi \bigg(\frac{4\l}{27} \bigg).
\end{split}
\end{equation*}
Letting $\chi \mapsto \phi \chi$ gives
\begin{equation*}
\begin{split}
       \frac{1}{p-1} \sum_{\chi} \frac{g(\chi^{3})g(\overline{\chi^2})}{g(\chi)} \phi\chi \bigg(\frac{4\l}{27} \bigg) &= \phi \bigg(\frac{\l}{3} \bigg) \cdot  H_{p}\left[\begin{matrix} \frac{1}{3} & \frac{2}{3}\smallskip \\  &\frac{1}{2} \end{matrix} \; ; \; \l \right]\\
       &= \phi \bigg(\frac{\l}{3} \bigg) (N_{f}(\l,p) - 1).
\end{split}
\end{equation*}
The values at $\l=1$ follow from substitution and computing the discriminant of the polynomial $f_{\l}(x)$.
\end{proof}

\begin{proof}[The Proof of \Cref{thm:alg2}]

 We only show formula $(1)$ as the other two formulas follow similarly. \Cref{prop:gaussprops} and \Cref{prop:jacobiprops} give
\begin{equation*}
\begin{split}
 H_{p}\left[\begin{matrix} \frac{1}{2} & \frac{1}{4} & \frac{3}{4}\smallskip \\  &\frac{1}{3}&\frac{2}{3} \end{matrix} \; ; \; \l \right] 
 &= \frac{1}{p-1} \sum_{\chi} \frac{g(\chi^4)g(\overline{\chi^3})}{g(\chi)} \chi \bigg(-\frac{27\l}{256} \bigg)\\
 &= \frac{1}{p-1} \sum_{\chi} \frac{g(\chi^2)g(\phi \chi^{2})g(\overline{\chi^3})}{g(\phi)g(\chi)} \chi \bigg(-\frac{27\l}{16} \bigg)\\
 &= \frac{\phi(-1)}{p(p-1)} \sum_{\chi} g(\phi \chi) g(\phi \chi^2) g(\overline{\chi^3}) \chi \bigg(-\frac{27\l}{4} \bigg)\\
 &= \frac{\phi(-1)}{p(p-1)} \sum_{\chi} g(\phi \chi) g(\phi \chi^2) \bigg[\frac{\chi(-1)p}{g(\chi^3)} + (p-1) \delta(\chi^3) \bigg] \chi \bigg(- \frac{27\l}{4} \bigg)\\
 &= \frac{\phi(-1)}{p-1} \sum_{\chi} J(\phi \chi, \phi \chi^{2}) \chi \bigg( \frac{27\l}{4} \bigg)\\
 &+ \sum_{\chi} \bigg[-1 + \frac{\phi(-1)}{p} g(\phi \chi)g(\phi \chi^{2})\chi(-1)\bigg] \chi \bigg(\frac{27\l}{4} \bigg) \delta(\chi^3).\\
 \end{split}
 \end{equation*}
 Now letting $\chi \mapsto \phi \chi$ and using formula $(2)$ of \Cref{prop:jacobiprops} gives
 \begin{equation*}
 \begin{split}
 &\frac{\phi(-27 \l)}{p-1} \sum_{\chi} J(\phi \chi^{2},\chi) \chi \bigg(\frac{27\l}{4} \bigg)\\
 &+ \phi(27 \l) \sum_{\chi} \bigg[-1 + \frac{1}{p} g(\chi)g(\phi \chi^2)\chi(-1) \bigg] \chi \bigg(\frac{27\l}{4} \bigg) \delta(\phi \chi^{3})\\
 &= \phi(-3 \l) \sum_{a \in A_{\l,p}} \phi(a)
 \\
 &+ \phi(3 \l) \sum_{\chi \neq \phi} \bigg[-1 + \frac{1}{p} g(\chi)g(\phi \chi^2)\chi(-1) \bigg] \chi \bigg(\frac{27\l}{4} \bigg) \delta(\phi \chi^{3})
\end{split}
\end{equation*}
where $A_{\l,p} = \{a \in \fp \setminus \{0,1\} \, | \, 27\l(a^{3}-a^{2}) + 4 = 0 \}$ for a fixed $\l \in \fp^{\times}$.

Observe that the delta term is only nontrivial when $\chi = \eta_6$ or $\overline{\eta_6}$, where $\eta_{6}$ is an order six character of $\fphat$ that sends the generator to the primitive sixth root of unity $\zeta_{6}$. Therefore, if $p \equiv 1 \,  (\textrm{mod} \, \, 6)$ the delta term is 
$$\phi(3 \l) \bigg(-1 + \frac{1}{p}g(\eta_{6})g(\overline{\eta_{6}})\eta_{6}(-1) \bigg) \bigg[\eta_{6}\bigg(\frac{\l}{108} \bigg) + \overline{\eta_6}\bigg(\frac{\l}{108}\bigg) \bigg] = 0$$
by formula $(2)$ of \Cref{prop:gaussprops}.
\end{proof}

\section{Additional Cases and Numerical Conjectures}\label{sec:conj}

Hypergeometric data of lengths two and three that are defined over $\Q$ and primitive are studied carefully in this section. In particular, we determine all cases that are defined over $\Q$ and primitive when the data has a length of two or three. Then numerical conjectures and some new results are presented.

\subsection{Length Two Data}

In this section, all data will be considered $\textrm{modulo} \, \, \Z$. Let $\p(n)$ denote the Euler totient function. We consider all possible cases of length two hypergeometric data that are defined over $\Q$ and primitive. The $\alpha$ set must have the form $\alpha = \{ \frac{1}{s}, 1 - \frac{1}{s}\}$, where $s=2$ or $\p(s) = 2$. Therefore, the only eligible values of $s$ in the $\alpha$ set are $s=2,3,4,6$.

Now the $\beta$ set has the form $\beta = \{1, \frac{1}{t}\}$, where $\p(t) = 1$. The only two possibilities for $t$ are $t=1$ and $t=2$. Let $\text{HD}(s,t):= \{\{\frac{1}{s}, 1 - \frac{1}{s}\}, \{1, \frac{1}{t}\}\}$. The seven possible cases of length two data that are both defined over $\Q$ and primitive are $\text{HD}(2,1), \text{HD}(3,1), \text{HD}(4,1), \text{HD}(6,1)$, $\text{HD}(3,2)$, $\text{HD}(4,2),$ and $\text{HD}(6,2)$. The moments for the case $\text{HD}(2,1)$ are established by Ono, Saad, and Saikia in \eqref{eq:OSS} and the moments for the cases $\text{HD}(3,1)$ and $\text{HD}(4,1)$ are given in \Cref{thm:main3} and \Cref{thm:main4}, respectively. The remaining case with $t=1$, $\text{HD}(6,1)$, has the same moments, numerically, as the cases $\text{HD}(2,1), \text{HD}(3,1)$ and $\text{HD}(4,1)$.

\begin{conjecture}\label{conj:2F16}
If $m$ is a fixed positive integer then 
$$\lim_{p \to \infty} \frac{1}{p^{(\frac{m}{2}+1)}} \sum_{\l \in \mathbb{F}_{p}} H_{p}\left[\begin{matrix} \frac{1}{6} & \frac{5}{6} \smallskip \\  &1\end{matrix} \; ; \; \l \right]^{m}= \begin{cases}
0 &\quad \text{if m  is odd}\\
C(n) &\quad \text{if m = 2n is even}
\end{cases}$$

\end{conjecture}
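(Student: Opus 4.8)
The plan is to trade the (unavailable) congruence Hecke trace formula for two ingredients: the classical Eichler--Selberg trace formula on $\SL_{2}(\Z)$, and one $\ell$-adic estimate in the style of Deligne. Write $\Phi_{p}(\l):=H_{p}\!\left[\begin{smallmatrix}\frac{1}{6} & \frac{5}{6}\\ & 1\end{smallmatrix}\,;\,\l\right]$, let $a(E)$ denote the Frobenius trace of an elliptic curve $E/\F_{p}$, and assume $p\ge 5$ (only $p\to\infty$ matters).

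\emph{Geometric reduction.} A direct computation of the Weierstrass invariants of $\widetilde{E}_{6}(\l)\colon y^{2}+xy=x^{3}-\l/432$ gives $c_{4}=1$, $c_{6}=2\l-1$, $\Delta=\l(1-\l)/432$, hence $j$-invariant $432/(\l(1-\l))$; for $\l\neq\tfrac12$ the curve $\widetilde{E}_{6}(\l)$ is therefore the quadratic twist by $2\l-1$ of a fixed model $E_{j}$ of that $j$-invariant (say $E_{j}\colon y^{2}=x^{3}-3cx-2c$, $c=j/(j-1728)$; note $E_{j}$ has good reduction for all $j\neq0,1728$). In particular the $\l$-line is the double cover of the $j$-line branched over the two elliptic points $j=0,1728$, which is not a congruence modular curve --- this is exactly the dash in \Cref{tab:Etfamily}, and it is why no Hecke trace formula is at hand. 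By \eqref{eq:Ed_count} one has $\Phi_{p}(\l)=\phi(2\l-1)\,a(E_{432/(\l(1-\l))})$ for $\l\in\F_{p}\setminus\{0,1\}$, so substituting $s=2\l-1$ (using $\phi(s)^{2}=1$, and that each $j\neq0,1728$ with $\phi(j(j-1728))=1$ arises from exactly the pair $\l,1-\l$) collapses the even moment to
$$\sum_{\l\in\F_{p}}\Phi_{p}(\l)^{2n}=\sum_{j\in\F_{p}\setminus\{0,1728\}}a(E_{j})^{2n}+\Delta_{n}(p)+O(p^{n}),\qquad \Delta_{n}(p):=\sum_{j\in\F_{p}\setminus\{0,1728\}}\phi\bigl(j(j-1728)\bigr)\,a(E_{j})^{2n},$$
the $O(p^{n})$ absorbing $\l\in\{0,1,\tfrac12\}$.

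\emph{The full $j$-line sum.} Here the classical trace formula suffices: the Eichler--Selberg trace formula for $\SL_{2}(\Z)$, rewritten via Deuring's mass formula, yields
$$-\Tr_{2n+2}(\SL_{2}(\Z),p)=\sum_{j\in\F_{p}\setminus\{0,1728\}}G_{2n+2}\bigl(a(E_{j}),p\bigr)+O\bigl(p^{\,n+1/2+\varepsilon}\bigr)$$
with $G$ as in \Cref{sec:traces} (the error coming from supersingular $j$ and from $j=0,1728$). Since $\Tr_{2n+2}(\SL_{2}(\Z),p)=O(p^{\,n+1/2+\varepsilon})$ by \Cref{prop:Deligne}, expanding $G_{2n+2}$, dividing by $p^{n+1}$ and running the induction used to prove \Cref{thm:main3} --- whose inductive step is \Cref{lem:chu} with $m=2n$ --- gives $\lim_{p\to\infty}p^{-(n+1)}\sum_{j\neq0,1728}a(E_{j})^{2n}=C(n)$.

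\emph{The defect (the main obstacle).} It remains to prove $\Delta_{n}(p)=o(p^{n+1})$, and this is the step that truly leaves the toolbox of the paper. Let $U=\mathbb{A}^{1}\setminus\{0,1728\}$, let $\mathcal{F}=R^{1}\pi_{*}\overline{\Q}_{\ell}$ for the family $E_{j}$ over $U$ (lisse of rank $2$, pure of weight $1$, tame since $p\ge5$), and let $\mathcal{L}$ be the Kummer sheaf on $U$ with trace function $j\mapsto\phi(j(j-1728))$ (tame, nontrivial). Using the Clebsch--Gordan decomposition $\mathcal{F}^{\otimes2n}\cong\bigoplus_{k=0}^{n}m_{k}\,\mathrm{Sym}^{2n-2k}\mathcal{F}\otimes\overline{\Q}_{\ell}(-k)$ (recall $\wedge^{2}\mathcal{F}\cong\overline{\Q}_{\ell}(-1)$) one gets
$$\Delta_{n}(p)=\sum_{k=0}^{n}m_{k}p^{k}\sum_{j\in U(\F_{p})}\operatorname{tr}\!\bigl(\mathrm{Frob}_{j}\mid(\mathcal{L}\otimes\mathrm{Sym}^{2n-2k}\mathcal{F})_{j}\bigr).$$
By Grothendieck--Lefschetz each inner sum equals $-\operatorname{tr}\bigl(\mathrm{Frob}\mid H^{1}_{c}(U_{\overline{\F}_{p}},\mathcal{L}\otimes\mathrm{Sym}^{2n-2k}\mathcal{F})\bigr)$: here $H^{0}_{c}=0$, and $H^{2}_{c}=0$ because $\mathcal{L}\otimes\mathrm{Sym}^{2n-2k}\mathcal{F}$ has no geometrically trivial quotient --- the geometric monodromy of $\mathcal{F}$ is Zariski dense in $\SL_{2}$ (big monodromy of the universal elliptic curve), so $\mathrm{Sym}^{m}\mathcal{F}$ is geometrically irreducible, and $\mathcal{L}$ is a nontrivial character sheaf. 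By Grothendieck--Ogg--Shafarevich $\dim H^{1}_{c}=2n-2k+1$ (tameness, $\chi_{c}(U)=-1$) and by Deligne's bound its Frobenius eigenvalues have modulus $\le p^{(2n-2k+1)/2}$; hence each inner sum is $O(p^{\,n-k+1/2})$ and $\Delta_{n}(p)=O(p^{\,n+1/2})=o(p^{n+1})$. Together with the previous paragraph this gives the even moments.

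\emph{Odd moments.} For $p\equiv7,11\pmod{12}$ the vanishing of $\sum_{\l}\Phi_{p}(\l)^{2n+1}$ is already \Cref{cor:oddmoments} (case $d=6$). For $p\equiv1\pmod4$ the pairing $\l\leftrightarrow1-\l$ no longer produces a sign, but the substitution $s=2\l-1$ now gives
$$\sum_{\l\in\F_{p}}\Phi_{p}(\l)^{2n+1}=\sum_{s\in\F_{p}}\phi(s)\,a\bigl(E_{1728/(1-s^{2})}\bigr)^{2n+1}+O(p^{n});$$
decomposing $\mathcal{G}^{\otimes(2n+1)}$ into odd symmetric powers (with $\mathcal{G}=R^{1}\pi_{*}\overline{\Q}_{\ell}$ for the corresponding family over the $s$-line) and repeating the $\ell$-adic estimate bounds this by $O(p^{n+1})=o(p^{n+3/2})$. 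Thus $\lim_{p\to\infty}p^{-(n+3/2)}\sum_{\l}\Phi_{p}(\l)^{2n+1}=0$ in all cases, and with the even moments this proves \Cref{conj:2F16}. The genuine difficulty --- the reason this case is only conjectured here --- is the $\ell$-adic estimate for the twisted defect, which substitutes for the congruence trace formula and relies on Deligne's Weil~II bound and the big-monodromy theorem for the universal elliptic curve.
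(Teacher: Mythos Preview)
The paper does not prove \Cref{conj:2F16} --- it is stated as a conjecture, and the only evidence offered is the first moment (\Cref{prop:firstmoment}), the second moment (\Cref{prop:2F1secondmoment}), and the exact vanishing of the odd--power sums for $p\equiv 7,11\pmod{12}$ via \Cref{cor:oddmoments}. Your proposal therefore cannot be ``the same approach as the paper''; if sound, it supplies what the paper lacks.

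Your strategy --- push the $\l$--sum down to the $j$--line, handle the untwisted $j$--sum by the level--$1$ Eichler--Selberg formula together with the paper's Chu/induction mechanism, and kill the quadratic--twist defect $\Delta_n(p)$ by Grothendieck--Lefschetz plus Deligne's Weil~II bound --- is correct in outline and is exactly the kind of $\ell$--adic input the paper's trace--formula method cannot provide for $d=6$. A few points deserve tightening. First, with your model $E_j\colon y^2=x^3-3cx-2c$ one finds $c_4(E_j)=144c$ and $c_6(E_j)=1728c$, so the quadratic twist carrying $E_j$ to $\widetilde{E}_6(\l)$ is by $(2\l-1)/12$ rather than by $2\l-1$; this only replaces $\phi(2\l-1)$ by $\phi(3)\phi(2\l-1)$, which is harmless for both parities. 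Second, your one--line $\SL_2(\Z)$ trace formula in the shape $-\Tr_{2n+2}=\sum_{j}G_{2n+2}(a(E_j),p)+O(p^{\,n+1/2+\varepsilon})$ requires passing through Deuring's correspondence between Hurwitz class numbers and isomorphism classes of elliptic curves; the contributions from $j\in\{0,1728\}$ and from the parabolic/identity terms must be checked to be $o(p^{n+1})$ (they are, but this is not in the paper). Third, for the $H^2_c=0$ step you appeal to ``big monodromy of the universal elliptic curve'', but there is no universal curve over the $j$--line; what you actually need (and have) is that your specific family $E_j$ is non--isotrivial with a fibre of multiplicative reduction at $j=\infty$, which forces Zariski--dense geometric monodromy in $\SL_2$ and hence geometric irreducibility of each $\mathrm{Sym}^m\mathcal{F}$. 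Fourth, for the odd moments the base $\mathbb{A}^1\setminus\{0,\pm1\}$ has $\chi_c=-2$, so $\dim H^1_c=2(m+1)$ rather than $m+1$; your bound $O(p^{n+1})=o(p^{\,n+3/2})$ is unaffected. With these details filled in, your argument proves the conjecture and genuinely extends the paper's framework by substituting an $\ell$--adic equidistribution estimate for the missing congruence trace identity.
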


Note that taking $d = 6$ in \Cref{cor:oddmoments} gives
$$\sum_{\l \in \fp} H_{p}\left[\begin{matrix} \frac{1}{6} & \frac{5}{6} \smallskip \\  &1\end{matrix} \; ; \; \l \right]^{2n+1} = 0$$ for primes $p \equiv 7 \pmod{12}$ and $p \equiv 11 \pmod{12}$ at nonnegative integers $n$. The elliptic curves associated with each non-algebraic case here are listed in \Cref{tab:Etfamily}.
\\

We now provide further evidence for \Cref{conj:2F16} by establishing the first and second moments for the $\text{HD}(6,1)$ case. The first moment follows from the orthogonality of characters and is recorded below.
\begin{proposition}\label{prop:firstmoment}
    Let $\{\alpha, \beta, \l\}$ be a hypergeometric datum which is both defined over $\Q$ and primitive. Define $H_{p}(\alpha, \beta, 0) = 1$. Then

    $$\sum_{\l \in \fp} H_{p}(\alpha, \beta, \l) = 0.$$
\end{proposition}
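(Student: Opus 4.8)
The plan is to read the value directly off the character--sum definition \eqref{eq:Hqdef} of $H_{p}$, handling the $\l=0$ term separately and collapsing the sum over $\l\in\fp^{\times}$ by orthogonality of characters.

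First I would split off the trivial term, writing
\[
\sum_{\l\in\fp}H_{p}(\alpha,\beta,\l)
=H_{p}(\alpha,\beta,0)+\sum_{\l\in\fp^{\times}}H_{p}(\alpha,\beta,\l)
=1+\sum_{\l\in\fp^{\times}}H_{p}(\alpha,\beta,\l),
\]
using the convention $H_{p}(\alpha,\beta,0)=1$. For the remaining sum I would substitute \eqref{eq:Hqdef} with $q=p$ and interchange the two finite summations, one over $k\in\{0,1,\dots,p-2\}$ and one over $\l\in\fp^{\times}$. Since $\l\mapsto(-1)^{n}\l$ is a bijection of $\fp^{\times}$, the inner sum becomes $\sum_{\mu\in\fp^{\times}}\omega^{k}(\mu)$, which by orthogonality equals $p-1$ when $\omega^{k}=\eps$, i.e. $k=0$, and $0$ otherwise.

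Hence only the term $k=0$ survives, and there the product of Gauss--sum ratios in \eqref{eq:Hqdef} telescopes,
\[
\prod_{i=1}^{n}\frac{g(\omega^{(p-1)a_{i}})\,g(\omega^{-(p-1)b_{i}})}{g(\omega^{(p-1)a_{i}})\,g(\omega^{-(p-1)b_{i}})}=1,
\]
so that
\[
\sum_{\l\in\fp^{\times}}H_{p}(\alpha,\beta,\l)=\frac{1}{1-p}\,(p-1)\cdot 1=-1,
\]
and therefore $\sum_{\l\in\fp}H_{p}(\alpha,\beta,\l)=1+(-1)=0$.

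I do not expect a genuine obstacle here; the only point deserving a word of care is that the character--sum expression \eqref{eq:Hqdef} is available as the definition of $H_{p}(\alpha,\beta,\l)$ even when $p\not\equiv 1\pmod{M}$, which is precisely the Beukers--Cohen--Mellit extension \cite{bcm} for data defined over $\Q$. Since that extension retains the same shape---a finite sum over $\chi\in\fphat$ carrying a factor $\chi(c\l)$ for a fixed $c\in\Q^{\times}$, with Gauss--sum ratios equal to $1$ at $\chi=\eps$---the orthogonality step and the evaluation of the surviving $\chi=\eps$ term go through verbatim and again yield $-1$. Primitivity of the datum plays no role in this argument; only its being defined over $\Q$ is used, and only to guarantee that $H_{p}$ is defined for every odd prime $p\nmid M$.
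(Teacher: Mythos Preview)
Your proposal is correct and follows exactly the route the paper indicates: the paper states only that the first moment ``follows from the orthogonality of characters,'' and your argument is the natural execution of that, splitting off the $\l=0$ term and collapsing the sum over $\fp^{\times}$ via $\sum_{\l}\omega^{k}((-1)^{n}\l)=(p-1)\delta(\omega^{k})$ so that only the $k=0$ term, with Gauss--sum ratio equal to $1$, survives.
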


 The orthogonality of characters also allows us to prove the second moment for \Cref{conj:2F16}.

\begin{proposition}\label{prop:2F1secondmoment}
Let $p > 3$ be a prime. Then

$$\sum_{\l \in \fp^{\times}} H_{p}\left[\begin{matrix} \frac{1}{6} & \frac{5}{6} \smallskip \\  &1\end{matrix} \; ; \; \l \right]^{2} = \begin{cases}
p^{2}-3p-1 &\quad \text{if \, $p \equiv 1 \pmod{3}$ }\\
p^{2}-p-1 &\quad \text{if \, $p \equiv 2 \pmod{3}$}
\end{cases}$$
so
$$\lim_{p \to \infty} \frac{1}{p^{2}} \sum_{\l \in \mathbb{F}_{p}} H_{p
}\left[\begin{matrix} \frac{1}{6} & \frac{5}{6} \smallskip \\  &1\end{matrix} \; ; \; \l \right]^{2} = 1.$$
\end{proposition}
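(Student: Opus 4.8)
The plan is to pass from the hypergeometric function to the associated elliptic curve and to evaluate the resulting quadratic character sum by counting points on a conic; this treats all primes $p>3$ uniformly, the dichotomy in the statement emerging only through the value of $\left(\frac{-3}{p}\right)$. First I would invoke the $d=6$ case of \eqref{eq:Ed_count}: for $\l\in\fp\setminus\{0,1\}$ we have $H_p(\text{HD}_6;\l)=p+1-|\widetilde{E}_6(\l)(\fp)|$ with $\widetilde{E}_6(\l):y^2+xy=x^3-\l/432$. Since $p>3$, the substitution $y\mapsto y-x/2$ puts this curve in the form $y^2=u(x)-\l/432$ with $u(x)=x^3+\frac{1}{4}x^2=x^2\!\left(x+\frac{1}{4}\right)$, and counting affine points together with the point at infinity gives $H_p(\text{HD}_6;\l)=-\sum_{x\in\fp}\phi\!\left(u(x)-\frac{\l}{432}\right)$ (the overall sign is irrelevant below, since $H_p$ gets squared). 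A short direct check using \eqref{eq:2F1one} shows this identity persists at $\l=1$, hence holds for all $\l\in\fp^\times$.

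Next I would square and sum. With $\mu=\l/432$ running over $\fp^\times$ as $\l$ does,
\[
\sum_{\l\in\fp^\times}H_p(\text{HD}_6;\l)^2=\sum_{x,y\in\fp}\ \sum_{\mu\in\fp^\times}\phi\big((u(x)-\mu)(u(y)-\mu)\big).
\]
For fixed $x,y$ the inner polynomial is monic quadratic in $\mu$ with discriminant $(u(x)-u(y))^2$, so the standard evaluation of a quadratic character sum gives $\sum_{\mu\in\fp}\phi(\cdots)=-1$ when $u(x)\neq u(y)$ and $p-1$ when $u(x)=u(y)$, and subtracting the $\mu=0$ term $\phi(u(x))\phi(u(y))$ restricts the sum to $\fp^\times$. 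Writing $N:=\#\{(x,y)\in\fp^2:u(x)=u(y)\}$, $N_0:=\#\{(x,y):u(x)=u(y)=0\}$, and $S:=\sum_{x\in\fp}\phi(u(x))$, I would collect the cases: since $u$ has a double root at $0$, $S=\sum_{x\neq 0}\phi\!\left(x+\frac{1}{4}\right)=-\phi\!\left(\frac{1}{4}\right)=-1$; since $\{x:u(x)=0\}=\{0,-\frac{1}{4}\}$ we get $N_0=4$; and using $\sum_{u(x)\neq u(y)}\phi(u(x))\phi(u(y))=S^2-(N-N_0)$, the $N_0$-contributions cancel, leaving $\sum_{\l\in\fp^\times}H_p(\text{HD}_6;\l)^2=pN-p^2-S^2=pN-p^2-1$.

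It remains to compute $N$. From $u(x)-u(y)=(x-y)\big(x^2+xy+y^2+\frac{1}{4}(x+y)\big)$, $N$ equals $p$ (the diagonal) plus the number of off-diagonal zeros of the conic $x^2+xy+y^2+\frac{1}{4}(x+y)=0$. An invertible linear substitution (legitimate since $2$ is a unit mod $p$) brings this conic to $X^2+3Y^2=\frac{1}{12}$, which has precisely $p-\left(\frac{-3}{p}\right)$ points; among these $x=y$ forces $x\in\{0,-\frac{1}{6}\}$, two points, so $N=p+\big(p-\left(\frac{-3}{p}\right)-2\big)=2p-2-\left(\frac{-3}{p}\right)$. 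Hence $\sum_{\l\in\fp^\times}H_p(\text{HD}_6;\l)^2=p^2-2p-1-p\left(\frac{-3}{p}\right)$, which equals $p^2-3p-1$ when $p\equiv1\pmod 3$ and $p^2-p-1$ when $p\equiv2\pmod 3$. Adding the $\l=0$ term (where $H_p(\text{HD}_6;0)=1$), dividing by $p^2$, and letting $p\to\infty$ then gives the limit $C(1)=1$.

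The main obstacle is the bookkeeping in the middle step: tracking exactly which pairs $(x,y)$ land in which case and confirming that the $N_0$-terms cancel, together with verifying that the point-count formula for $H_p$ remains valid at $\l=1$ (handled via \eqref{eq:2F1one}). One also must carefully justify the standard facts that $\#\{X^2+3Y^2=c\}=p-\left(\frac{-3}{p}\right)$ for $c\neq 0$ and that the substitution diagonalizing the conic is invertible mod $p$. A purely character-theoretic alternative is available when $p\equiv1\pmod 3$: expand $H_p(\text{HD}_6;\l)^2$ from \eqref{eq:Hqdef} in terms of an order-six character $\eta_6$, apply orthogonality $\sum_{\l\in\fp^\times}\chi(\l)\psi(\l)=(p-1)\delta(\chi\psi)$ to collapse the double character sum to a single one, and evaluate it with \Cref{prop:gaussprops} and \Cref{prop:jacobiprops}, the only delicate point being the three degenerate characters $\varepsilon,\eta_6,\overline{\eta_6}$ where some Gauss sums reduce to $-1$.
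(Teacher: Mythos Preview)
Your argument is correct and takes a genuinely different route from the paper. The paper proceeds purely character-theoretically: it first rewrites $H_p\left[\begin{smallmatrix}1/6 & 5/6\\ & 1\end{smallmatrix};\lambda\right]$ via Hasse--Davenport as $\frac{1}{1-p}\sum_\chi \frac{g(\chi^6)g(\chi)g(\overline\chi)^2}{g(\chi^2)g(\chi^3)}\chi(\lambda/432)$ (valid for all $p>3$, not just $p\equiv 1\pmod 6$), squares, and applies orthogonality in $\lambda$ to force $\psi=\overline\chi$, collapsing to a single character sum. The identity $g(\chi^k)g(\overline{\chi^k})=\chi^k(-1)p$ for $\chi^k\neq\varepsilon$ then reduces everything to counting which $\chi$ make some $\chi^k$ trivial; the two residue classes mod $3$ arise from whether order-three characters exist. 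Your approach instead passes through the elliptic curve $\widetilde E_6(\lambda)$ via \eqref{eq:Ed_count}, reduces the squared sum to counting pairs with $u(x)=u(y)$, and reads off the answer from the point count on the conic $x^2+xy+y^2+\tfrac14(x+y)=0$; the dichotomy emerges transparently as $\left(\frac{-3}{p}\right)$, the discriminant class of that conic.

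Each method has its advantages. The paper's computation is self-contained within the Gauss-sum formalism and visibly generalizes to the other $_2F_1$ and $_3F_2$ second moments treated in \Cref{sec:conj} (indeed the paper says the $_3F_2$ case ``follows in the same way''). Your argument is more elementary---only quadratic character sums and an affine conic count---but imports the point-count identity \eqref{eq:Ed_count} as a black box and requires the separate verification at $\lambda=1$ (which you handle correctly: the cubic $u(x)-1/432$ acquires the double root $x=-1/6$, and the resulting sum collapses to $\phi(-1)=\left(\frac{\kappa_6}{p}\right)$). Your closing remark about a character-theoretic alternative is essentially the paper's method, though the paper's Hasse--Davenport rewriting avoids any need to restrict to $p\equiv 1\pmod 3$.
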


\begin{proof}
Observe that \Cref{prop:gaussprops} gives

$$H_{p}\left[\begin{matrix} \frac{1}{6} & \frac{5}{6} \smallskip \\  &1\end{matrix} \; ; \; \l \right] = \frac{1}{1-p} \sum_{\chi} \frac{g(\chi^{6})g(\chi)g(\overline{\chi})^{2}}{g(\chi^{2})g(\chi^{3})} \chi \bigg(\frac{\l}{432} \bigg).$$

Then 

\begin{equation*}
\begin{split}
    &\sum_{\l \in \fp^{\times}} H_{p}\left[\begin{matrix} \frac{1}{6} & \frac{5}{6} \smallskip \\  &1\end{matrix} \; ; \; \l \right]^{2}\\
    &= \frac{1}{(1-p)^2} \sum_{\chi} \frac{g(\chi^{6})g(\chi)g(\overline{\chi})^{2}}{g(\chi^{2})g(\chi^{3})} \sum_{\psi} \frac{g(\psi^{6})g(\psi)g(\overline{\psi})^{2}}{g(\psi^{2})g(\psi^{3})} \bigg( \sum_{\l \in \fq^{\times}} \chi \psi(\l) \bigg).
\end{split}
\end{equation*}

Now note that

$$\sum_{\l \in \fp^{\times}} \chi \psi(\l) = \begin{cases}
p -1 &\quad \text{if} \quad \text{$\psi = \overline{\chi}$}\\
0 &\quad \text{if} \quad \text{$\psi \neq \overline{\chi}$}.
\end{cases}$$

Therefore,

\begin{equation*}
\begin{split}
    \sum_{\l \in \fp^{\times}} H_{p}\left[\begin{matrix} \frac{1}{6} & \frac{5}{6} \smallskip \\  &1\end{matrix} \; ; \; \l \right]^{2} &= \frac{1}{p-1} \sum_{\chi} \frac{g(\chi^{6})g(\chi)g(\overline{\chi})^{2}}{g(\chi^{2})g(\chi^{3})}  \frac{g(\overline{\chi^{6}})g(\chi)g(\overline{\chi})^{2}}{g(\overline{\chi^{2}})g(\overline{\chi^{3}})}\\
    &= \frac{1}{p-1} \bigg(p^{2}+1 + \sum_{\chi \neq \eps, \phi} \frac{g(\chi^{6})g(\chi)g(\overline{\chi})^{2}}{g(\chi^{2})g(\chi^{3})}  \frac{g(\overline{\chi^{6}})g(\chi)g(\overline{\chi})^{2}}{g(\overline{\chi^{2}})g(\overline{\chi^{3}})} \bigg).
\end{split}
\end{equation*}

There are now two cases when $p \equiv 1 \pmod{3}$ and $p \equiv 2 \pmod{3}$. If $p \equiv 1 \pmod{3}$ then 

\begin{equation*}
\begin{split}
    \sum_{\l \in \fp^{\times}} H_{p}\left[\begin{matrix} \frac{1}{6} & \frac{5}{6} \smallskip \\  &1\end{matrix} \; ; \; \l \right]^{2} &= \frac{p^{3}-4p^{2}+2p+1}{p-1}\\
    &= p^{2}-3p-1,
\end{split}
\end{equation*}

by \Cref{prop:gaussprops}. If $p \equiv 2 \pmod{3}$ then a similar computation gives
\begin{equation*}
\begin{split}
    \sum_{\l \in \fp^{\times}} H_{p}\left[\begin{matrix} \frac{1}{6} & \frac{5}{6} \smallskip \\  &1\end{matrix} \; ; \; \l \right]^{2} &= \frac{p^{3}-2p^{2}+1}{p-1}\\
    &= p^{2}-3p-1.
\end{split}
\end{equation*}

\end{proof}

The remaining three cases of length two data, the $t=2$ cases, correspond to algebraic hypergeometric functions, discussed in Chapter 3 of \cite{long}. The algebraic formulas for these cases are given in \Cref{thm:alg1}. The even moments for these cases $\text{HD}(3,2), \text{HD}(4,2), \text{HD}(6,2)$ do not appear to match with a known sequence in OEIS. However, numerical data suggests the odd moments are zero in these three cases.

\subsection{Length Three Data}

We now give a similar analysis for the relevant cases of length three data. In particular, we find there are thirteen cases of hypergeometric data in length three which are defined over $\Q$ and primitive. It is well known that Euler's totient function $\p(n)$ is even for $n > 2$ so the $\alpha$ set must have the following form $\alpha = \{\frac{1}{2}, \frac{1}{s}, 1 - \frac{1}{s}\}$ for $s = 2,3,4$ and $6$, following the discussion for the length two data. The possibilities for the $\beta$ set are a subset of the possibilities for the $\alpha$ set in the case of length two data. The cases in which $\beta = \{1, \frac{1}{2}, \frac{1}{2}\}$ and $\beta = \{1, \frac{1}{s}, 1 - \frac{1}{s}\}$ are excluded to ensure the hypergeometric datum is primitive.

Let $\text{HD}(2,s,t):= \{ \{\frac{1}{2}, \frac{1}{s}, 1 - \frac{1}{s}\}, \{1, \frac{1}{t}, 1 - \frac{1}{t}\}\}$, where all values are considered $\text{mod} \, \, \Z$. The thirteen possible cases of length three data that are both defined over $\Q$ and primitive are $\text{HD}(2, 2, 1)$, $\text{HD}(2,3,1)$, $\text{HD}(2,4,1)$, $ \text{HD}(2,6,1)$, $\text{HD}(2, 2, 3)$, $ \text{HD}(2,2,4)$, $\text{HD}(2,2,6)$, $\text{HD}(2,3,4)$, $\text{HD}(2,3,6)$, $\text{HD}(2,4,3)$, $\text{HD}(2,4,6)$, $\text{HD}(2,6,3)$, and $\text{HD}(2,6,4)$. The three cases $\text{HD}(2,4,3)$, $\text{HD}(2,6,3)$, and $\text{HD}(2,6,4)$ are algebraic. The corresponding algebraic formulas are in \Cref{thm:alg2}.

All ten remaining cases are not algebraic. The moments for the first case, $\text{HD}(2,2,1)$, were determined by Ono, Saad, and Saikia in \cite{oss}. We observe the remaining nine non-algebraic cases have the same moments as the $\text{HD}(2,2,1)$ case, numerically. 

\begin{conjecture}\label{conj:3F2} If $m$ is a fixed positive integer then

 $$\lim_{p \to + \infty} \frac{1}{p^{{m+1}}} \sum_{\l \in \mathbb{F}_{p}} H_{p}\left[\begin{matrix} \frac{1}{2} & \frac{1}{s} & 1 - \frac{1}{s} \smallskip \\  &\frac{1}{t}&1-\frac{1}{t}\end{matrix} \; ; \; \l \right]^{m}$$
 
$$= \begin{cases} 
      0 &\quad \text{if m is odd}\\
      \displaystyle \sum_{i=0}^{m} (-1)^{i} \binom{m}{i} C(i) &\quad \text{if m is even} \\
   \end{cases}
$$

 for $(s,t) \in \{(3,1), (4,1), (6,1)\}$ and

 $$\lim_{p \to + \infty} \frac{1}{p} \sum_{\l \in \mathbb{F}_{p}} H_{p}\left[\begin{matrix} \frac{1}{2} & \frac{1}{s} & 1 - \frac{1}{s} \smallskip \\  &\frac{1}{t}&1-\frac{1}{t}\end{matrix} \; ; \; \l \right]^{m}$$
 
$$= \begin{cases} 
      0 \hspace{50mm} \text{if m is odd}\\
      \displaystyle \sum_{i=0}^{m} (-1)^{i} \binom{m}{i} C(i) \hspace{18mm} \text{if m is even} \\
   \end{cases}
$$

 for $(s,t) \in \{(2,3), (2,4), (2,6), (3,4), (3,6), (4,6)\}$.
\end{conjecture}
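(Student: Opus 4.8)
The plan is to run the same trace-formula-and-induction machinery that proves \Cref{thm:main3}, \Cref{thm:main4}, and \Cref{thm:squaremoments}, now attached to each of the ten non-algebraic length-three data, supplemented by a Clausen-type transfer from the already-established length-two moments. For each datum $\mathrm{HD}(2,s,t)$ one must first identify the associated geometric family (the length-three analog of the Clausen curve $E^{\mathrm{Cl}}_\lambda$ and of \Cref{tab:Etfamily}) and the congruence subgroup $\Gamma_{s,t}$ carrying its monodromy, then establish a Hecke trace formula
\[
-\Tr_{k+2}(\Gamma_{s,t},p) \;=\; (\text{bounded correction}) \;+\; \sum_{\lambda}\, P_{k}\!\left(H_{p}\!\left[\begin{matrix}\tfrac12&\tfrac1s&1-\tfrac1s\smallskip\\&\tfrac1t&1-\tfrac1t\end{matrix};\lambda\right],\,p\right),
\]
with $P_k$ an explicit polynomial. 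Such a formula is exactly what the general construction of Hoffman--Li--Long--Tu \cite{HLLT} produces once one inserts the point-count identity (the length-three analog of \eqref{eq:Ed_count}) and rewrites the output using the Clausen-type product formulas of \cite{long} (cf. the derivation of \Cref{thm:newlevel2}). Note the two regimes visible already in \Cref{conj:3F2}: the six $t=1$ data should be governed by rank-three (weight-two) motives, matching the normalization $p^{-(m+1)}$, whereas the normalization $p^{-1}$ in the $t\neq1$ cases signals a strictly lower motivic weight and hence a structurally different situation.

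For the $t=1$ cases I would proceed exactly as in \Cref{thm:main3}: solve the trace formula for $\sum_\lambda H_p[\cdots;\lambda]^{2r}$, observe that the correction term is dominated by powers of Jacobi sums and that $\Tr_{k+2}(\Gamma_{s,t},p)=O(p^{(k+1)/2+\varepsilon})$ by \Cref{prop:Deligne}, so both are negligible after dividing by $p^{r+1}$, and pass to the limit to obtain a linear recursion on the moment limits $L_r$. Substituting the Clausen-type relation between $H_p[\mathrm{HD}(2,s,1);\lambda]$ and the underlying $_2F_1$ datum $\mathrm{HD}(s,1)$ and expanding the binomials, this recursion expresses $\sum_\lambda H_p[\cdots;\lambda]^{2r}$ as a binomial combination of the $_2F_1$ even moments $\sum_\mu H_p[\mathrm{HD}(s,1);\mu]^{2i}$, which converge to $C(i)$ by \Cref{thm:main3} ($s=3$), \Cref{thm:main4} ($s=4$), and --- conditionally --- \Cref{conj:2F16} ($s=6$). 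Binomial inversion then gives $L_{2n}=\sum_{i=0}^{2n}(-1)^{i}\binom{2n}{i}C(i)$. Alternatively one avoids the Clausen substitution and solves the recursion directly: it will have the shape $\sum_{i}(-1)^{i}\binom{2r-i}{i}L_{r-i}=0$ up to corrections coming from the degree-three local factor, and one must verify a combinatorial identity extending Chu's (\Cref{lem:chu}, \cite{cat}) confirming that $\sum_i(-1)^i\binom{2n}{i}C(i)$ is its unique solution with the correct base case.

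The odd moments should vanish for a reason of parity rather than via a trace formula. I would look for a transformation $H_p[\mathrm{HD}(2,s,t);\lambda]=\eta_p\,H_p[\mathrm{HD}(2,s,t);\sigma(\lambda)]$ for an involution $\sigma$ of $\mathbb{P}^1\setminus\{0,1,\infty\}$, generalizing \eqref{eq:2F1transform} and the argument of \Cref{cor:oddmoments}, with $\eta_p=-1$ on a positive proportion of primes; pairing $\lambda$ with $\sigma(\lambda)$ then kills the odd-moment sum exactly for those primes, so its normalized limit is $0$ along them. Proving the limit is $0$ over \emph{all} primes would, following the pattern noted just before \Cref{thm:level4trace}, require an odd-weight Hecke trace formula for $\Gamma_{s,t}$ (a $\Gamma_1$-type refinement) together with its own induction.

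The main obstacle is twofold. First, the explicit trace formulas --- and the Clausen-type transformations --- in the ``polynomial in $H_p$'' form needed here are not available in the literature for all ten data; supplying them requires carrying the \cite{HLLT} reduction through in each case (possibly proving new hypergeometric identities along the way) and checking that the correction terms really are of lower order. Second, the $t\neq1$ cases are genuinely different: the normalization $p^{-1}$, independent of $m$, forces $H_p[\mathrm{HD}(2,s,t);\lambda]=O(1)$, so the governing cohomology has small weight and is not a space of weight-$(k+2)$ cusp forms in the same way; a direct character-sum analysis (as in the proofs of \Cref{thm:alg1} and \Cref{thm:alg2}), sensitive to CM and Eisenstein contributions, is likely needed instead, and one must rule out that such contributions perturb the limit. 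I would therefore expect the six $t=1$ cases to fall first, modulo \Cref{conj:2F16} for $s=6$, with the $t\neq1$ cases and the unconditional odd-moment statement forming the real bottleneck.
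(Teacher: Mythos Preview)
The statement you are attempting is \Cref{conj:3F2}, which in the paper is a \emph{conjecture}, not a theorem: the paper does not prove it. The only evidence the paper supplies is the first moment (via \Cref{prop:firstmoment}, orthogonality of characters) and the second moment for the three $t=1$ cases (the unnamed proposition following \Cref{conj:3F2}, proved exactly as \Cref{prop:2F1secondmoment}). So there is no paper proof to compare your proposal against.

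Your proposed strategy is the natural extrapolation of the paper's methodology for \Cref{thm:main3}, \Cref{thm:main4}, and \Cref{thm:squaremoments}, and you correctly identify the genuine obstructions: the required trace formulas of the shape $-\Tr_{k+2}(\Gamma_{s,t},p) = (\text{bounded}) + \sum_\lambda P_k(H_p,p)$ are not established in \cite{HLLT} or elsewhere for these ten data, and the $t\neq 1$ cases have $H_p = O(1)$ (as you note from the normalization $p^{-1}$), so they are not governed by cusp-form traces in the same way and the inductive mechanism breaks down. Your own closing paragraph is an accurate assessment: what you have written is a plausible research plan, not a proof, and the paper's author evidently reached the same conclusion, which is why the statement remains a conjecture. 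One further caution: your proposed reduction of the $(s,1)$ even moments to the $_2F_1$ moments via a Clausen-type substitution would, even if the transformation exists, not obviously yield the claimed alternating-binomial combination $\sum_i (-1)^i \binom{m}{i} C(i)$ without an additional combinatorial identity beyond \Cref{lem:chu}; the paper gives no hint that such an identity is known.
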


The discrepancy in the normalizing factors of \Cref{conj:3F2} is due to the zig-zag diagram $Z(HD)$ associated with each hypergeometric datum. See \cite{hgm} for background on zig-zag diagrams for hypergeometric data that is defined over $\Q$. If $w$ is the weight of $Z(HD)$ and $\text{min(Z)}$ is the associated minimum value then work of Katz \cite{Katzexp} shows 
$$H_{p}(\text{HD};\l) = O\left(p^{\frac{w-1}{2}+\text{min(Z)}}\right)$$ for any hypergeometric datum $\text{HD}$. Now observe that \text{min(Z) = 0} for the first three cases of \Cref{conj:3F2}, when $(s,t) \in \left\{(3,1),(4,1),(6,1)\right\}$. However, \text{min(Z) = $-1$} for the six remaining cases in \Cref{conj:3F2}.

Now the first moment for \Cref{conj:3F2} is zero by \Cref{prop:firstmoment}. We provide additional evidence for \Cref{conj:3F2} by showing the second moment in the three $t=1$ cases.

\begin{proposition}
Let $p > 3$ be a prime and fix $d \in \{3,4,6\}$. Then

$$\sum_{\l \in \fp^{\times}} H_{p}\left[\begin{matrix} \frac{1}{2} & \frac{1}{d} & 1 - \frac{1}{d} \smallskip \\  &1&1\end{matrix} \; ; \; \l \right]^{2} = 
\begin{cases} 
      p^{3}-4p^{2}-p-1 &\quad \text{if \, $p \equiv 1 \pmod{d}$}\\
      p^{3}-2p^{2}-p-1 &\quad \text{if \, $p \equiv -1 \pmod{d}$} \\
   \end{cases}$$
Therefore,

$$\lim_{p \to \infty} \frac{1}{p^{3}} \sum_{\l \in \mathbb{F}_{p}} H_{p}\left[\begin{matrix} \frac{1}{2} & \frac{1}{d} & 1 - \frac{1}{d} \smallskip \\  &1&1\end{matrix} \; ; \; \l \right]^{2} = 1.$$
\end{proposition}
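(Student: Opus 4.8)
The plan is to follow the template of the proof of \Cref{prop:2F1secondmoment}, now carrying three pairs of Gauss sums instead of two. First I would record a clean single--character--sum expansion of the hypergeometric function. Starting from McCarthy's definition \eqref{eq:Hqdef} with $\alpha=\{\tfrac12,\tfrac1d,1-\tfrac1d\}$ and $\beta=\{1,1,1\}$, one gets
\[
H_{p}\left[\begin{matrix} \frac{1}{2} & \frac{1}{d} & 1 - \frac{1}{d}\smallskip \\  &1&1\end{matrix} \; ; \; \l \right]
= \frac{1}{1-p}\sum_{\chi}\frac{-\,g(\phi\chi)\,g(\eta_{d}\chi)\,g(\overline{\eta_{d}}\chi)\,g(\overline{\chi})^{3}}{g(\phi)\,g(\eta_{d})\,g(\overline{\eta_{d}})}\,\chi(-\l),
\]
where $\eta_{d}$ has exact order $d$. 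Applying the Hasse--Davenport relation (part $(3)$ of \Cref{prop:gaussprops}) with $m=2$ to $g(\phi\chi)$ and with $m=d$ to $g(\chi)g(\eta_{d}\chi)g(\overline{\eta_{d}}\chi)$ rewrites this as $\frac{1}{1-p}\sum_{\chi}R_{d}(\chi)\,\chi(c_{d}\l)$, where $c_{d}\in\fpc$ is an explicit constant and $R_{d}(\chi)$ is a ratio of Gauss sums involving only $\chi,\overline{\chi}$ and $\chi^{2},\chi^{3},\chi^{6}$; for instance $R_{3}(\chi)=-g(\chi^{2})g(\chi^{3})g(\overline{\chi})^{3}/g(\chi)^{2}$. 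The point of this rewriting is that the $\eta_{d}$--free form is valid for every odd prime $p$, which is exactly what puts the case $p\equiv -1\pmod d$ on the same footing as $p\equiv 1\pmod d$.

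Second I would square, expand the double sum over $\chi$ and $\psi$, and use $\sum_{\l\in\fpc}(\chi\psi)(\l)=(p-1)\,\delta(\psi=\overline{\chi})$ --- just as in \Cref{prop:2F1secondmoment} --- to collapse the left-hand side to
\[
\sum_{\l\in\fpc} H_{p}\left[\begin{matrix} \frac{1}{2} & \frac{1}{d} & 1 - \frac{1}{d}\smallskip \\  &1&1\end{matrix} \; ; \; \l \right]^{2}
= \frac{1}{p-1}\sum_{\chi}R_{d}(\chi)R_{d}(\overline{\chi}).
\]
Using $g(\mu)g(\overline{\mu})=\mu(-1)p$ for $\mu\ne\eps$ and $g(\eps)=-1$ (parts $(1)$ and $(2)$ of \Cref{prop:gaussprops}), a short computation shows $R_{d}(\chi)R_{d}(\overline{\chi})=p^{3}$ for every $\chi$ outside the finite ``bad'' set of characters for which one of $\chi,\chi^{2},\chi^{3},\chi^{6}$ is trivial. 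Hence the generic part of the sum is $\tfrac{1}{p-1}(p-1)p^{3}=p^{3}$, and the proposition comes down to showing that the bad characters contribute $-4p^{2}-p-1$ when $p\equiv 1\pmod d$ and $-2p^{2}-p-1$ when $p\equiv -1\pmod d$; dividing by $p^{3}$ and letting $p\to\infty$ then gives the stated limit $1$.

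The main obstacle is the bookkeeping in this last step, and it is precisely there that the two congruence classes separate. When $p\equiv -1\pmod d$ one has $\gcd(d,p-1)\mid 2$, so $\chi^{d}=\eps$ (and $\chi^{3}=\eps$ for $d\in\{3,6\}$, or $\chi^{4}=\eps$ for $d=4$) forces $\chi\in\{\eps,\phi\}$; then $R_{d}(\eps)=\pm 1$ and $R_{d}(\phi)^{2}=p^{2}$, so the bad contribution is $R_{d}(\eps)^{2}+R_{d}(\phi)^{2}=1+p^{2}$ and $\tfrac{1}{p-1}\bigl((p-3)p^{3}+p^{2}+1\bigr)=p^{3}-2p^{2}-p-1$. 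When $p\equiv 1\pmod d$ the genuine order-$d$ characters $\eta_{d},\overline{\eta_{d}}$ also lie in $\fphat$; a direct computation gives $R_{d}(\eta_{d})R_{d}(\overline{\eta_{d}})=p^{2}$ rather than $p^{3}$, and (every remaining character still contributing $p^{3}$ as before) trading these two terms against two generic ones changes the sum by $\tfrac{1}{p-1}(2p^{2}-2p^{3})=-2p^{2}$, which produces the extra $-2p^{2}$ and hence $p^{3}-4p^{2}-p-1$. Evaluating $R_{d}(\chi)R_{d}(\overline{\chi})$ at each bad $\chi$ requires a bit of care --- e.g.\ resolving factors where $\chi^{2}$ or $\chi^{3}$ is trivial but $\chi$ is not, using $g(\phi)^{2}=\phi(-1)p$ and one more application of Hasse--Davenport --- but it is routine once set up, and running through $d=3,4,6$ in turn yields the two closed forms in the statement.
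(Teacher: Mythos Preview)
Your proposal is correct and follows precisely the template the paper intends: the paper's own proof is the single sentence ``The proof follows in the same way as the proof of \Cref{prop:2F1secondmoment},'' and your write-up is a careful execution of exactly that argument (rewrite $H_p$ in an $\eta_d$-free Gauss-sum form via Hasse--Davenport, square, collapse by orthogonality, and evaluate the finitely many bad characters). The bookkeeping you outline for the bad set checks out in each of $d=3,4,6$, including the fact that for $d=6$ the characters $\eta_3,\overline{\eta_3}$ still contribute the generic value $p^3$ while $\eta_6,\overline{\eta_6}$ drop to $p^2$, which is what produces the extra $-2p^{2}$.
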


\begin{proof}
The proof follows in the same way as the proof of \Cref{prop:2F1secondmoment}. 
\end{proof}
A similar argument can be used to establish the second moment for the six remaining cases of \Cref{conj:3F2}. The values of the even moments for length three data have a geometric meaning as the even traces of the matrices in the real orthogonal group $O_{3}$. That is, for $m$ even
$$\int_{O_{3}} (\Tr X)^{m} dX = \sum_{i=0}^{m} (-1)^{i} \binom{m}{i} C(i),$$
where the integral is with respect to the Haar measure on $O_{3}$.

The corresponding distribution was shown to be the so-called Batman distribution in \cite{oss}. This Batman distribution was made more explicit in the work of Saad \cite{saad}. Recently, Pujahari and Saikia have discovered the $O_{3}$ distribution appears for a few cases of $p$-adic hypergeometric functions \cite{padic2}. Further, Huang, Ono, and Saad \cite{matrix} showed the $O_{3}$ distribution also appears when counting matrix points on certain K3 surfaces. The hypergeometric differential equations for these length three defined over $\Q$ cases also appear in string theory \cite{HGk3} when studying fibrations of $K3$ surfaces. 

\bibliographystyle{plain}
\bibliography{ref2}

\end{document}